\newtheorem{theorem}{Theorem}[section]
\newtheorem{lemma}[theorem]{Lemma}
\newtheorem{corollary}[theorem]{Corollary}
\newtheorem{proposition}[theorem]{Proposition}
\theoremstyle{definition}
\newtheorem{definition}[theorem]{Definition}
\newtheorem{remark}[theorem]{Remark}
\newtheorem{problem}[theorem]{Problem}
\newcommand{\remove}[1]{}
\newcommand{\luk}{\L u\-k\-a\-s\-ie\-wicz}
\DeclareMathOperator{\ide}{{\rm id}_{\interval}}
\DeclareMathOperator{\McNone}{{\mathcal M}([0,1])}
\DeclareMathOperator{\interval}{[0,1]}
\DeclareMathOperator{\prim}{\rm Prim}
\DeclareMathOperator{\lex}{+_{\rm lex}}
\DeclareMathOperator{\gen}{\rm gen}
\title[Word problems in Elliott involutive monoids]
{Word problems in  Elliott  monoids}
\author[D. Mundici]{Daniele Mundici$^\dag$}
\address[D. Mundici]{Department of Mathematics
and Computer Science ``Ulisse Dini'',
University of Florence, 
viale Morgagni 67/A,
50134 Florence,  Italy}
\email{mundici@math.unifi.it}
\keywords{AF algebra, word problem,  Elliott classification, 
 Effros-Shen algebra, Behnke-Leptin algebras,
Farey AF algebra,   
Elliott local semigroup, Murray-von Neumann order, 
$K_0$, MV  algebra, word problem,
 polynomial time, G{\"odel} incompleteness.}
 \subjclass[2000]{Primary:   46L80, 47L30.
Secondary:   03D40,  06B25, 
06D35,  06F20,  08A50,   20F10, 47L40,
68Q17, 68Q19].}
 \date{\today}
\begin{document}

  \maketitle
  
   \hfill{\it to Saunders Mac Lane, in memoriam}

\begin{abstract} 
Algorithmic issues  concerning Elliott local 
semigroups  are 
seldom considered in the literature,
although  these combinatorial 
structures completely classify 
 AF algebras. In general, the addition  operation of 
an Elliott local
semigroup is {\it partial}, but  for every AF algebra 
$\mathfrak B$  whose  Murray-von Neumann order
of projections is a lattice,   
this operation is  uniquely extendible
to the addition of an involutive  monoid $E(\mathfrak B)$.
Let $\mathfrak M_1$ be the
Farey  AF algebra
 introduced by the present author
  in 1988 and rediscovered by F. Boca in 2008.
The freeness properties of the involutive monoid
$E(\mathfrak M_1)$ yield
a natural  word problem for every AF algebra  $\mathfrak B$ 
with singly generated $E(\mathfrak B)$, because 
$\mathfrak B$ is automatically a quotient of $\mathfrak M_1$.
 Given two formulas  $\phi$ and $\psi$ 
 in the language of involutive
 monoids, the problem asks to decide whether
 $\phi$ and $\psi$   code the same 
  equivalence of projections of $\mathfrak B$.
This mimics the classical definition 
of the word problem of a group presented  by 
 generators and relations.   
We  show   that the
word problem  of  
$\mathfrak M_1$  is solvable in polynomial time,
and so is the word problem of the
Behnke-Leptin algebras $\mathcal A_{n,k}$, 
and of the Effros-Shen algebras  
$\mathfrak F_{\theta}$, for
 $\theta\in \interval\setminus \mathbb Q$ 
a real algebraic
number, or $\theta = 1/e$. 
 We construct a  quotient of $\mathfrak M_1$
 having a  G\"odel incomplete
  word problem, and show that  no
  primitive
  quotient of $\mathfrak M_1$ is
  G\"odel incomplete.
\end{abstract}

\section {Foreword}
%
For quantum  physical systems  $\mathcal S$ 
with   infinitely many degrees of freedom, 
von Neumann's uniqueness theorem does not hold
in general, and  {\it the}
Hilbert space of  $\mathcal S$ does not exist.  
%
Infinite  systems
typically occur in quantum statistical mechanics,
\cite{sew}. 
Glimm's  UHF  algebras
 yield the standard tool for the algebraization of
  quantum spin systems  \cite{bra, brarob, emc, sew}.
For instance, the Canonical Anticommutation
Relation (CAR) algebra provides a
mathematical description of the ideal Fermi gas.
Eventually,  the following general definition
emerged:
An   {\it AF  algebra}  $\mathfrak B$ is the
norm closure of the union of an ascending sequence of finite
dimensional  C*-algebras,   all with the same unit,
 \cite{bra, eff, goo-shiva}.

When the Murray-von Neumann order of projections in
$\mathfrak B$  is a lattice we say that $\mathfrak B$ 
is an  {\it  $AF\ell$ algebra. }

Quite a few  classes of AF algebras existing in the literature
turn out to be AF$\ell$ algebras, well beyond the most 
elementary   examples
 given by commutative AF algebras and finite
dimensional C$^*$-algebras.
An  interesting class
of AF$\ell$ algebras is given by 
AF algebras with
comparability of projections (in the sense of
Murray-von Neumann). This includes 
the CAR algebra and, more generally,
 all of Glimm's UHF algebras, \cite{eff, brarob}.
AF algebras with
comparability of projections also include the
Effros-Shen C*-algebras   
$\mathfrak F_{\theta}$ for irrational $\theta\in\interval$, 
\cite[p.65]{eff}, which play  an
interesting role in topological dynamics,
\cite{BMT87, bratteli1, bratteli2, pimvoi}.
Also the  Behncke-Leptin
C*-algebras $\mathcal A_{m,n}$ with a two-point dual
\cite{behlep} have comparability of projections.  
Other examples of AF$\ell$ algebras are given by
continuous trace   AF algebras,  
 liminary C*-algebras with boolean 
primitive spectrum, \cite{cigellmun},  (see 
\cite{mun-padova}  for an interesting particular case),
and  AF algebras with a directed set of finite
dimensional *-subalgebras, \cite{laz1,laz2}.

The  ``universal''
  AF algebra $\mathfrak M$
of  \cite[\S 8]{mun-jfa} is an AF$\ell$ algebra:
every  AF algebra  with comparability
of projections is a quotient of $\mathfrak M$
by a   primitive essential ideal,
 \cite[Corollary 8.7]{mun-jfa}. Also,  every 
 (possibly non-unital) AF algebra may be embedded into
a quotient of $\mathfrak M$, \cite[Remark 8.9]{mun-jfa}.
In this paper we will consider the 
Farey  AF$\ell$  algebra $\mathfrak M_{1}$
introduced in 
 \cite{mun-adv}. By \cite{pimvoi}, 
 every irrational rotation C$^*$-algebra 
 is embeddable into some (Effros-Shen) simple quotient of  
 $\mathfrak M_1, $ \cite[Theorem 3.1 (ii)]{mun-adv}.
$\mathfrak M_1$, in turn,
  is embeddable into
  Glimm's universal UHF algebra,
   \cite[Theorem 1.5]{mun-milan}.
 As shown in \cite{mun-lincei, mun-milan}, 
 the AF algebra $\mathfrak A$ more recently
 considered by   Boca \cite{boc}  coincides with 
$\mathfrak M_1$. For the many interesting properties and  
 applications of $\mathfrak M_1$  see 
 \cite{agu, boc, eck,
 mun-adv,  mun-lincei, 
	mun-milan}.

As we will see in Theorem  \ref{theorem:multi},
a main property of every AF$\ell$ algebra  $\mathfrak B$ is that
the partial addition in its Elliott local semigroup 
is canonically extendible to the addition of
 an involutive  monoid $E(\mathfrak B)$.
We are interested in the computational complexity
of the word   problem of  $\mathfrak B$,
asking whether two
formulas in the  language of involutive monoids
denote the same Murray-von Neumann equivalence class of
projections of  $\mathfrak B$.
By Elliott's classification,  this
 is the equivalent counterpart for $\mathfrak B$
of the classical word problem of a
group presented by generators and relations, \cite{man}.

In Theorem \ref{theorem:polytime-emmone} we establish the
polynomial time complexity of the
word problem 
of  ${\mathfrak{M}_1}$. 
Since
$E({\mathfrak{M}_1})$ is free, the
word problem can be naturally 
posed for  all AF$\ell$ algebras  $\mathfrak Q$
with singly generated
 $E(\mathfrak Q)$,  because the preservation properties
 of $K_0$ ensure that $\mathfrak Q$ is a quotient
 of $\mathfrak M_1$, just as 
$E(\mathfrak Q)$ is a  quotient
of  $E({\mathfrak{M}_1})$.
In Theorem
\ref{theorem:polytime-bl} and Corollary
\ref{corollary:polytime-other-bl}  
we prove  the polynomial time complexity of the word problem
of  Behnke-Leptin C$^*$ algebras
$\mathcal A_{m,n}$.  
In Theorem \ref{theorem:polytime-ef-cf}
and Corollaries 
   \ref{corollary:quadratic}
   and  \ref{corollary:polytime-ef-cut},   a large set 
 of  Effros-Shen algebras  
$\mathfrak F_{\theta}$ is shown to have a polynomial
time word problem. This includes the case  when
$\theta = 1/e$, or 
 $\theta\in \interval\setminus \mathbb Q$ is  
a real algebraic
number. 
  In a final section a quotient of
$\mathfrak M_1$ is constructed having a  
G\"odel incomplete word problem. It is also shown
that  G\"odel incompleteness does not affect primitive quotients.

Familiarity is required with Elliott's classification
of AF algebras and  
 $K_0,$  \cite{eff,  ell, ell-lnm, goo-shiva},  
lattice-ordered abelian groups
\cite{bigkeiwol}, and
the basic properties of
the equational class of
 Elliott involutive monoids arising from  AF$\ell$ algebras---better
known as MV-algebras,
\cite{cha58, cha59,  cigdotmun,  mun11}. 
For the rudiments
of diophantine approximation applied
to  Effros-Shen algebras  in this paper, 
 we refer to \cite{lan}.
For background on Turing machines and
algorithmic  complexity see  
\cite{man, mon, munsie}. 
For complexity theoretic issues
concerning  real numbers we refer
to \cite{ker} and \cite{mis}.

\section{AF algebras with a Murray-von Neumann lattice of
projections}

 Elliott  local  semigroup 
 \cite{ell}, is  a ring invariant consisting of equivalence classes of idempotents,   together  with the partially defined addition of equivalence classes of orthogonal idempotents,  two idempotents    $i$    and    $j$  being  {\it equivalent}   iff   
 $xy = i$    and   $ yx  =  j$    
 for some   $x$   and  $ y$  in the ring.
	 
As explained in \cite[Appendix]{ell} and in \cite[p. 32]{ell-lnm},   for every AF algebra $\mathfrak B$  the invariant is complete.  
Elements of Elliott local semigroup  $L(\mathfrak B)$  boil down to 
Murray von Neumann equivalence classes of projections,  with the partially defined operation   
$[p] + [q] =   [p + q] $,   whenever     
$p$     and    $ q$     are orthogonal.

For any two equivalence classes    $[p]$   and 
$ [q]$   in   $L(\mathfrak B)$,   we write   $[p]  \leq  [q]$  iff   $p$   is equivalent to a subprojection of  $q$.   
This order is known as the Murray von Neumann order   
of (projections in)  $\mathfrak B$.
Elliott's partially defined addition  +  is associative, commutative, and monotone:  if   $[p]  \leq  [q]$   and  $[r]  \leq  [s]$   then    
$ [p] + [r]  \leq  [q]  +  [s] $,   whenever $ [q]  +  [s]$    is defined.

Since every AF algebra $\mathfrak B$ in this paper
has a unit element, 
 the underlying order of the $K_0$-group of $\mathfrak B$,
 as well as the unit element  $[1_\mathfrak B],$
 will be tacitly understood, whenever no confusion
 may arise.  
So we will usually  write 
$$
\mbox{
$K_0(\mathfrak B)$ instead of
   $(K_0(\mathfrak B),
[1_\mathfrak B])$
and  $(K_0(\mathfrak B),
K_0^+(\mathfrak B),
[1_\mathfrak B])$.}
$$ 
By a {\it unital $\ell$-group} we mean an abelian
lattice ordered group with a distinguished 
(strong) unit $u>0$, \cite{bigkeiwol, goo-ams}.

 \smallskip
The theory of  AF$\ell$ algebras 
is grounded in  the following result:

\begin{theorem}
\label{theorem:multi}  

Let $\mathfrak B$ be an AF algebra and $L(\mathfrak B)$  its
 (partially ordered)  local  semigroup.
 
\medskip
(i)  \cite{munpan}
Elliott's partially defined addition $+$ in 
$L(\mathfrak B)$   has 
{\em at most one}
 extension to an associative, commutative, 
monotone operation  $\oplus \colon
L(\mathfrak B)^2\to L(\mathfrak B)$ 
satisfying the following condition:
For each projection  $p \in  \mathfrak B$, 
$ [1_\mathfrak B - p ]$   
 is the smallest element $[q]\in L( \mathfrak B)$ with    $[p]+$
 $[q]=[1_\mathfrak B].$ 
The
 unique semigroup  $(S(\mathfrak B),\oplus)$
expanding  the Elliott  local  semigroup 
$L(\mathfrak B)$  exists iff    $\mathfrak B$   
is an AF$\ell$ algebra.

\medskip
(ii)  \cite{ell, ell-lnm}    Let    $\mathfrak B_1$   and    
$\mathfrak B_2$    be  AF$\ell$ algebras.    
For  each     $i = 1, 2$    let      $\oplus_i$    
be the  extension  of Elliott's addition  given by (i).    
Then the semigroups  $(S(\mathfrak B_1), \oplus_1)$    and   
 $(S(\mathfrak B_2), \oplus_2)$   are isomorphic iff so are      
$\mathfrak B_1$   and    
$\mathfrak B_2$.
 
\medskip
   (iii)   \cite{munpan} For any
   AF$\ell$ algebra 
 $\mathfrak B$   the semigroup
$(S(\mathfrak B), \oplus)$ 
has  the  structure of  a  monoid
$(E(\mathfrak B), 0, ^*,  \oplus)$ 
with an involution operation  
$[p]^* =  [1_\mathfrak B - p ]$. 
The Murray von Neumann  
lattice order  of equivalence classes of projections 
    $[p],[q]$ in    $\mathfrak B$ is definable via
    the involutive monoidal operations, by writing
    $[p] \vee [q]=([p]^*\oplus [q])^*\oplus [q]$ and
     $[p]\wedge [q]= ([p]^*\vee [q]^*)^*$   
for all $[p],[q] \in  E(\mathfrak B).$

\medskip
   (iv)   \cite{eff, ell, ell-lnm,  goo-shiva}
For any
   AF$\ell$ algebra 
 $\mathfrak B$
the dimension group $K_0(\mathfrak B)$
is a countable unital $\ell$-group.
All countable unital $\ell$-groups arise in this way.
Let    $\mathfrak B$   and    
$\mathfrak B'$    be  AF$\ell$ algebras.
Then $K_0(\mathfrak B)$ and $K_0(\mathfrak B')$
  are isomorphic as unital  
  $\ell$-groups  iff       
$\mathfrak B$   and    
$\mathfrak B'$ are isomorphic.

\medskip
(v)  \cite[Theorem 3.9]{mun-jfa}  
Up to isomorphism, the map 
 $\mathfrak B \to  (E(\mathfrak B),
 0, ^*, \oplus)$ 
   is a one-one correspondence between      AF$\ell$ algebras     and  
countable abelian monoids with a unary operation   $^*$  satisfying the equations:
$$
\mbox{$x^{**}=x,  \,\,\,\,\,\,\,   0^* \oplus x = 0^*,   \,\,\,\,\,\,
\mbox{ and }   \,\,\,\,\,\,  (x^* \oplus y)^* \oplus
 y = (y^* \oplus x)^* \oplus x.$}
 $$
Let $\Gamma$ be the categorical equivalence between
 unital  $\ell$-groups and  the class of 
 involutive monoids satisfying these equations.
Then the Elliott  involutive monoid
 $(E(\mathfrak B),
 0, ^*, \oplus)$  is isomorphic to 
 $\Gamma(K_0(\mathfrak B))$. 
 \end{theorem}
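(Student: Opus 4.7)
The statement assembles five cited results; the plan is to organize them so that Elliott's classification theorem drives everything, with the Murray--von Neumann lattice condition on $\mathfrak B$ serving as the hinge that passes between the $C^*$-algebraic and the order-theoretic sides.

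I would start with (iv). Elliott's classification, together with the Effros--Handelman--Shen characterisation of $K_0$ of a unital AF algebra as a countable dimension group with order unit, supplies the functor $\mathfrak B \mapsto K_0(\mathfrak B)$. The key additional observation, which I would verify directly, is that the Murray--von Neumann order on projections of $\mathfrak B$ coincides, under the natural embedding $L(\mathfrak B) \hookrightarrow K_0(\mathfrak B)$, with the restriction of the dimension-group order; consequently the AF$\ell$ hypothesis is equivalent to $K_0(\mathfrak B)$ being a unital $\ell$-group. The isomorphism classification in (iv) is then a direct specialization of Elliott's theorem.

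With (iv) in hand, (i) is the most substantive step. For existence, I would define $[p]\oplus[q]$ to be the truncated sum $([p]+[q])\wedge[1_{\mathfrak B}]$ computed inside the unit interval of $K_0(\mathfrak B)$; associativity, commutativity, monotonicity, and the complementation clause are then routine verifications, and agreement with Elliott's partial $+$ holds because $[p]+[q]$ already sits below $[1_{\mathfrak B}]$ whenever $p,q$ are orthogonal. For uniqueness, I would show that any candidate $\oplus$ must agree with this truncated sum: given $[p],[q]$, find orthogonal representatives of $[p]$ and $[q]\wedge[1_{\mathfrak B}-p]$, which exist by the lattice hypothesis; associativity and the complementation clause then force the value of $[p]\oplus[q]$. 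For the ``only if'' direction of (i), observe that any such $\oplus$ makes $L(\mathfrak B)$ into an involutive monoid whose order admits meets and joins via the MV-formulas in (iii), so $\mathfrak B$ must already be AF$\ell$.

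Given (i) and (iv), part (iii) reduces to verifying the three MV-equations of (v), which hold in every unit interval of a unital $\ell$-group; the displayed definitions of $\vee$ and $\wedge$ are then standard MV-algebra identities. Part (ii) is immediate from the completeness of Elliott's local semigroup as an invariant, combined with the canonicity of $\oplus$ supplied by (i). Finally, (v) combines (iv) with the categorical equivalence $\Gamma$ between unital $\ell$-groups and MV-algebras: the identification $E(\mathfrak B) \cong \Gamma(K_0(\mathfrak B))$ is the composite of the AF$\ell$-to-$\ell$-group passage of (iv) with $\Gamma$. The main obstacle I expect is the order-comparison step sitting behind (iv) and (i): one must match the $C^*$-algebraic Murray--von Neumann order on projections with the order inherited from $K_0(\mathfrak B)$ and show that the lattice property transfers faithfully; once that bridge is secure, everything else is algebraic bookkeeping.
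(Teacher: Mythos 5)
This theorem is not proved in the paper at all: it is a summary of cited results, with (i) and (iii) attributed to Mundici--Panti \cite{munpan}, (ii) and (iv) to Elliott and Effros--Handelman--Shen style $K_0$ theory \cite{ell, ell-lnm, eff, goo-shiva}, and (v) to \cite[Theorem 3.9]{mun-jfa}. Your outline correctly identifies the same architecture the literature uses (pass to $K_0$, identify the Murray--von Neumann order with the dimension-group order, realise $\oplus$ as truncated addition in the unit interval, then invoke the $\Gamma$ equivalence), and your treatment of (ii), (iv) and (v) is legitimate modulo the same external inputs the paper itself cites.

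The genuine gap is in (i), which is precisely the nontrivial content of \cite{munpan} and cannot be dispatched as you do. For uniqueness, your argument gives only one inequality: choosing orthogonal representatives of $[p]$ and $[q]\wedge[1_{\mathfrak B}-p]$ and using monotonicity yields $[p]\oplus[q]\ \geq\ [p]+([q]\wedge[1_{\mathfrak B}-p])$, but nothing you state forces the reverse inequality $[p]\oplus[q]\leq [p]+([q]\wedge[1_{\mathfrak B}-p])$; the identity $x\oplus y=x\oplus(y\wedge x^*)$ is an MV identity whose proof uses distributivity of $\oplus$ over $\wedge$, which an arbitrary associative, commutative, monotone extension with the least-complement property is not yet known to satisfy. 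The actual proof pins $\oplus$ down using Riesz-decomposition/interpolation properties of the local semigroup of an AF algebra, which your sketch never invokes. The same circularity affects your ``only if'' direction: you assert that any admissible $\oplus$ makes the Murray--von Neumann order a lattice ``via the MV-formulas in (iii)'', but the fact that $([p]^*\oplus[q])^*\oplus[q]$ computes the Murray--von Neumann supremum for an arbitrary admissible $\oplus$ is exactly what has to be proved, not observed. Finally, the bridge you label a direct verification in (iv) --- that the Murray--von Neumann order on projection classes being a lattice is equivalent to $K_0(\mathfrak B)$ being a unital $\ell$-group, i.e.\ that the lattice property lifts from the unit interval $[0,u]$ to the whole interpolation group --- is itself a nontrivial lemma (essentially \cite[Theorem 3.9]{mun-jfa} and Lazar's work), and both your existence construction and your uniqueness argument silently depend on it.
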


 \medskip
 
 \section{The Farey  AF algebra $\mathfrak M_1$ 
 of \cite{mun-adv} (also see \cite{boc}  and 
 \cite{mun-milan, mun-lincei})}
 \label{section:ledue}

\subsection*{First construction, \cite{mun-adv}}
For each $m=1,2,\ldots$ let $\nu(m)=1+2^{m-1}$.
The  sequence  $A_{1}, A_{2},\ldots$
of $\{0,1\}$-matrices with $\nu(m+1)$ rows
and  $\nu(m)$ columns is defined by:

{\tiny
\[A_1= \left[
\begin{matrix}
 1&0\\ 
 1&1\\ 
 0&1 
\end{matrix}
\right]  
A_2=
  \left[
\begin{matrix}
 1&0&0\\ 
 1&1&0\\ 
 0&1&0\\
 0&1&1\\
 0&0&1, 
\end{matrix}
\right]
A_3=
  \left[
\begin{matrix}
 1&0&0&0&0\\ 
 1&1&0&0&0\\ 
 0&1&0&0&0\\
 0&1&1&0&0\\
 0&0&1&0&0\\
 0&0&1&1&0\\
 0&0&0&1&0\\
 0&0&0&1&1\\
 0&0&0&0&1
\end{matrix}
\right]
A_4=
  \left[
\begin{matrix}
 1&0&0&0&0&0&0&0&0\\ 
 1&1&0&0&0&0&0&0&0\\ 
 0&1&0&0&0&0&0&0&0\\
 0&1&1&0&0&0&0&0&0\\
 0&0&1&0&0&0&0&0&0\\
 0&0&1&1&0&0&0&0&0\\
 0&0&0&1&0&0&0&0&0\\
 0&0&0&1&1&0&0&0&0\\
 0&0&0&0&1&0&0&0&0\\
 0&0&0&0&1&1&0&0&0\\
  0&0&0&0&0&1&0&0&0\\
    0&0&0&0&0&1&1&0&0\\
	0&0&0&0&0&0&1&0&0\\
	0&0&0&0&0&0&1&1&0\\
	0&0&0&0&0&0&0&1&0\\
	0&0&0&0&0&0&0&1&1\\
	0&0&0&0&0&0&0&0&1
\end{matrix}
\right]
{\ldots}
\]}

\noindent where for each  $j=1,\ldots,2^{m-1}$ the
$2j$th row of $A_{m}$ has a 1 in positions $j, j+1$
and is 0 elsewhere, the $(2j-1)$th row has a 1
in position $j$ and is 0 elsewhere,
and the last row has all 0 except a final 1. The map
$
\zeta_m \colon
x\in \mathbb Z^{\nu(m)}\mapsto A_{m}x\in  \mathbb Z^{\nu(m+1)}
$
is a one-one  homomorphism of the free abelian
group  $\mathbb Z^{\nu(m)}$ into
$\mathbb Z^{\nu(m+1)}$, which  
preserves
  the product ordering of these groups. Equipping the simplicial group
$\mathbb Z^{\nu(1)}$ with
the (strong) order unit $u_{1}=(1,1)$,  the element
$u_{t}=A_{t}A_{t-1}\cdots A_{1}u$ is an order unit
in $\mathbb Z^{\nu(t)},\,\,\,t=1,2,\ldots.$
Following \cite[pp.31,33]{mun-adv},
we denote by   $M_1$,  or $(M_1,1)$ for greater clarity,
  the unital 
  $\ell$-group of all continuous piecewise linear
  functions  $f\colon [0,1]\to \mathbb R$
  such that each linear piece of $f$ has
  integer coefficients.
 $M_1$ is
equipped with the pointwise operations $+,-,\wedge,\vee$ of
$\mathbb R$, and with the
constant function 1 as 
its distinguished order unit.
The number of linear pieces of every
element of $M_1$ is finite. Throughout, the adjective
``linear'' is to be understood in the affine sense.
In \cite[3.3]{mun-adv} it is proved that
$
\lim ((\mathbb Z^{\nu(m)},u_{m}),A_{m}) = (M_1,1), 
$
where limits are taken in the category of
partially ordered groups with order unit, \cite{goo-ams}.
In the light of Elliott's classification \cite{ell, ell-lnm} and
further $K_0$-theoretic developments, \cite{goo-shiva, eff},
(see Theorem \ref{theorem:multi}(iv))
the AF algebra  $\mathfrak  M_{1}$
was defined  in   \cite[p.33]{mun-adv}  by writing
%
$$(K_{0}(\mathfrak  M_{1}),[1_{\mathfrak  M_{1}}]) \cong 
(M_1,1).$$
{}From the unital positive homomorphisms
$A_{m}$ one may construct a Bratteli diagram 
of  $\mathfrak M_1$ following
 \cite[Chapter 2]{eff}.

\subsection*{Second construction, \cite{boc}}   
In the  Bratteli diagram of Figure \ref{figure:bratteli},
the  two top  (depth 0) vertices $v_1$ and $v_2$
have a label  1,  $\lambda(v_{01})=\lambda(v_{02})=1$.  
There are $n(d)=2^d+1$ vertices at depth $d$,  $(d=0,1,\dots).$
The label  $\lambda(v)$ of any vertex
$v$ at depth $d=1,2,\ldots$  is the sum of the labels
of the vertices at depth  $d-1$ connected to $v$ by an edge.
Identifying each vertex $v$ with the matrix algebra
$\mathbb M_{\lambda(v)}$,  for each $d=0,1,2,\ldots,$
the finite-dimensional C*-algebra  
$\mathfrak A_{d}$ is defined as the direct sum of
the   $n(d)$  matrix algebras at depth $d$.
The edges from depth $d$ to depth
$d+1$  then determine a unital *-homomorphism
 $\phi_{d}\colon \mathfrak A_{d}\to
 \mathfrak A_{d+1}$, following the standard interpretation
 of Bratteli diagrams,  \cite{bra},  \cite[Chapter 2]{eff}.
The  AF algebra
$\mathfrak A$ is defined 
as $\mathfrak A=\lim (\mathfrak A_{d},\phi_{d}).$

 \begin{figure} 
    \begin{center}                                     
    \includegraphics[height=6.9cm]{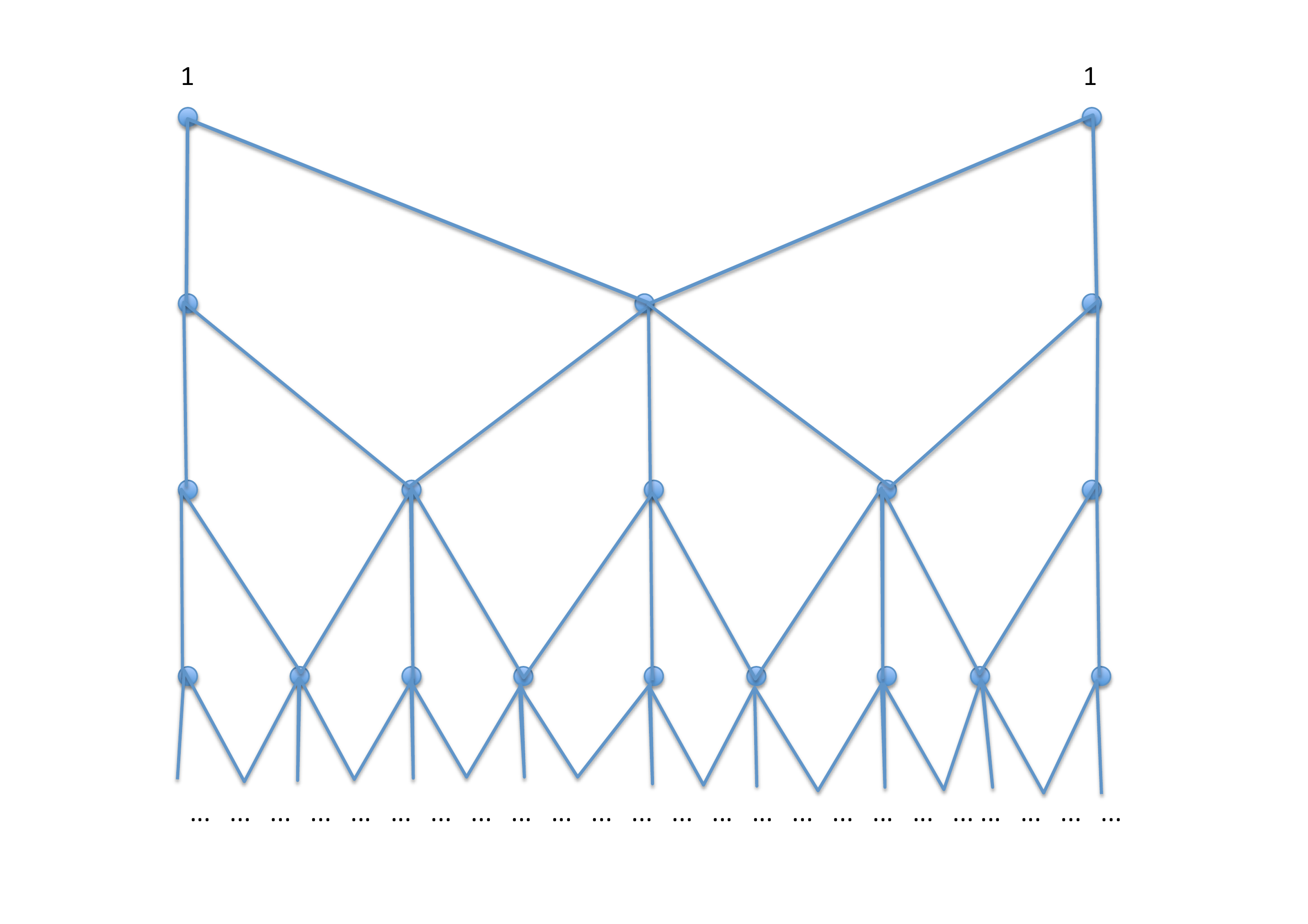}    
    \end{center}                                       
 \caption{\small The Bratteli diagram of $\mathfrak A,$
 \cite[p. 977]{boc}.}
    \label{figure:bratteli}                                                 
   \end{figure}

\begin{theorem}
{\rm \cite[p.329]{mun-lincei}, \cite[Theorem 1.1]{mun-milan}}
	\label{theorem:eguali} 
\,\,\, $\mathfrak A=\mathfrak  M_{1}.\,\,\,$
	\end{theorem}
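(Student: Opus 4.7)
By Elliott's classification (Theorem \ref{theorem:multi}(iv)), it suffices to exhibit an isomorphism of unital dimension groups
\[
(K_0(\mathfrak{A}), [1_{\mathfrak{A}}]) \;\cong\; (M_1, 1).
\]
I would read $K_0(\mathfrak{A})$ off the Bratteli diagram of Figure \ref{figure:bratteli} as the unital direct limit of the simplicial groups $(\mathbb{Z}^{n(d)}, u_d)$ along the transition matrices $B_d$ encoded by the edges between depths $d$ and $d+1$, where $u_d$ is the vector whose $v$-th entry is the label $\lambda(v)$. Since $n(d) = 2^d + 1 = 1 + 2^{(d+1)-1} = \nu(d+1)$, after the index shift $m = d+1$ the two inductive systems $(\mathbb{Z}^{n(d)}, u_d, B_d)$ and $(\mathbb{Z}^{\nu(m)}, u_m, A_m)$ live over identical ambient simplicial groups, so it is enough to check that $B_d = A_{d+1}$ and $u_d = u_{d+1}$ in a compatible ordering of the vertices.

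To align the two systems I would label the vertices at depth $d$ of Boca's diagram by the $2^d + 1$ Stern--Brocot fractions $0 = p_0/q_0 < p_1/q_1 < \cdots < p_{2^d}/q_{2^d} = 1$ produced by iterated mediant insertion, and check by induction on $d$ that $\lambda(v_k) = q_k$; the inductive step is immediate from the mediant rule $q = q' + q''$ combined with Boca's recipe that $\lambda(v)$ equals the sum of the labels of its parents. To each vertex I then associate the Schauder hat $h_k^{(d)} \in M_1$, namely the continuous piecewise linear function with integer slopes, supported on $[p_{k-1}/q_{k-1}, p_{k+1}/q_{k+1}]$ (with the obvious convention at the endpoints), that peaks with value $1$ at $p_k/q_k$.

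The candidate isomorphism is then given at each level by $\mathbb{Z}^{n(d)} \to M_1$, $e_k \mapsto h_k^{(d)}$. Two identities have to be verified: first, $\sum_k h_k^{(d)} \equiv 1$ on $[0,1]$, so that the order unit is preserved; second, at each level $d$, with inherited vertices at odd positions and newly inserted mediants at even positions at level $d+1$, the refinement identity $h_k^{(d)} = h_{2k-1}^{(d+1)} + h_{2k}^{(d+1)} + h_{2k+1}^{(d+1)}$ (trivially truncated at the boundary) holds in $M_1$; the matrix of coefficients it produces is exactly the $0/1$-pattern of $A_{d+1}$ prescribed in the first construction. Positivity of this map is automatic because Schauder hats are non-negative functions with integer slopes. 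Surjectivity onto $(M_1, 1)$ follows from the standard fact in MV-algebra theory that Schauder hats at all Stern--Brocot levels generate $M_1$ as an abelian group (every $f \in M_1$ becomes a $\mathbb{Z}$-combination of hats at a sufficiently fine Farey subdivision), while injectivity follows from the $\mathbb{Z}$-linear independence of the hats at any fixed level.

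The main obstacle I anticipate is purely combinatorial bookkeeping: lining up three different index conventions---the row/column ordering of the matrices $A_m$, the Bratteli ordering of vertices at each depth of Boca's diagram, and the left-to-right ordering of Stern--Brocot fractions in $[0,1]$---so that ``inherited'' and ``mediant'' vertices occupy the correct parities and the matrix of the refinement identity matches $A_{d+1}$ entry-wise. Once this dictionary is fixed no further analytic input is needed, and Elliott's theorem delivers $\mathfrak{A} \cong \mathfrak{M}_1$.
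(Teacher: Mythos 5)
Your overall architecture is the right one, and it is essentially the route of the sources the paper cites for this theorem (the paper itself gives no proof, only the references \cite{mun-lincei, mun-milan}): read the incidence matrices off Boca's diagram, observe $n(d)=2^d+1=\nu(d+1)$ so that after the index shift they are exactly the matrices $A_{d+1}$ with the labels as order units, identify the limit of the system $(\mathbb Z^{\nu(m)},u_m,A_m)$ with $(M_1,1)$ via Stern--Brocot/Farey Schauder hats, and invoke Elliott's classification through Theorem \ref{theorem:multi}(iv). Note also that the identification $\lim((\mathbb Z^{\nu(m)},u_m),A_m)=(M_1,1)$ is already quoted in Section \ref{section:ledue} from \cite[3.3]{mun-adv}, so you may simply cite it rather than re-derive it.

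There is, however, a concrete error in your hat normalization, and as written the two identities you propose to verify cannot both do the job. If $h_k^{(d)}$ peaks with value $1$ at $p_k/q_k$, then indeed $\sum_k h_k^{(d)}\equiv 1$, but this is not the statement you need: the class of $1_{\mathfrak A_d}$ in $K_0(\mathfrak A_d)=K_0(\bigoplus_k\mathbb M_{q_k})\cong\mathbb Z^{n(d)}$ is the label vector $u_d=(q_0,\dots,q_{2^d})$, not $(1,\dots,1)$, so preserving the unit means $u_d\mapsto 1$. Worse, the three-term refinement identity fails for peak-one hats: already at $d=0$, the left-hand hat at $0$ is $1-x$, while the right-hand side $\max(0,1-2x)+\min(2x,2-2x)$ takes the value $1\neq 1/2$ at $x=1/2$. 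The standard fix is to take the hat at $p_k/q_k$ normalized to peak value $1/q_k$ (affine on the intervals of the level-$d$ Farey partition, vanishing at the other nodes); unimodularity of consecutive Farey pairs gives integer coefficients for its linear pieces, one gets $\sum_k q_k h_k^{(d)}=1$, i.e.\ $u_d\mapsto 1$, and the refinement identity $h_k^{(d)}=h^{(d+1)}_{\text{old copy}}+h^{(d+1)}_{\text{adjacent mediants}}$ holds exactly with the $0/1$ pattern of $A_{d+1}$ (check the values at the level-$(d+1)$ nodes: $1/q_k$ at $p_k/q_k$ and $1/(q_{k-1}+q_k)$, $1/(q_k+q_{k+1})$ at the two mediants). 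Finally, for an isomorphism of \emph{ordered} groups with unit you need slightly more than nonnegativity of the hats and $\mathbb Z$-spanning: every $0\le f\in M_1$ must be a \emph{nonnegative} integer combination of hats at a sufficiently fine Farey subdivision (the coefficients are $q_kf(p_k/q_k)\in\mathbb Z_{\ge0}$ once $f$ is linear on each interval of the subdivision). With these corrections your argument is precisely the proof given in the cited references, and Elliott's theorem then yields $\mathfrak A\cong\mathfrak M_1$.
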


The Farey algebra 
$\mathfrak M_1$
 has many interesting properties
 and applications,   \cite{agu,boc, eck,
 mun-adv,  mun-lincei, mun-milan}.

\begin{theorem}  
\label{theorem:generator} 
$\mathfrak M_1$ has the following properties:

\smallskip
\begin{itemize}
\item[(a)]    
The Elliott involutive monoid
$E({\mathfrak M_1})=(E({\mathfrak M_1}),0,^*,\oplus)$
is singly generated.

\smallskip
\item[(b)]  For any  two generators $[p]$ and $[q]$
of   $E({\mathfrak M_1})$, the
map  $[p]\mapsto [q]$ uniquely extends to an automorphism
of $E({\mathfrak M_1})$. 

\smallskip
\item[(c)] 
If $[p]$ is a generator of  $E({\mathfrak M_1})$,  the only other
generator is  $[1_{\mathfrak M_1}-p].$
\end{itemize}
\end{theorem}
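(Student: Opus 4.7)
The plan is to transport the whole question to MV-algebras via Theorem \ref{theorem:multi}(v): since $K_0(\mathfrak M_1)\cong (M_1,1)$, the involutive monoid $E(\mathfrak M_1)$ is isomorphic to $\Gamma(M_1,1)$, which is precisely the MV-algebra $\McNone$ of McNaughton functions $h\colon[0,1]\to[0,1]$ (continuous, piecewise linear with integer-coefficient pieces) under the pointwise truncated operations and the involution $h^*=1-h$. By McNaughton's representation theorem, $\McNone$ is the \emph{free} MV-algebra on one generator, the free generator being $\ide$. Part (a) is then immediate, $[\ide]$ being a single generator of $E(\mathfrak M_1)$.

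For (c), I will classify all generators of $\McNone$ and show they are exactly $\ide$ and $\idestar=1-\ide$. If $f\in\McNone$ generates, then every $h\in\McNone$ has the form $h=\tau(f)$ for some one-variable MV-term $\tau$; since the operations of $\McNone$ are pointwise, $\tau(f)=\tau(\ide)\circ f$, so $h=g\circ f$ for $g:=\tau(\ide)\in\McNone$. Taking $h=\ide$ produces $g\circ f=\ide$, forcing $f$ to be injective. A continuous injection of $[0,1]$ into itself is strictly monotone, so $f$ maps $[0,1]$ homeomorphically onto $[f(0),f(1)]$ or $[f(1),f(0)]$.

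The main combinatorial step is then a short slope identity: writing $f$ with integer slopes $s_1,\dots,s_n$ on consecutive pieces of positive lengths $\ell_1,\dots,\ell_n$ with $\sum_i\ell_i=1$, strict monotonicity makes $\operatorname{sgn}(s_i)$ constant and $|s_i|\geq 1$; hence
\[
|f(1)-f(0)|=\sum_i|s_i|\,\ell_i\geq\sum_i\ell_i=1,
\]
with equality iff every $|s_i|=1$. Since $f\colon[0,1]\to[0,1]$ gives $|f(1)-f(0)|\leq 1$, we must have $|s_i|=1$ for all $i$, whence $f\in\{\ide,1-\ide\}$. Transported back through $\Gamma$, the generators of $E(\mathfrak M_1)$ form a two-element set $\{[p],[p]^*\}=\{[p],[1_{\mathfrak M_1}-p]\}$, proving (c); note that $[p]\neq[p]^*$ because $\ide\neq 1-\ide$.

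Finally, (b) follows from (a) together with (c): uniqueness of the extension is automatic, because an endomorphism of a singly generated algebra is determined by its value on a generator. For existence, the only nontrivial case is $[q]=[p]^*$, handled by the MV-automorphism $h\mapsto h\circ(1-\ide)$ of $\McNone$ (an involution, since $(1-\ide)\circ(1-\ide)=\ide$), transported to $E(\mathfrak M_1)$. The main substance of the plan is the three-line slope identity above; everything else is the standard MV-algebraic dictionary furnished by Theorem \ref{theorem:multi}(v) and McNaughton's theorem.
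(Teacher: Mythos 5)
Your proposal is correct, but it routes the argument differently from the paper. You prove (c) first, by directly classifying the generators of $\McNone$: the identity $\tau(f)=\tau(\ide)\circ f$ shows any generator $f$ must satisfy $g\circ f=\ide$ for some $g\in\McNone$, hence is injective, and your slope--length count $|f(1)-f(0)|=\sum_i|s_i|\ell_i\geq 1$ pins $f$ down to $\ide$ or $1-\ide$; then (b) falls out as a corollary, with the two automorphisms exhibited explicitly (the identity and $h\mapsto h\circ(1-\ide)$), uniqueness being automatic for a singly generated algebra. The paper goes the other way: it proves (b) first by invoking the J\'onsson--Tarski theorem to conclude that \emph{every} generator of the free one-generator MV algebra $\McNone$ is automatically a free generator, verifying the hypothesis of that theorem via Chang's completeness theorem and finite chains $\{0,1/d,\dots,1\}$; it then obtains (c) by showing $\McNone$ has exactly two automorphisms, analyzing $\alpha(\ide)$ with a one-sided-derivative argument close in spirit to your slope identity. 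What each approach buys: yours is more elementary and self-contained (no J\'onsson--Tarski, no completeness theorem) and delivers the explicit list of generators and automorphisms at once; the paper's route yields the stronger structural fact that any generator is a free generator, which makes (b) independent of the classification in (c). Two small points you leave implicit but which are immediate: that $1-\ide$ is indeed a generator (since $\ide=(1-\ide)^*$), so the generator set really has two elements, and that $h\mapsto h\circ(1-\ide)$ preserves $\McNone$ and its pointwise operations.
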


\begin{proof}
(a)  By  Theorem \ref{theorem:multi}(v),
 the involutive monoids  
$E({\mathfrak M_1})$ and
 $\Gamma(M_1,1)$ are isomorphic.
By definition of the $\Gamma$
functor,    \cite[p.20]{mun-jfa}, the underlying set of
$\Gamma(M_1,1)$ is the unit interval of $(M_1,1)$. Therefore, 
  $\Gamma(M_1,1)$  consists of
the $\interval$-valued functions in $(M_1,1)$, 
equipped with the pointwise operations  $f^*=1-f$ and $f\oplus g=
 (f + g)\wedge 1.$
Equivalently,   $\Gamma(M_1,1)$
 is the involutive monoid  
of all piecewise linear continuous
 $\interval$-valued functions $f$ defined on $\interval$, 
 such that every linear piece of $f$ has integer coefficients.
 These are known as  one-variable {\it McNaughton functions.}
When $\Gamma(M_1,1)$ is considered independently of
$(M_1,1)$, the notation 
$\McNone$  is customary.  So
\begin{equation}
\label{equation:isomorphism}
E({\mathfrak M_1}) \cong \McNone.
\end{equation}
 As a consequence of the one-variable
 fragment of  McNaughton's theorem \cite{mac}, 
 the involutive  monoid
 $\McNone$ is   generated by the identity function
 $\ide$ on $\interval.$  Therefore,   its isomorphic
 copy  $E(\mathfrak M_1)$ is  singly generated.

\smallskip
  (b) 
    By \eqref{equation:isomorphism},
we can equivalently argue for  $\McNone$
in place of   $E({\mathfrak M_1})$.
As we have seen in (a),  $\Gamma(M_1,1)$
is the involutive monoid
$\McNone$ of all   
McNaughton functions
  $f\colon \interval\to \interval$.
Abelian  monoids with a unary operation
     $^*$  satisfying the equations 
     $x^{**}=x,  \,\,\,   0^* \oplus x = 0^*,   \,\,\,
\mbox{ and }   \,\,\,  (x^* \oplus y)^* \oplus
 y = (y^* \oplus x)^* \oplus x\,\,$ 
 of Theorem \ref{theorem:multi}(v)
  are known as {\it MV  algebras},
   \cite{cha58,  cigdotmun, mun11}. 
McNaughton's theorem \cite{mac} 
 shows that  $\McNone$  is   
the  {\it free one-generator  MV  algebra}. 
See
 \cite[Corollary 3.2.8]{cigdotmun} for a direct proof
 of the one-variable case.
%
%
%

By the {\em standard MV algebra}  
we mean the 
MV algebra obtained by  equipping
 the unit real interval  with the
involutive monoid operations  $x^*=1-x$ and
$x\oplus y=\min(1,x+y).$
Following  \cite{cigdotmun, mun11},   the standard
MV-algebra 
will be denoted $\interval$.

Chang's completeness theorem \cite{cha59}
states  that
 if an equation $\sigma$   fails in some MV  algebra
 then $\sigma$  fails  in  the standard MV algebra  $\interval$.
Since the operations
$^*$ and $\oplus$ are continuous, the proof
of Chang's theorem given in   \cite[Theorem 2.5.3]{cigdotmun}
 actually shows that  $\sigma$ also   fails in the subalgebra
  $\interval \cap \, \mathbb Q$.  
  Let $\{r_1,\dots,r_m\}$ be rationals coded by
  the terms occurring in $\sigma.$
  Let $d$ be their least common denominator.  
It follows that  $\sigma$  fails in the 
 subalgebra of $\interval\cap \, \mathbb Q$,  whose universe is 
$\{0,1/d,\dots,(d-1)/d,1\}$.

 Thus the free MV  algebra
 $\McNone$
 satisfies the hypothesis of the J\'onsson-Tarski
 theorem, \cite[Theorem 2 and remarks on pp.95 and 98]{jontar}.
 The thesis of the theorem implies  that  
 any generator $g$ of  $\McNone$ is
 a free generator.  
  By definition of a free generating set, if $g'$ is any 
  (automatically free) generator of $\McNone$, the map 
$g \mapsto g'$ uniquely extends to an automorphism
of $\McNone$. Going back to 
$E({\mathfrak M_1})\cong \McNone$ we have the desired conclusion.

\smallskip
 (c)  
 Again, we will argue for $\McNone$ in place of 
$E({\mathfrak M_1}).$
 In view of (b), it  suffices to show that
$\McNone$   has precisely
two automorphisms.  Suppose $\alpha$
is an automorphism of $\McNone$ different from the identity. 
 Let  ${\mathsf j}=\alpha(\ide).$ 
 By (b),   ${\mathsf j}\not=\ide.$   Trivially, 
 ${\mathsf j}$  is a generator of $\McNone,$ and
  is one-one, because so is $\ide$.
Since ${\mathsf j}$ has integer coefficients then
$\{{\mathsf j}(0),{\mathsf j}(1)\}=\{0,1\}$. 
From the continuity of  ${\mathsf j}$ it follows that
${\mathsf j}$  is onto $[0,1]$.  For all  
$0<z<1$, both one-sided
derivatives $\partial {\mathsf j}(z)/\partial x^{\pm}$ exist,
because   ${\mathsf j}$ is piecewise linear.
Neither derivative 
can be zero,  for otherwise ${\mathsf j}$ would not be one-one.
Since all linear pieces of ${\mathsf j}$ have integer coefficients, 
$\partial {\mathsf j}(z)/\partial x^{\pm}$ is a nonzero integer.
Therefore,  the one-one function ${\mathsf j}$ has only
one linear piece. 
Since ${\mathsf j}$ is different from $\ide,$ then for all $x\in \interval$,\,\, 
$\partial {\mathsf j}(z)/\partial x^{+} = -1$,
whence ${\mathsf j}=1-\ide.$  
Since ${\mathsf j}$ is a  generator of $\McNone$, 
  for each $g\in \McNone$, $\alpha(g)$
  is the composite function $g\circ {\mathsf j},$ whence
  $(\alpha(g))(x)=g(1-x)$ for all $x\in \interval.$ This is the only
possibility for the automorphism $\alpha$ of $\McNone$,
other than the identity automorphism.
\end{proof}

 \section{The primitive quotients of $\mathfrak M_1$}
 \label{section:primitive}

For later use, in  this
 section we  provide background material for
the study of the word problem of  
primitive quotients of $\mathfrak M_1.$

\medskip
\begin{theorem}
\label{theorem:robinson-apal}
	The complete list of prime ideals of 
the Elliott involutive monoid  $E(\mathfrak M_1)=\McNone$
  is as follows:
	
	\begin{itemize}
	\item[(i)] For each  $\xi \in [0,1],$ the maximal ideal  
 $\mathfrak j_{\xi}=\{f\in  \McNone
 \mid f(\xi)=0\}.$

 \smallskip
\item[(ii)]  $\mathfrak j_{0}^{+}=\{f\in \McNone  \mid f(0)=0\,\,\, 
\mbox{and }\,\,\,
	{\partial f}(0)/{\partial x^{+}}=0\}$.

	\smallskip
\item[(iii)]   $\mathfrak j_{1}^{-}=\{f\in \McNone \mid
f(1)=0\,\,\,
\mbox{and }\,\,\,
 {\partial f}(1)/{\partial x^{-}}=0\}$.

	\smallskip
\item[(iv)]  For every rational $0<a/b< 1$,
 $\mathfrak j_{a/b}^{\pm}=$
$\{f\in \McNone  \mid f(a/b)=0$ and 
	${\partial f}(a/b)/{\partial x^{\pm}}=0\}$.
\end{itemize}	
	\end{theorem}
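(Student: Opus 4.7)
The plan is to verify primality of each listed ideal directly and then establish completeness by analyzing an arbitrary prime ideal $\mathfrak p$ via its totally ordered quotient $T = \McNone/\mathfrak p$ and the image $t$ of the canonical generator $\ide$. For part (i), the evaluation map $f \mapsto f(\xi)$ identifies $\McNone/\mathfrak j_\xi$ with the MV-subalgebra $\interval \cap (\mathbb Z + \xi\mathbb Z)$ of the standard MV-algebra; this quotient is totally ordered and archimedean, hence simple, so $\mathfrak j_\xi$ is maximal. For parts (ii)--(iv) I first observe that $f \in \mathfrak j_{a/b}^+$ iff the right linear piece of $f$ at $a/b$ is identically zero, equivalently iff $f \equiv 0$ on some $[a/b, a/b+\varepsilon]$; the quotient $\McNone/\mathfrak j_{a/b}^+$ then embeds into $\Gamma(\tfrac{1}{b}\mathbb Z \lex \mathbb Z,(1,0))$ via $[f] \mapsto (f(a/b),\partial f(a/b)/\partial x^+)$, and the lex order on the target makes it totally ordered, so $\mathfrak j_{a/b}^+$ is prime. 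The primality of $\mathfrak j_0^+$, $\mathfrak j_1^-$, and $\mathfrak j_{a/b}^-$ is handled in exactly the same way.

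For completeness, let $\mathfrak p$ be any prime ideal and write $(G,u)$ for the unital $\ell$-group with $\Gamma(G,u) = T$. Since $T$ is totally ordered, its lattice operations coincide with $\max,\min$, so $G$ is generated as an \emph{abelian} group (not merely as an $\ell$-group) by $t$ and $u$; thus $G = \mathbb Z t + \mathbb Z u$. By H\"older's theorem, the archimedean quotient $G/\mathrm{Rad}(G)$ embeds uniquely into $\mathbb R$ with $u \mapsto 1$, and $t$ then projects to a unique $\xi \in \interval$; equivalently $\mathfrak p$ is contained in the unique maximal ideal $\mathfrak j_\xi$.

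I then split into cases according to $\xi$. If $\xi$ is irrational, then $mt + nu \mapsto m\xi + n$ has trivial kernel by linear independence of $1,\xi$ over $\mathbb Z$, so $G$ is archimedean and $\mathfrak p = \mathfrak j_\xi$. If $\xi = a/b$ with $\gcd(a,b)=1$, then the kernel of $G \to \tfrac{1}{b}\mathbb Z$ is $\mathbb Z\cdot(bt - au)$. When $bt - au = 0$ in $G$, the quotient is simple and $\mathfrak p = \mathfrak j_{a/b}$; otherwise $bt - au$ is a nonzero infinitesimal whose sign in the totally ordered group $G$ is either $+$ or $-$, yielding $\mathfrak p = \mathfrak j_{a/b}^+$ or $\mathfrak p = \mathfrak j_{a/b}^-$ respectively. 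At the endpoints $\xi \in \{0,1\}$ the constraint $0 \le t \le u$ forces one of the two signs to be impossible, leaving only $\mathfrak j_0^+$ above $0$ and $\mathfrak j_1^-$ below $1$.

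The main obstacle will be the identification step in the rational case: once $bt - au$ has been shown to be, say, a positive infinitesimal, I must verify that $\ker(\pi) = \mathfrak j_{a/b}^+$ exactly, i.e.\ that $\pi(f)$ depends on $f$ only through its right germ $(f(a/b),\partial f(a/b)/\partial x^+)$. This amounts to pushing the piecewise-affine representation of an arbitrary McNaughton function $f$ through $G = \mathbb Z t + \mathbb Z u$ and showing that the contributions coming from linear pieces of $f$ away from $a/b$ are absorbed by the single relation $bt = au + (bt - au)$; the delicate point is the integer bookkeeping that forces the right linear piece at $a/b$ to provide the correct integer coefficients $(p_+,q_+)$ with $\pi(f) = p_+ t + q_+ u$.
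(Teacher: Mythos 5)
Your overall strategy (analyze an arbitrary prime $\mathfrak p$ through the totally ordered quotient $T=\McNone/\mathfrak p$, show $G=\mathbb Z t+\mathbb Z u$, locate $\mathfrak p$ under a unique $\mathfrak j_\xi$ via H\"older, then split on $\xi$ rational/irrational) is a genuinely different route from the paper, which simply invokes the known classification of prime ideals in free MV-algebras (\cite[\S 2]{busmun}) together with the fact that $\McNone$ is the free one-generator MV-algebra. Your route is viable, and the verification that each listed ideal is prime is fine. But there is a genuine gap exactly where you flag it: in the rational (and endpoint) cases you only establish $\mathfrak p\subseteq\mathfrak j_{a/b}$ and that $bt-au$ is either $0$ or a signed nonzero infinitesimal; you never prove the reverse containment $\mathfrak j_{a/b}^{\pm}\subseteq\mathfrak p$, i.e.\ that every $f$ vanishing on a one-sided neighborhood of $a/b$ is killed by $\pi$. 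Without that, nothing excludes a prime strictly contained in $\mathfrak j_{a/b}^{+}$, so completeness of the list is not proved; and the ``integer bookkeeping with the piecewise-affine representation'' you sketch is not an argument, since the image $\pi(f)=m_ft+n_fu$ is obtained abstractly from the MV-term structure of $f$, and it is precisely the assertion that $m_f,n_f$ are readable from the right germ of $f$ at $a/b$ that has to be proved.

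The gap can be closed, and more cleanly than by bookkeeping, using freeness once more. In the case $bt-au\neq 0$ with, say, $bt-au>0$: the group $G=\mathbb Z t+\mathbb Z u$ has rank $2$ (its image in $\mathbb R$ is $\tfrac1b\mathbb Z$ and the kernel $\mathbb Z(bt-au)$ is infinite cyclic), and its total order is forced to be the pullback lexicographic order: $x>0$ iff $v(x)>0$, or $v(x)=0$ and $x$ is a positive multiple of $bt-au$, where $v$ is the H\"older value map. Hence the pair $(T,t)$ is determined up to isomorphism by $\xi=a/b$ and the sign. Now the germ map $f\mapsto\bigl(f(a/b),\,\partial f(a/b)/\partial x^{+}\bigr)$ is a unital $\ell$-homomorphism of $M_1$ into $\mathbb R\lex\mathbb R$ whose restriction to $\McNone$ sends $\ide$ to the element corresponding to $t$ and has kernel exactly $\mathfrak j_{a/b}^{+}$; since $\McNone$ is free on $\ide$, two homomorphisms onto isomorphic pointed quotients sending $\ide$ to corresponding elements coincide up to that isomorphism, so their kernels agree and $\mathfrak p=\mathfrak j_{a/b}^{+}$ (similarly for the $-$ case, for $\xi\in\{0,1\}$, and trivially for $bt-au=0$). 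Spelling this out would complete your proof; as written, the decisive identification is only announced.
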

	
	\begin{proof} See
	the classification of
	prime ideals in free $n$-generator 
	MV algebras,
	in  \cite[\S 2]{busmun}, for $n=1.$
	Recall that $\McNone$ is the free
	one-generator MV algebra,
	(proof of Theorem \ref{theorem:generator}(b)).
	\end{proof}
%
%
%

We will make repeated use of the following result:

 \begin{lemma}
 \label{lemma:lattice-quotient}
 Let $\mathfrak B$ be an AF algebra, and
 $(G,u)=K_0(\mathfrak B)$ its associated unital
 dimension group, \cite{eff, goo-shiva}.
 
 \medskip
 (i)  For every  ideal
 $\mathfrak I$ of $\mathfrak B$ we have the isomorphism
 $K_0(\mathfrak B/\mathfrak I)\cong 
  K_0(\mathfrak B)/K_0(\mathfrak I)$, where
  $K_0(\mathfrak I)$ is the
  image of $\mathfrak I$ in $K_0(\mathfrak B).$

 \medskip 
  (ii) The map
  $
  \mathfrak I\mapsto K_0(\mathfrak I)
  $
  is an isomorphism of the lattice of
  ideals of $\mathfrak B$ onto the lattice
  of ideals of the unital dimension group $K_0(\mathfrak B).$
  
 \medskip
  (iii) If  $\mathfrak B$ is an AF$\ell$ algebra, the map
  $
  \mathfrak I\mapsto \Gamma(K_0(\mathfrak I))
  $
   is an isomorphism of the lattice of
  ideals of $\mathfrak B$ onto the lattice
  of ideals (=kernels of homomorphisms) of  
  $E(\mathfrak B),$ where
  $ \Gamma(K_0(\mathfrak I)) $ is the intersection
  of $K_0(\mathfrak I)$ with the unit interval  $[0,u]$ 
  of the unital dimension group $(G,u).$
  
 \medskip
  (iv) 
For any    AF$\ell$ algebra $\mathfrak B$ and ideal 
$\mathfrak J$ of $\mathfrak B$, the quotient
$\mathfrak B/\mathfrak I$ is an
  AF$\ell$ algebra  and  $E(\mathfrak B/\mathfrak I)
  \cong E(\mathfrak B)/\Gamma(K_0(\mathfrak I)).$
 \end{lemma}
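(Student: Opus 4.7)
The plan is to deduce all four parts from the standard functoriality of $K_0$ for AF algebras, together with the categorical equivalence $\Gamma$ already in force from Theorem \ref{theorem:multi}(v).

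Parts (i) and (ii) are the classical Bratteli--Elliott correspondence: the short exact sequence $0\to\mathfrak I\to\mathfrak B\to\mathfrak B/\mathfrak I\to 0$ induces, via the exactness of $K_0$ on AF algebras, a short exact sequence $0\to K_0(\mathfrak I)\to K_0(\mathfrak B)\to K_0(\mathfrak B/\mathfrak I)\to 0$ of partially ordered groups with order unit, proving (i). For (ii) I would invoke the classical bijection between closed two-sided ideals of $\mathfrak B$ and order ideals of $K_0(\mathfrak B)$ (as in \cite{eff, goo-shiva}), and observe that suprema and intersections on both sides are computed set-theoretically, so the bijection is automatically a lattice isomorphism.

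For (iii), under the AF$\ell$ hypothesis and Theorem \ref{theorem:multi}(iv), $(G,u)=K_0(\mathfrak B)$ is a unital $\ell$-group, and every order ideal of $(G,u)$ that is directed (equivalently, generated by its positive part, as $K_0(\mathfrak I)$ plainly is) is automatically a convex $\ell$-subgroup, i.e.\ an $\ell$-ideal in the sense of \cite{bigkeiwol}. The equivalence $\Gamma$ of Theorem \ref{theorem:multi}(v) restricts to a lattice isomorphism between $\ell$-ideals of $(G,u)$ and MV-ideals of $\Gamma(G,u)$ via $J\mapsto J\cap[0,u]=\Gamma(J)$. Composing with (ii) gives (iii). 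For (iv), since $K_0(\mathfrak I)$ is an $\ell$-ideal, $(G,u)/K_0(\mathfrak I)$ is again a unital $\ell$-group, so by Theorem \ref{theorem:multi}(iv) the AF algebra realizing this $K_0$ is AF$\ell$; by (i) this algebra is $\mathfrak B/\mathfrak I$. Applying $\Gamma$ and using its exactness on $\ell$-ideal quotients, we obtain
$$
E(\mathfrak B/\mathfrak I)\cong \Gamma\bigl(K_0(\mathfrak B)/K_0(\mathfrak I)\bigr)\cong \Gamma(K_0(\mathfrak B))/\Gamma(K_0(\mathfrak I))\cong E(\mathfrak B)/\Gamma(K_0(\mathfrak I)).
$$

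The main (and indeed only nonroutine) obstacle is verifying that $\Gamma$ commutes with quotients by $\ell$-ideals: that the natural morphism $\Gamma(G,u)\to\Gamma((G,u)/J)$ is surjective with kernel precisely $\Gamma(J)$. Surjectivity amounts to lifting each element of $[0,u+J]$ to a representative in $[0,u]$, which is accomplished by exploiting the convexity of $J$ together with the lattice operations (replacing an arbitrary lift $x$ by $(x\vee 0)\wedge u$). The details are part of the Mundici correspondence and I would simply reference \cite{cigdotmun, mun11} rather than re-derive them.
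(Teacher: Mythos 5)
Your proposal is correct and follows essentially the same route as the paper: (i)--(ii) are the classical Bratteli--Elliott/Effros--Goodearl correspondence between ideals of an AF algebra and ideals of its dimension group, and (iii)--(iv) are obtained by composing this with the $\Gamma$-induced lattice isomorphism of ideals and the compatibility of $\Gamma$ with quotients by $\ell$-ideals, which the paper likewise delegates to \cite[Theorems 7.2.2 and 7.2.4]{cigdotmun}. Your added remarks (directed convex subgroups of an $\ell$-group are $\ell$-ideals; lifting via $(x\vee 0)\wedge u$) merely spell out details the paper leaves to those references.
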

 
 \begin{proof} (i) 
  \cite[p.34]{ell-lnm} or 
\cite[Corollary 9.2]{eff}.
(ii)   \cite[p.196]{goo-shiva}.
(iii)  Use (ii) together with
 \cite[Theorem 7.2.2]{cigdotmun}. 
 (iv) Use  (i) together with
  \cite[Theorem 7.2.4]{cigdotmun}.   
 \end{proof}

\begin{corollary}
	\label{corollary:ell-ideals}
	The complete list of prime ideals of 
	the unital $\ell$-group
	$M_1$ is as follows:
	
	\smallskip
	
	\begin{itemize}
	\item[(i)] For each  $\xi\in [0,1],$  the maximal ideal
 $J_{\xi}=\{f\in M_1\mid f(\xi)=0\}.$

 \smallskip
\item[(ii)]  $J_{0}^{+}=\{f\in J_{0}\mid 
	{\partial f}(0)/{\partial x^{+}}=0\}$.

	\smallskip
\item[(iii)]   $J_{1}^{-}=\{f\in J_{1}\mid 
	{\partial f}(1)/{\partial x^{-}}=0\}$.

	\smallskip
\item[(iv)]  For each rational $0<a/b< 1$,
 $J_{a/b}^{\pm}=
\{f\in J_{a/b} \mid 
	{\partial f}(a/b)/{\partial x^{\pm}}=0\}$.
\end{itemize}	
	\end{corollary}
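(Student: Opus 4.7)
The plan is to derive this classification from Theorem \ref{theorem:robinson-apal} by transporting it across the $\Gamma$ correspondence. By Lemma \ref{lemma:lattice-quotient}(ii)--(iii), combined with the isomorphisms $K_0(\mathfrak M_1)\cong(M_1,1)$ and $E(\mathfrak M_1)\cong \McNone = \Gamma(M_1,1)$, the assignment $J\mapsto \Gamma(J) = J\cap[0,1]$ is a lattice isomorphism between the $\ell$-ideals of $(M_1,1)$ and the MV-ideals of $\McNone$. Since in a unital $\ell$-group an $\ell$-ideal is determined by its positive part, which in turn is determined by its intersection with the unit interval $[0,u]$, and since the standard $\Gamma$-equivalence matches prime $\ell$-ideals with prime MV-ideals, this restricts to a bijection between the prime $\ell$-ideals of $M_1$ and the prime MV-ideals of $\McNone$. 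It then suffices to identify, for each of the MV-ideals $\mathfrak j_\xi,\mathfrak j_0^+,\mathfrak j_1^-,\mathfrak j_{a/b}^\pm$ in Theorem \ref{theorem:robinson-apal}, the unique $\ell$-ideal of $M_1$ that it pulls back to.

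The natural candidates are the sets $J_\xi, J_0^+, J_1^-, J_{a/b}^\pm$ described in the statement. Each is visibly closed under $+,-,\vee,\wedge$, and closed under the order: if $|f|\le|g|$ with $g$ in the ideal, then $g(\xi)=0$ forces $|f(\xi)|\le|g(\xi)|=0$, so $f(\xi)=0$; moreover, when a one-sided derivative condition is imposed on $g$, we get $|\partial f(\xi)/\partial x^{\pm}|\le|\partial g(\xi)/\partial x^{\pm}|=0$, so the same derivative condition passes to $f$. Hence each candidate is an $\ell$-ideal of $M_1$.

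Finally I would check that $\Gamma$ carries these $\ell$-ideals exactly onto the corresponding MV-ideals of Theorem \ref{theorem:robinson-apal}. For any $f\in M_1$, $|f|\wedge 1$ agrees with $|f|$ on a neighborhood of any zero $\xi$ of $f$, so both the vanishing condition $f(\xi)=0$ and the one-sided derivative conditions $\partial f(\xi)/\partial x^{\pm}=0$ transfer verbatim to $|f|\wedge 1\in\McNone$, and conversely. Thus $J_\xi\cap[0,1]=\mathfrak j_\xi$ and similarly for the other three families; the lists match term by term, and completeness of the $M_1$-list follows at once from completeness of the $\McNone$-list. I do not expect any real obstacle: the proof is essentially a translation exercise driven by the $\Gamma$-equivalence already packaged in Lemma \ref{lemma:lattice-quotient}, with the only delicate point being the observation that truncation at $1$ does not affect local behavior at a zero of the function.
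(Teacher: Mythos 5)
Your proposal is correct and follows essentially the same route as the paper: both transfer the classification of prime ideals of $\McNone$ (Theorem \ref{theorem:robinson-apal}) to $(M_1,1)$ via the lattice isomorphism between MV-ideals of $\Gamma(G,u)$ and $\ell$-ideals of $(G,u)$, whose inverse is $J\mapsto J\cap[0,u]$. The paper simply cites this isomorphism (from \cite[Theorem 7.2.2]{cigdotmun}) and stops, whereas you additionally spell out the routine verifications (that each $J_\xi$, $J_0^+$, $J_1^-$, $J_{a/b}^{\pm}$ is an $\ell$-ideal whose unit-interval part is the corresponding $\mathfrak j$-ideal, and that truncation at $1$ does not affect behavior near a zero), which the paper leaves implicit.
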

	
	\begin{proof}
Let   $(G,u)$ be a unital $\ell$-group, 
with its
associated MV-algebra  $A=\Gamma(G,u)$.
By \cite[Theorem 7.2.2]{cigdotmun},   the map
$$
\mathfrak i\in \mbox{ set of ideals of } A\mapsto
\mbox{ smallest ideal of $(G,u)$ containing }\mathfrak i
$$
  is a lattice isomorphism
of the set of ideals of $A$ onto the set of ideals of
$(G,u).$ The inverse map sends every ideal $J$ of $(G,u)$
into the intersection of $J$ with the unit interval 
$[0,u]$ of $(G,u).$
Now apply 
 Theorem \ref{theorem:robinson-apal}.
\end{proof}

\medskip
With reference to Lemma \ref{lemma:lattice-quotient}, for every
AF algebra $\mathfrak B$, let
 $\mathfrak{id}=\mathfrak{id}_\mathfrak B$ be the isomorphism
 of  the lattice of ideals of the dimension group
  $K_{0}(\mathfrak B)$
onto the lattice of ideals of $\mathfrak B$.  
Thus for
 every ideal $\mathfrak J$ of
		$\mathfrak B$, 
\begin{equation}
\label{equation:inversa}
\mathfrak{id}^{-1}(\mathfrak J) =  K_{0}(\mathfrak J)=
\mbox{ the image of $\mathfrak J$ in $K_{0}(\mathfrak B)$}.
\end{equation}
{}From  Corollary  \ref{corollary:ell-ideals}
 we immediately  obtain   the  classification of 
 the set $\prim\mathfrak M_1$ of primitive ideals of
 $\mathfrak M_1$ independently re-obtained in
   \cite[Proposition 7]{boc}: 

\begin{corollary}
	\label{corollary:primitive-ideals}
	The complete list of the set
	$\prim\mathfrak M_1$ of 
	primitive ideals of $\mathfrak M_1$
	is as follows:

	\begin{enumerate}
	\item[(i)] For each  $\xi\in [0,1],$ 
 $\mathfrak J_{\xi}= \mathfrak{id}(J_{\xi}).$

 \smallskip
	\item[(ii)]  $\mathfrak J_{0}^{+}=
	\mathfrak{id}(J_{0}^{+})$.

	\smallskip
	\item[(iii)]  $\mathfrak J_{1}^{-}=
	\mathfrak{id}(J_{1}^{-})$.

	\smallskip
	\item[(iv)] For each rational $0<a/b< 1$,
 $\mathfrak J_{a/b}^{\pm}=
\mathfrak{id}( J_{a/b}^{\pm}).$ 
 \hfill$\Box$
\end{enumerate}
\end{corollary}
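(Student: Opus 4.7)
The plan is to obtain the list by transporting the prime ideal classification for $M_1$ given in Corollary \ref{corollary:ell-ideals} through the lattice isomorphism $\mathfrak{id}=\mathfrak{id}_{\mathfrak M_1}$ provided by Lemma \ref{lemma:lattice-quotient}(ii). The only nontrivial point to check is that $\mathfrak{id}$ carries prime ideals of the dimension group $M_1=K_0(\mathfrak M_1)$ precisely onto primitive ideals of $\mathfrak M_1$.

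First I would recall the standard fact that every AF algebra is separable, so by Dixmier's theorem the primitive ideals of $\mathfrak M_1$ coincide with the prime ideals of the lattice of closed two-sided ideals of $\mathfrak M_1$. Since $\mathfrak{id}$ is a lattice isomorphism onto this ideal lattice (Lemma \ref{lemma:lattice-quotient}(ii)), primeness is preserved in both directions: an ideal $J$ of $M_1$ is prime iff $\mathfrak{id}(J)$ is a prime (equivalently, primitive) ideal of $\mathfrak M_1$. Thus
\[
\prim\mathfrak M_1=\{\mathfrak{id}(J)\mid J\text{ a prime ideal of }M_1\}.
\]

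Next I would simply read off the list: Corollary \ref{corollary:ell-ideals} enumerates all prime ideals of $M_1$ as the maximal ideals $J_{\xi}$ ($\xi\in\interval$), the one-sided endpoint ideals $J_{0}^{+}$ and $J_{1}^{-}$, and the one-sided rational ideals $J_{a/b}^{\pm}$ for $0<a/b<1$. Applying $\mathfrak{id}$ term by term yields exactly the families (i)--(iv) of the statement, and the correspondence is bijective because $\mathfrak{id}$ is.

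No genuine obstacle arises; the only care needed is to invoke the separability-based equivalence of ``prime'' and ``primitive'' for ideals of an AF algebra (alternatively, one can cite the AF-specific correspondence between primitive ideals and prime ideals of the dimension group, e.g.\ \cite[Chapter 3]{eff}) so that the lattice isomorphism $\mathfrak{id}$ indeed implements the desired bijection between $\prim\mathfrak M_1$ and the prime spectrum of $M_1$.
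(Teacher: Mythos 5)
Your proposal is correct and follows the same route as the paper: the paper obtains the list precisely by applying the lattice isomorphism $\mathfrak{id}$ of Lemma \ref{lemma:lattice-quotient}(ii) to the prime ideals of $M_1$ classified in Corollary \ref{corollary:ell-ideals}, stating the result without further argument. The only extra content in your write-up is the (correct, standard) justification that for a separable C*-algebra primitive ideals coincide with the lattice-prime closed ideals, so that the lattice isomorphism matches them with the prime $\ell$-ideals of $K_0(\mathfrak M_1)$ --- exactly the point the paper leaves implicit.
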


Corollaries
\ref{corollary:ell-ideals} and 
\ref{corollary:primitive-ideals}, in combination with
Lemma \ref{lemma:lattice-quotient}(i)-(ii) yield:

\begin{corollary} 
{\rm \cite[Theorem 3.2(iii)]{mun-milan}}
\label{corollary:K0-list} 
Letting $\cong$ denote
unital $\ell$-group isomorphism, we have:

\medskip		

\begin{itemize}
\item[(i)] $K_{0}(\mathfrak M_1/\mathfrak J_{p/q})\cong
(M_1,1)/J_{p/q}\cong(\mathbb Z,q)$,
for every rational $p/q\in \interval$.

\medskip			
\item[(ii)] Let $\mathbb Z \lex \mathbb Z$ denote the
free abelian group 
$\mathbb Z^{2}$
equipped with the {\rm lexicographic ordering}: 
$
(x,y)\geq 0\,\,\,\, \mbox{  iff \,\,\,either }
x>0  \mbox{ or }  (x=0  \mbox{ and }y\geq 0).
$  
Then 
$$K_{0}(\mathfrak M_1/\mathfrak J_{0}^{+})\cong
(M_1,1)/J_{0}^{+}\cong(\mathbb Z \lex \mathbb Z,(1,0)) 
\cong(M_1,1)/J_{1}^{-}
\cong K_{0}(\mathfrak
M_1/\mathfrak J_{1}^{-}).$$

\medskip			
\item[(iii)] For any rational  
$p/q\in \{x\in \mathbb R \mid 0<x<1\}$,  
written in  its lowest terms, let
 $p^{-1}\in \{1,\ldots,q-1\}$
be the inverse of $p$ modulo $q$.
Then
$$K_{0}(\mathfrak M_1/\mathfrak J_{p/q}^{+})\cong
(M_1,1)/J_{p/q}^{+}\cong
(\mathbb Z \lex \mathbb Z,(q,-p^{-1})).$$

\smallskip
\item[(iv)]  
$K_{0}(\mathfrak M_1/\mathfrak J_{p/q}^{-})
\cong (M_1,1)/J_{p/q}^{-}\cong
(\mathbb Z \lex \mathbb Z,(q,p^{-1})).$  

\end{itemize} 
\end{corollary}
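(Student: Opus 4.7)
The plan is to apply Lemma \ref{lemma:lattice-quotient}(i) together with the identification $K_0(\mathfrak M_1) \cong (M_1,1)$ and the bijection $\mathfrak J_* \leftrightarrow J_*$ of Corollary \ref{corollary:primitive-ideals}. This immediately gives the first isomorphism in each of (i)--(iv), reducing the problem to identifying the quotient $(M_1,1)/J$ as a unital $\ell$-group, for each ideal $J$ listed in Corollary \ref{corollary:ell-ideals}.

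For (i) I would use the evaluation homomorphism $e\colon f \mapsto f(p/q)$, whose image is $\tfrac{1}{q}\mathbb Z$ (since $p,q$ are coprime and every linear piece of $f \in M_1$ has integer coefficients) and whose kernel is $J_{p/q}$; scaling by $q$ identifies this with $(\mathbb Z, q)$. For (ii) I would use the map $\Psi \colon f \mapsto \bigl(f(0),\, \partial f(0)/\partial x^+\bigr)$, surjective onto the underlying group $\mathbb Z^2$ with kernel $J_0^+$; the order condition reduces to the observation that a non-negative $f \in M_1$ with $f(0)=0$ must have non-negative right slope at $0$, so $\Psi$ is an $\ell$-group homomorphism onto $(\mathbb Z \lex \mathbb Z, (1,0))$. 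The companion case $J_1^-$ follows from the $\ell$-group involution $f(x) \mapsto f(1-x)$ of $M_1$, which fixes the unit $1$ and interchanges $J_0^+$ with $J_1^-$.

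The main obstacle lies in (iii)--(iv). I would treat (iii) by defining $\Phi \colon M_1 \to \mathbb Z^2$ via $\Phi(f) = (a,b)$, where $a + bx$ is the linear piece of $f$ immediately to the right of $p/q$; then $\ker \Phi = J_{p/q}^+$ and $\Phi$ is surjective. The transported positive cone is not standard lex but the twisted cone $(a,b) \geq 0 \iff aq + bp > 0$ or $(aq+bp = 0$ and $b \geq 0)$, whose unique non-trivial convex subgroup is $\mathbb Z \cdot (-p,q)$. To straighten it to $(\mathbb Z \lex \mathbb Z)$ I would change basis to $e_1 = (u,v)$, $e_2 = (-p,q)$ with $u,v \in \mathbb Z$ chosen so that $uq + vp = 1$; the relation $vp \equiv 1 \pmod q$ then forces $v \equiv p^{-1} \pmod q$. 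A short calculation shows that in the new basis the order is standard lex and that the unit $\Phi(1) = (1,0)$ acquires coordinates $(q,-v)$. Since the $\ell$-group automorphisms $(x,y) \mapsto (x, y + cx)$ of $\mathbb Z \lex \mathbb Z$ move $(q,-v)$ through its congruence class modulo $q$, and $-v \equiv -p^{-1} \pmod q$, this identifies the quotient with $(\mathbb Z \lex \mathbb Z, (q,-p^{-1}))$. Case (iv) is analogous via the left linear piece at $p/q$; the sign flip between right- and left-slope positivity is precisely what replaces $-p^{-1}$ by $p^{-1}$.
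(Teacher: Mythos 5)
Your argument is correct, but it follows a different route from the paper, which gives no in-text proof at all: Corollary \ref{corollary:K0-list} is quoted from \cite[Theorem 3.2(iii)]{mun-milan}, and the paper merely observes that it follows by combining Corollaries \ref{corollary:ell-ideals} and \ref{corollary:primitive-ideals} with Lemma \ref{lemma:lattice-quotient}(i)--(ii), i.e.\ exactly the reduction you perform in your first paragraph, with the identification of the quotients $(M_1,1)/J$ outsourced to the cited reference. What you supply beyond that is, in effect, the content of the cited theorem: the evaluation map for (i), the germ map $f\mapsto(f(0),\partial f(0)/\partial x^{+})$ and the involution $f(x)\mapsto f(1-x)$ for (ii), and for (iii)--(iv) the right/left germ coordinates at $p/q$, the transported ``twisted'' total order $(a,b)\ge 0$ iff $aq+bp>0$ or ($aq+bp=0$ and $\pm b\ge 0$), the unimodular change of basis using $uq+vp=1$ (which is where $p^{-1}\bmod q$ enters), and the shear automorphisms $(x,y)\mapsto(x,y+cx)$ of $\mathbb Z\lex\mathbb Z$ to normalize the unit to $(q,\mp p^{-1})$. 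All of these steps check out (the kernels are exactly $J_{p/q}$, $J_{0}^{+}$, $J_{1}^{-}$, $J_{p/q}^{\pm}$ by piecewise linearity with integer coefficients, surjectivity holds because $a+bx\in M_1$ for all integers $a,b$, and the quotient order is the germ order since one may add $f^{-}=(-f)\vee 0$, which lies in the relevant ideal). The only points worth making explicit if you wrote this up would be the verification that the quotient order coincides with the ``eventually nonnegative near the point'' order, and the sign check in (iv) that nonnegativity on a left neighborhood with $f(p/q)=0$ forces a nonpositive left slope; both are routine. Your version buys a self-contained proof where the paper relies on a citation, at the cost of the explicit coordinate computations.
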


\begin{corollary}
{\rm  \cite[Proof of Theorem 3.1]{mun-adv}}
\label{corollary:adjoint}
For every
every irrational $\theta\in\interval$, let 
$\mathfrak F_\theta$ denote the Effros-Shen algebra of
\cite[\S 10]{eff}. Then 

\begin{equation}
\label{equation:inter}
K_{0}(\mathfrak M_1/\mathfrak J_{\theta})\cong
K_{0}(\mathfrak M_1)/K_0(\mathfrak J_{\theta})\cong
(M_1,1)/J_{\theta}\cong(\mathbb Z+\mathbb Z\theta)
\cong K_0(\mathfrak F_\theta).
\end{equation}
\end{corollary}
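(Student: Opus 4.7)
The plan is to read off the chain \eqref{equation:inter} one isomorphism at a time, since each piece is either an application of already--stated machinery or a routine identification via an evaluation map.

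First I would dispatch the leftmost isomorphism by a direct appeal to Lemma \ref{lemma:lattice-quotient}(i), applied to the AF$\ell$ algebra $\mathfrak M_1$ and the primitive ideal $\mathfrak J_\theta$ of Corollary \ref{corollary:primitive-ideals}(i). Next, combining the identification $K_0(\mathfrak M_1)\cong(M_1,1)$ with the defining equation $\mathfrak J_\theta=\mathfrak{id}(J_\theta)$ and \eqref{equation:inversa}, the image of $\mathfrak J_\theta$ in $K_0(\mathfrak M_1)$ is exactly $J_\theta$, which yields the second isomorphism $K_0(\mathfrak M_1)/K_0(\mathfrak J_\theta)\cong(M_1,1)/J_\theta$ as unital $\ell$-groups.

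The central step is the third isomorphism $(M_1,1)/J_\theta\cong(\mathbb Z+\mathbb Z\theta,1)$. For this I would consider the evaluation map
$$\mathrm{ev}_\theta\colon (M_1,1)\to(\mathbb R,1),\qquad f\mapsto f(\theta),$$
where the target carries the inherited order. Because every $f\in M_1$ is continuous and piecewise linear with integer coefficients, $\mathrm{ev}_\theta$ is a unital $\ell$-homomorphism. Its kernel is visibly $J_\theta$. To compute its image, observe that each linear piece has the form $ax+b$ with $a,b\in\mathbb Z$, hence $f(\theta)\in\mathbb Z+\mathbb Z\theta$; conversely the identity function $\mathrm{id}_{[0,1]}$ together with the unit $1$ produces both $\theta$ and $1$ as values, so $\mathrm{ev}_\theta$ is onto $\mathbb Z+\mathbb Z\theta$. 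The first isomorphism theorem for unital $\ell$-groups then gives the desired isomorphism, where one must note that irrationality of $\theta$ ensures $\mathbb Z+\mathbb Z\theta$ is a totally ordered rank--$2$ subgroup of $\mathbb R$. Finally, the rightmost isomorphism $(\mathbb Z+\mathbb Z\theta,1)\cong K_0(\mathfrak F_\theta)$ is precisely the classical computation of Effros--Shen \cite[\S 10]{eff}, which may be quoted directly.

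The only step requiring any real care is the surjectivity and correct order on $\mathrm{ev}_\theta$: surjectivity is immediate, and the $\ell$-group structure on $\mathbb Z+\mathbb Z\theta$ inherited from $\mathbb R$ coincides with the quotient $\ell$-group structure because $J_\theta$ is a prime (even maximal) $\ell$-ideal of $M_1$ by Corollary \ref{corollary:ell-ideals}(i), so the quotient is totally ordered and embeds order-faithfully into $\mathbb R$ via $\mathrm{ev}_\theta$. No separate obstacle arises; the statement is essentially an assembly of the previously proved lattice--of--ideals correspondence with the Effros--Shen description of $K_0(\mathfrak F_\theta)$.
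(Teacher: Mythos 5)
Your proposal is correct and takes essentially the same route as the paper, whose proof simply combines \eqref{equation:inversa} with the preservation properties of $K_0$ in Lemma \ref{lemma:lattice-quotient}(i)-(ii) and the definition of $\mathfrak F_\theta$. The only difference is that you spell out the evaluation-map argument giving $(M_1,1)/J_\theta\cong(\mathbb Z+\mathbb Z\theta,1)$, a step the paper leaves implicit by citing the proof of Theorem 3.1 in \cite{mun-adv}.
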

\begin{proof} 
Combine
\eqref{equation:inversa}  
with the preservation properties of
$K_0$,  
Lemma \ref{lemma:lattice-quotient}(i)-(ii).
\end{proof}

\begin{corollary}
{\rm \cite[Proposition 4]{boc}}\,\,\, 
	\label{corollary:primitive-quotients}
With the notation of
Corollary \ref{corollary:primitive-ideals},
the  primitive quotients of
$\mathfrak M_1$ are as follows:

\begin{itemize}
		
\item[(i)] For each  $\theta\in[0,1]\setminus
 \mathbb Q$, \,\,\,$\mathfrak M_1/\mathfrak J_{\theta}
 =\mathfrak F_{\theta}$.  

\medskip
\item[(ii)] $\mathfrak M_1/\mathfrak J_{0}= \mathfrak M_1/\mathfrak J_{1}=$ $\mathbb C$.  For
any rational $0< p/q<1$ (written  in its
lowest terms), $\mathfrak M_1/\mathfrak J_{p/q} = \mathbb M_{q}$.

\medskip
\item[(iii)] For every separable Hilbert space $L$, let $\mathbb K(L)$
denote the C$^{*}$-algebra of all compact operators on $L$.  Further,
let $H_{1}$ stand for an infinite-dimensional separable Hilbert space. 
Then $\mathfrak M_1/\mathfrak J^{+}_{0}
=\mathfrak M_1/\mathfrak J^{-}_{1}$ is the
Behnke-Leptin (unital) AF algebra $\mathcal A_{0,1}$ 
of operators on the Hilbert space $H=\mathbb
C \oplus H_{1}\otimes \mathbb C$ generated by $\mathbb K(H)$ and
$1_{H_{1}}\otimes \mathbb K(\mathbb C),\,$  $($see
\cite[p.330-331]{behlep}$)$.

\medskip
\item[(iv)] For any rational  $0<p/q<1$,
 letting $k\equiv -p^{-1}\mod q,
\,\,\,k \in \{1,\ldots,q-1\}$, \,\,\,
 $ \mathfrak M_1/\mathfrak J^{+}_{p/q}$ is the %
Behnke-Leptin AF algebra $\mathcal A_{k,q}$ of operators on the
Hilbert space $H=\mathbb C^{k}\oplus H_{1}\otimes \mathbb C^{q}$
generated by $\mathbb K(H)$ and $1_{H_{1}}\otimes \mathbb K(\mathbb
C^{q})$.

\medskip
\item[(v)] 
For any rational  $0<p/q<1$, letting 
 $h \equiv p^{-1}\mod q, \,\,\, h\in
\{1,\ldots,q-1\}$, \,\,\,
$ \mathfrak M_1/\mathfrak J^{-}_{p/q}$ is the Behnke-Leptin
  AF algebra $\mathcal A_{h,q}$.

 \end{itemize}
	\end{corollary}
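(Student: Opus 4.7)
The plan is to invoke Elliott's classification theorem (Theorem \ref{theorem:multi}(iv)): this reduces each clause to matching the unital dimension group $K_0(\mathfrak M_1/\mathfrak J)$ with the $K_0$ of the claimed target AF algebra. Since Corollary \ref{corollary:K0-list} has already computed $K_0$ for every primitive quotient listed in Corollary \ref{corollary:primitive-ideals}, the task reduces to evaluating $K_0$ on the right-hand side of each clause and checking the match as unital $\ell$-groups.

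Clauses (i) and (ii) are then almost automatic. For (i), Corollary \ref{corollary:adjoint} already supplies the unital $\ell$-group isomorphism $K_0(\mathfrak M_1/\mathfrak J_\theta)\cong K_0(\mathfrak F_\theta)$. For (ii), Corollary \ref{corollary:K0-list}(i) yields $K_0(\mathfrak M_1/\mathfrak J_{p/q})\cong(\mathbb Z,q)$, which matches $K_0(\mathbb M_q)$ with unit equal to the rank of $1_{\mathbb M_q}$; the boundary cases $\xi\in\{0,1\}$ specialize to $(\mathbb Z,1)\cong K_0(\mathbb C)$. An appeal to Theorem \ref{theorem:multi}(iv) closes both clauses.

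Clauses (iii)--(v) require matching $(\mathbb Z\lex\mathbb Z,(q,\pm p^{-1}))$, supplied by Corollary \ref{corollary:K0-list}(ii)--(iv), with the unital dimension group of the Behnke--Leptin algebra $\mathcal A_{k,q}$. I would compute $K_0(\mathcal A_{k,q})$ from the short exact sequence $0\to\mathbb K(H)\to\mathcal A_{k,q}\to\mathbb M_q\to 0$ implicit in the definition of $\mathcal A_{k,q}$: the essential ideal $\mathbb K(H)$ contributes the ``infinitesimal'' first coordinate of the $K_0$-group while the quotient $\mathbb M_q$ contributes the second, and the lexicographic ordering reflects the fact that a positive element has positive image in $\mathbb M_q$ unless it already lies in the compact ideal. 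The unit $(q,k)$ is then read off by tracking the class $[1_{\mathcal A_{k,q}}]$: its image in $K_0(\mathbb M_q)=\mathbb Z$ is $q$, while its component in $K_0(\mathbb K(H))=\mathbb Z$ picks out $k$, the rank of the identity on the finite-dimensional summand $\mathbb C^k$. The main obstacle will be precisely this $K_0$-calculation, in particular the correct identification of the positive cone and of the sign of $k$ in the unit, i.e.\ verifying that the Behnke--Leptin parametrization $k\in\{1,\dots,q-1\}$ yields $(\mathbb Z\lex\mathbb Z,(q,k))$ rather than a variant. Once this is pinned down, the elementary number-theoretic identifications $k\equiv -p^{-1}\pmod q$ in (iv) and $h\equiv p^{-1}\pmod q$ in (v) deliver the stated correspondence, and a final invocation of Theorem \ref{theorem:multi}(iv) concludes.
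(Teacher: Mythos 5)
Your proposal is correct in outline, but note that the paper itself offers no argument for this corollary: it is quoted from Boca (\cite[Proposition 4]{boc}), with the left-hand sides already prepared by Corollaries \ref{corollary:K0-list} and \ref{corollary:adjoint}. Your reconstruction via Theorem \ref{theorem:multi}(iv) is exactly the natural route, and clauses (i)--(ii) are indeed immediate from those corollaries. The genuine content, as you correctly flag, is the identification $K_0(\mathcal A_{k,q})\cong(\mathbb Z\lex\mathbb Z,(q,k))$; this is not obtained from the six-term sequence alone (which only gives the group extension $0\to\mathbb Z\to K_0(\mathcal A_{k,q})\to\mathbb Z\to 0$), but from the Behnke--Leptin/Elliott analysis of the order: every projection with nonzero image in $\mathbb M_q$ dominates all finite-rank classes because $H_1$ is infinite dimensional, which forces the lexicographic cone, and the natural splitting determined by the unital copy $1_{H_1}\otimes\mathbb K(\mathbb C^q)\cong\mathbb M_q$ gives $[1_{\mathcal A_{k,q}}]=[1_{H_1}\otimes 1_{\mathbb C^q}]+[P_{\mathbb C^k}]=(q,0)+(0,k)=(q,k)$ (Elliott carries this out for $\mathcal A_{0,1}$ in \cite[pp.~36--37]{ell-lnm}; the general case is the same computation, or can be read off the Bratteli diagram in \cite{behlep}). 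Two small points to tighten: your sentence assigning the ``infinitesimal'' part to the \emph{first} coordinate and the $\mathbb M_q$-rank to the second contradicts your later conclusion $(q,k)$ and the convention of Corollary \ref{corollary:K0-list}, where the dominant first coordinate is the $\mathbb M_q$-rank and the ideal $\mathbb K(H)$ contributes the infinitesimal second coordinate; and since Corollary \ref{corollary:K0-list}(iii) produces the unit $(q,-p^{-1})$ with $-p^{-1}\notin\{1,\dots,q-1\}$, you should make explicit that $(\mathbb Z\lex\mathbb Z,(q,c))\cong(\mathbb Z\lex\mathbb Z,(q,c+mq))$ via the order automorphism $(x,y)\mapsto(x,y+mx)$, so only the residue of the second coordinate mod $q$ matters --- this is precisely why the normalizations $k\equiv-p^{-1}$ and $h\equiv p^{-1}\pmod q$ are the correct ones in (iv) and (v). With these points pinned down, your argument is complete and is in substance the proof behind the cited result.
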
	
	
\medskip	 
By  Theorem  \ref{theorem:generator}, 
 the involutive monoid
 $E(\mathfrak M_1)$ has two possible
 generators.
Let us choose,
 once and for all,  a projection
 ${\mathsf p} \in \mathfrak M_1$ such that 
 $[\mathsf p]$ generates $E(\mathfrak M_1).$
Then
 $[\mathsf p]$  {\it  freely}  generates $E(\mathfrak M_1).$  
Since the McNaughton function   $\ide$ is a free generator of 
 $\McNone\cong E(\mathfrak M_1)$,  there is
 precisely
 one    isomorphism
$\omega$ of   $ \McNone $ onto   $E(\mathfrak M_1)$
mapping   $\omega(\ide)$ into $[\mathsf p].$ In symbols,
\begin{equation}
\label{equation:omega}
\omega\colon f\in\McNone\cong \omega(f)\in
E(\mathfrak M_1),\,\,\,\, \omega(\ide)=[\mathsf p].
\end{equation}
 Every  ideal   of $E(\mathfrak M_1)$
 is the $\omega$-image of a uniquely determined
 ideal $\mathfrak i$ of $\McNone$ in such a way that
 we have the isomorphism
\begin{equation}
\label{equation:omega-quotient}
[q]/\omega(\mathfrak i)\in E(\mathfrak M_1)/\omega(\mathfrak i) 
\cong
\omega^{-1}([q])/\mathfrak i\in \McNone/\mathfrak i,
\,\,\,\mbox{ for all } [q]\in E(\mathfrak M_1).  
\end{equation}

\begin{proposition}
\label{proposition:main-generator}
As in Theorem \ref{theorem:robinson-apal}(i),
 for  any irrational $\theta$  in $\interval,$
let  the  ideal $\mathfrak j_\theta$ of $\McNone$ be 
defined by 
  $\mathfrak j_{\theta}=\{f\in  \McNone
 \mid f(\theta)=0\}.$

\smallskip
(i)
There is
a unique embedding $\eta_\theta$ of  the 
quotient 
 $\McNone/\mathfrak j_\theta$  
into  the
 standard  MV algebra $\interval$ 
 introduced in the 
 proof of Theorem \ref{theorem:generator}(b).  

\smallskip
(ii) There is
a unique embedding $\iota_\theta$ of 
  $E(\mathfrak M_1)/\omega(\mathfrak j_\theta)$ 
into  $\interval.$  

\smallskip
(iii)
The projection  ${\mathsf p}\in \mathfrak M_1$ 
has the  following property:  For each irrational $\theta\in \interval$,
 \,\,$\iota_\theta([\mathsf p]/\omega(\mathfrak j_\theta))=\theta.$
\end{proposition}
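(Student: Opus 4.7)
The plan is to prove (i) first by constructing an explicit evaluation-at-$\theta$ map, and then transport the result along the fixed isomorphism $\omega$ of~\eqref{equation:omega} to obtain (ii) and (iii) essentially for free. The only real content is uniqueness in~(i), where irrationality of $\theta$ must be used.

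For existence in~(i), I would define $\eta_\theta(f/\mathfrak{j}_\theta) = f(\theta)$. Since evaluation $f \mapsto f(\theta)$ is visibly an MV-algebra homomorphism $\McNone \to \interval$, and since by the very definition of $\mathfrak{j}_\theta$ in Theorem~\ref{theorem:robinson-apal}(i) its kernel is exactly $\mathfrak{j}_\theta$, it descends to an embedding $\eta_\theta$ of $\McNone/\mathfrak{j}_\theta$ into $\interval$. The image sits inside $(\mathbb{Z}+\mathbb{Z}\theta) \cap \interval$, which matches the identification $\McNone/\mathfrak{j}_\theta \cong \Gamma(\mathbb{Z}+\mathbb{Z}\theta,1)$ obtained from Corollary~\ref{corollary:adjoint} via Theorem~\ref{theorem:multi}(v).

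The real work is uniqueness in~(i). Given any MV-embedding $\eta:\McNone/\mathfrak{j}_\theta \hookrightarrow \interval$, the $\Gamma$-equivalence of Theorem~\ref{theorem:multi}(v) lifts $\eta$ to a unital $\ell$-group embedding $\Phi:(\mathbb{Z}+\mathbb{Z}\theta,1) \hookrightarrow (\mathbb{R},1)$. Writing $\alpha = \Phi(\theta)$, unitality and additivity force $\Phi(m+n\theta) = m + n\alpha$ for all $m,n \in \mathbb{Z}$. Here irrationality of $\theta$ enters: for any rational $p/q$ with $p/q < \theta$ (so $q\theta-p > 0$) order preservation gives $q\alpha - p > 0$, whence $\alpha \geq \theta$; symmetrically $\alpha \leq \theta$. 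Hence $\alpha = \theta$, $\Phi$ is the inclusion, and $\eta = \eta_\theta$. This density-plus-order-preservation squeeze is the main obstacle; it is exactly where the hypothesis $\theta \notin \mathbb{Q}$ is consumed.

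Parts (ii) and (iii) are then bookkeeping. The isomorphism $\omega$ of~\eqref{equation:omega} sends ideals to ideals bijectively, so it descends to an MV-isomorphism $\bar\omega:\McNone/\mathfrak{j}_\theta \to E(\mathfrak{M}_1)/\omega(\mathfrak{j}_\theta)$; setting $\iota_\theta = \eta_\theta \circ \bar\omega^{-1}$ yields an embedding, and any other embedding pulled back along $\bar\omega$ violates the uniqueness established in~(i), giving~(ii). For~(iii), because $\omega(\ide) = [\mathsf{p}]$ by~\eqref{equation:omega}, one has $\bar\omega(\ide/\mathfrak{j}_\theta) = [\mathsf{p}]/\omega(\mathfrak{j}_\theta)$, and therefore
\[
\iota_\theta\bigl([\mathsf{p}]/\omega(\mathfrak{j}_\theta)\bigr) \;=\; \eta_\theta(\ide/\mathfrak{j}_\theta) \;=\; \ide(\theta) \;=\; \theta,
\]
completing the proof.
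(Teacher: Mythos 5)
Your proposal is correct, and it follows the paper's overall architecture (establish (i), then transfer (ii) and (iii) along $\omega$ via \eqref{equation:omega}--\eqref{equation:omega-quotient}), but it handles the core of (i) by a genuinely different route. The paper gets existence and uniqueness of $\eta_\theta$ in one stroke: $\mathfrak j_\theta$ is maximal, so $\McNone/\mathfrak j_\theta$ is simple, and \cite[Theorem 4.16(i)]{mun11} is invoked to give the unique embedding onto $\gen(\theta)$; for (iii) it then argues that $\gen(\theta)$ has only the identity automorphism (again \cite{mun11}, or H\"older), so the unique embedding must act as evaluation at $\theta$, whence $\eta_\theta(\ide/\mathfrak j_\theta)=\theta$. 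You instead construct the evaluation embedding explicitly and prove uniqueness from scratch: an arbitrary embedding of $\McNone/\mathfrak j_\theta\cong\Gamma(\mathbb Z+\mathbb Z\theta,1)$ into $\interval=\Gamma(\mathbb R,1)$ is lifted through the $\Gamma$-equivalence to a unital $\ell$-group embedding of $(\mathbb Z+\mathbb Z\theta,1)$ into $(\mathbb R,1)$, which is pinned down by unitality, additivity and the rational squeeze forcing $\Phi(\theta)=\theta$ --- in effect a self-contained reproof of the H\"older-type rigidity that the paper imports by citation. What each buys: your version is more elementary and makes (iii) immediate, since evaluation is built into the construction rather than deduced afterwards from automorphism rigidity; the paper's version is shorter and foregrounds simplicity of the quotient, which it reuses elsewhere. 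Two cosmetic remarks: the lifting step tacitly uses fullness of $\Gamma$ and the standard fact that injectivity passes between $\Gamma(G,u)$-morphisms and unital $\ell$-group morphisms (worth a one-line citation, e.g.\ \cite[Chapter 7]{cigdotmun}), and the squeeze only needs non-strict order preservation ($q\alpha-p\geq 0$ suffices), so nothing hinges on strict positivity being preserved.
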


\begin{proof} (i) 
Since $\mathfrak j_\theta$ is a maximal
ideal of  $\McNone$ then 
 $\McNone/\mathfrak j_\theta$
  is {\it simple}, i.e., $\{0\}$ is its only ideal.   
%
By   \cite[Theorem 4.16(i)]{mun11},
there is  precisely one  
embedding $\eta_\theta$ of   
 $\McNone/\mathfrak j_\theta$  into   $\interval$. 
In more detail, let $\gen(\theta)$ denote the
MV  subalgebra of   $\interval $   
generated by $\theta.$ Then $\eta_\theta$
is an isomorphism of  $\McNone/\mathfrak j_\theta$ onto
$\gen(\theta).$ 

\smallskip
(ii) 
%
 %
 %
By \eqref{equation:omega-quotient},
$E(\mathfrak M_1)/\omega(\mathfrak j_\theta)$
is isomorphic to 
$ \McNone/\mathfrak j_\theta.$
%
Now apply  (i).

\medskip
(iii) By \eqref{equation:omega}-\eqref{equation:omega-quotient},
it is enough to prove that 
the unique embedding
$$
\eta_\theta\colon  f /\mathfrak j_\theta \in 
\McNone/\mathfrak j_\theta \mapsto 
\eta_\theta(f/\mathfrak j_\theta)
\in \interval 
$$
satisfies
 $\eta_\theta(\ide/\mathfrak j_\theta)=\theta.$
 The proof of (i) allows us to identify 
the MV-algebras  $\gen(\theta)$ and 
$\McNone/\mathfrak j_\theta$. It is well known
that the only automorphism of  the MV algebra
$\gen(\theta)$ 
is identity, (see \cite[Theorem 4.16]{mun11}(i), or 
apply  
H\"older's theorem, \cite{bigkeiwol}). 
Two McNaughton functions
 $f,g \in \McNone$ satisfy  $f/ \mathfrak j_\theta=
g/ \mathfrak j_\theta$ iff they have the same value at $\theta$.
Thus the embedding 
$\eta_\theta$ acts on  
$f/ \mathfrak j_\theta$ as evaluation of $f$
at $\theta$, in symbols,
$
\eta_\theta(f/ \mathfrak j_\theta)=f(\theta) \mbox{ for all
} \,\, f\in \McNone.
$
In particular, 
 $
 \eta_\theta(\ide/\mathfrak j_\theta)=\ide(\theta)=\theta,$
 as desired.
\end{proof}

\section{The word problem of the
Farey algebra  $\mathfrak M_1$}
In view of Theorem
\ref{theorem:multi},
the word problem of  the   
 AF$\ell$ algebra
 $\mathfrak M_1$
can   be defined mimicking   the classical definition 
of the word problem for a group presented  by 
 generators and relations, 
 \cite[\S VIII, 1.7]{man}. 
 Thus,  given two formulas  $\phi$ and $\psi$ 
 in the language of involutive
 monoids, the word problem asks to 
 mechanically decide whether
 $\phi$ and $\psi$   code the same 
  equivalence of projections of $\mathfrak M_1$. 
 
 In the next section we will similarly define the
 word problem of every
 AF$\ell$-algebra  $\mathfrak B$ whose Elliott
 involutive monoid is singly generated. 
 We will be able to do so because
  the freeness
 properties of $E(\mathfrak M_1)$ ensure that
 $E(\mathfrak B)$ is a quotient of $E(\mathfrak M_1)$.
 The main aim of this paper is to assess  the
 algorithmic complexity
the word problem of various quotients of $\mathfrak M_1$.

\subsubsection*{Syntax/Semantics of the word problem.}
To get to work on  (Turing)  computational issues
like word problems, a modicum of
syntactical machinery must be introduced. To this purpose,
first of all, we 
 enrich  the language  (= set of constant and
 operation symbols)   $0,^*,\oplus$ of 
 one-generator involutive
monoids by adding a new constant symbol $X$ for the generator.
Throughout, by a {\it formula} we mean a string of symbols
over the alphabet $\{0,X,^*,\oplus,),(\}$, given by the following inductive
definition: 0 and $X$ are formulas; if  $\phi$ and $\psi$ are formulas,
then so are $\phi^*$ and $(\phi\oplus\psi).$ It is a tedious
exercise to verify
 that   this definition ensures ``unique readability'' of every
formula $\phi$, i.e., its mechanical unique decomposability
into  immediate subformulas, unless $\phi$ is a constant
symbol.
 
Every formula  $\phi$ in the language
 $0,X,^*,\oplus$\,\,\,   codes precisely one
McNaughton function ${\mathsf f}_\phi$ 
by the following inductive definition on the number of
 operation symbols in subformulas of $\phi$: \,\,\,
  ${\mathsf f}_0$ is 
  the zero function on $\interval$, \, and \,  ${\mathsf f}_X$  is
   $\ide.$  Inductively,  ${\mathsf f}_{\psi^*}=
   1-{\mathsf f}_{\psi}={\mathsf f}^*_\psi$,
  and ${\mathsf f}_{(\psi\oplus \chi)} = \min(1, {\mathsf f}_\psi+{\mathsf f}_\chi)={\mathsf f}_\psi\oplus {\mathsf f}_\chi.$
This  definition makes sense because of the
unique readability of formulas.
  We say that ${\mathsf f}_\psi$ is the {\it McNaughton function of}
 $\psi$.
 
   Figure \ref{figure:mcn}  shows  the McNaughton function of  
 the formula  
$$
((((X\oplus X)^*
 \oplus X)\oplus(X\oplus X)^*)^*\oplus(X\oplus X)^*)^*.
$$

Via the isomorphism $\omega$ of \eqref{equation:omega}, 
every formula 
$\psi$ simultaneously  codes   the 
  Murray-von Neumann
 equivalence class of projections  $\omega({\mathsf f}_\psi)$
of  $\mathfrak M_1.$

 \begin{figure} 
    \begin{center}                                     
    \includegraphics[height=7.9cm]{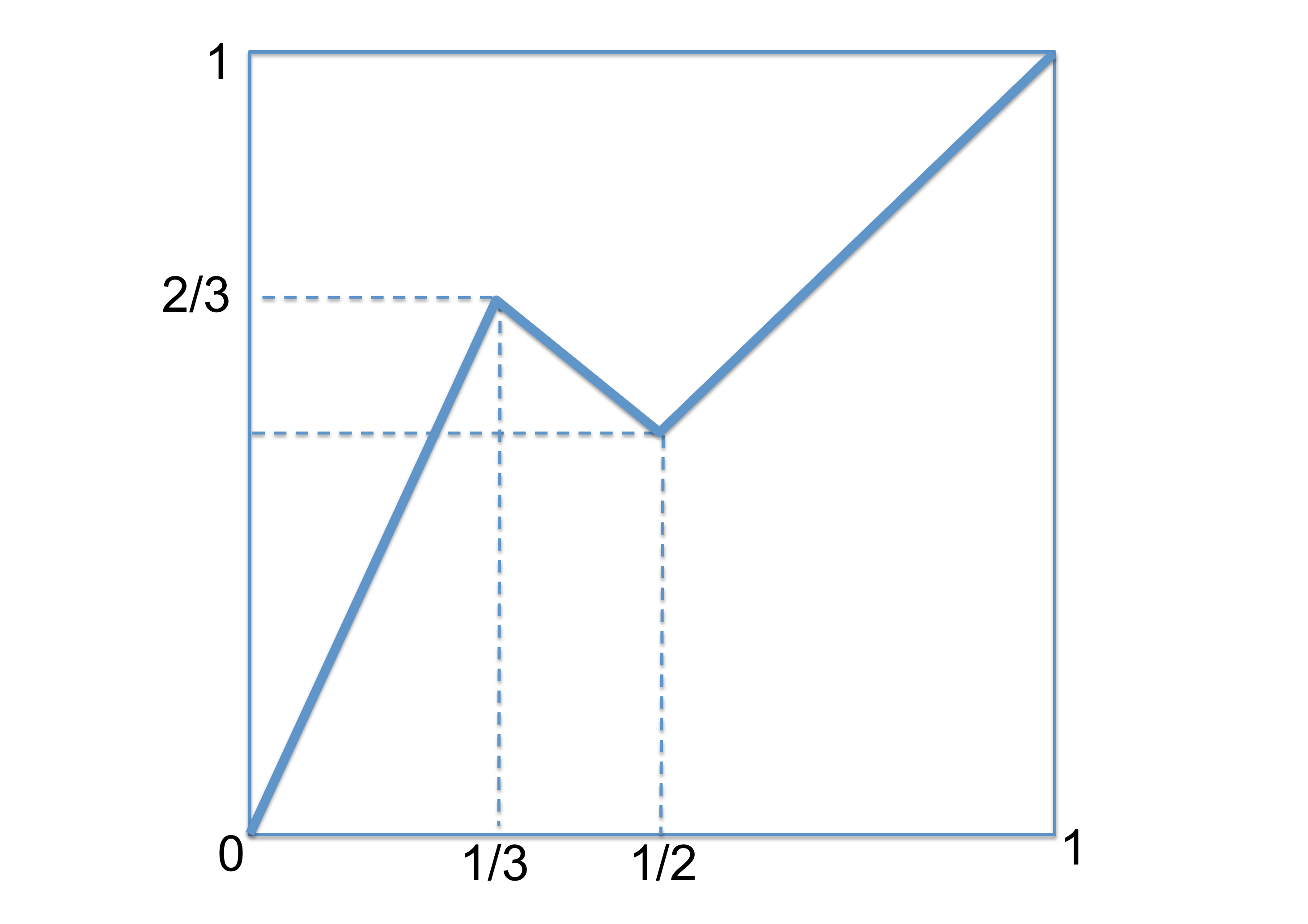}    
    \end{center} 
     \caption{\small The  $\omega^{-1}$-image 
     of the Murray-von Neumann
 equivalence class   
 $[r]=(((([\mathsf p]\oplus [\mathsf p])^*
 \oplus [\mathsf p])\oplus([\mathsf p]\oplus [\mathsf p])^*)^*\oplus([\mathsf p]\oplus [\mathsf p])^*)^*\in E(\mathfrak M_1)$. 
}                                      
 \label{figure:mcn}                                                 
   \end{figure}

 \medskip

\begin{definition}
\label{definition:word}
The  {\it word problem} of 
 $\mathfrak M_1$ is as follows:
 
 \smallskip
 \noindent
 ${\mathsf{INSTANCE}:}$ Formulas  $\phi$ and $\psi$ in the
 language of $0,X,^*,\oplus$.
 
  \smallskip
 \noindent
 ${\mathsf{QUESTION:}}$  Are   $\omega({\mathsf f}_\phi)$ and
 $\omega({\mathsf f}_\psi)$ the same equivalence class of
 projections of $\mathfrak M_1$?
 \end{definition}

 Should the symbol $X$ be interpreted as
 the other generator of   $E(\mathfrak M_1)$, 
 Theorem \ref{theorem:generator} ensures that
 every question in the resulting
 word problem has precisely the same answer
 as in the original word problem defined above.
 
 \medskip
 For any formula  $\phi$
in the language
 $0,X,^*,\oplus$ 
 we let
$ ||\phi|| =$
  the number of occurrences of symbols in  
$ \phi.$
 
\smallskip
 \begin{theorem}
 \label{theorem:polytime-emmone}
   The word problem of 
 $\mathfrak M_1$ is solvable in polynomial time.
In other words, there is
  a polynomial $T\colon\{0,1,2,\dots\}\to\{0,1,2,\dots\}$ 
  and a Turing machine  which, having in input
 any pair of formulas $(\phi,\psi)$,  decides
 whether they code the same equivalence class of
 projections of $\mathfrak M_1$, 
in  a number of steps
 $< T( ||\phi||+ ||\psi||).$ 
 \end{theorem}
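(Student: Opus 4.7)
The plan is to reduce the word problem to an equality check between the two one-variable McNaughton functions ${\mathsf f}_\phi$ and ${\mathsf f}_\psi$ coded by the inputs, and then to settle that equality by sampling both functions on a polynomial-size set of rational points. Since $\omega$ in \eqref{equation:omega} is an isomorphism, $\omega({\mathsf f}_\phi)=\omega({\mathsf f}_\psi)$ in $E(\mathfrak M_1)$ if and only if ${\mathsf f}_\phi={\mathsf f}_\psi$ in $\McNone$. So the task is to decide equality of two continuous piecewise-linear integer-coefficient functions on $\interval$ presented by formulas.

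The key structural observation controls slopes and breakpoints of McNaughton functions coded by formulas. Writing $s(\phi)$ for the maximum absolute value of a slope of a linear piece of ${\mathsf f}_\phi$, an induction on $\phi$ yields $s(0)=0$, $s(X)=1$, $s(\phi^*)=s(\phi)$, and $s(\phi\oplus\psi)\le s(\phi)+s(\psi)$, since the cap $\min(1,\cdot)$ never increases the absolute slope. Thus $s(\phi)$ is at most the number of occurrences of $X$ in $\phi$, and in particular $s(\phi)\le ||\phi||$. Next, at any interior breakpoint $x_0$ where the adjacent integer-coefficient linear pieces are $a_1x+b_1$ and $a_2x+b_2$ with $a_1\ne a_2$, continuity forces $x_0=(b_2-b_1)/(a_1-a_2)$, a rational with denominator at most $|a_1-a_2|\le 2s(\phi)\le 2||\phi||$. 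Consequently every breakpoint of ${\mathsf f}_\phi$ and of ${\mathsf f}_\psi$ lies in the set $\mathcal F_N=\{p/q\in\interval : 1\le q\le N\}$ of Farey fractions of order $N=2\max(||\phi||,||\psi||)$, and between consecutive elements of $\mathcal F_N$ both functions are linear.

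The algorithm is then: enumerate the $O(N^2)=O((||\phi||+||\psi||)^2)$ elements of $\mathcal F_N$, and for each $r=p/q\in\mathcal F_N$ evaluate ${\mathsf f}_\phi(r)$ and ${\mathsf f}_\psi(r)$ by a recursive descent through the parse tree of the corresponding formula, which exists and is unique by unique readability. At each node the computation carries a rational value in $\interval$ whose denominator divides $q$, so every intermediate numerator has bit-length $O(\log N)$, and each of the $O(||\phi|| + ||\psi||)$ nodes requires only one rational addition or complement followed by a cap at $1$. A single evaluation therefore runs in time polynomial in $||\phi||+||\psi||$. Answer YES iff every comparison agrees. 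Soundness and completeness follow immediately from the previous paragraph: two continuous functions that are linear on every interval between consecutive Farey fractions in $\mathcal F_N$ and agree at every point of $\mathcal F_N$ must be equal on all of $\interval$. Summing the costs gives a polynomial $T$ with the required bound.

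The main delicate point is the slope bound $s(\phi)\le||\phi||$ together with its corollary that every breakpoint of ${\mathsf f}_\phi$ has denominator $\le 2||\phi||$; once this uniform polynomial control is established, all remaining estimates reduce to routine bookkeeping on rational arithmetic, so the polynomial running time falls out.
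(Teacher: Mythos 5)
Your proposal is correct and follows essentially the same route as the paper: reduce via $\omega$ to equality of McNaughton functions, prove by induction on formulas that slopes are bounded by formula length, conclude that the relevant rational points have denominator polynomially bounded in $||\phi||+||\psi||$, and decide by evaluating at those polynomially many small-denominator rationals in polynomial time. The only cosmetic difference is that the paper packages the comparison into the single function $h=|{\mathsf f}_\phi-{\mathsf f}_\psi|$ coded by $((\phi^*\oplus \psi)^*\oplus(\psi^*\oplus \phi)^*)$ and locates a nonzero value at a breakpoint of $h$, whereas you check agreement of the two functions on all Farey fractions of order $2\max(||\phi||,||\psi||)$ and invoke linearity between consecutive Farey points.
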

 
 \begin{proof} In view of the isomorphism 
 \eqref{equation:omega},  we
 may replace  $E(\mathfrak M_1)$ by
 $\McNone$ and focus on
 the complexity of  the equivalent  problem
 of checking 
 whether the McNaughton functions ${\mathsf f}_\phi$ 
 and ${\mathsf f}_\psi$
 coincide.
We first make the following:

 \smallskip
 \noindent
 {\it Claim 1:} Let $\rho$ be a formula.
For  all $0<z<1$ both  one-sided derivatives 
$\partial {\mathsf f}_\rho(z)/\partial x^\pm$  
  (exist and) satisfy
$
|\partial {\mathsf f}_\rho(z)/\partial x^\pm|\leq 
||\rho||.
$

\smallskip
Existence immediately follows
from  piecewise linearity.
Since every linear piece of every
  $g\in \McNone$  has integer coefficients, 
   both  one-sided derivatives of $g$ at $z$ 
 have integer values.  For the formulas  $0$ and $X$
 the claim is trivially true.
Proceeding inductively on the number of
operation symbols in  subformulas  $\sigma$ of $\rho$,
we can write:
$$
\left|\frac{\partial {\mathsf f}_{\sigma^*}(z)}{\partial x^+}\right|
=
\left|\frac{\partial {\mathsf f}_{\sigma}(z)}{\partial x^+}\right|
\leq ||\sigma|| < ||\sigma^*||\leq ||\rho||. 
$$
On the other hand, whenever $\sigma$ has the form
$(\alpha\oplus\beta)$, 
we can write:

\begin{equation}
\label{equation:cases}
\frac{\partial {\mathsf f}_\sigma(z)}{\partial x^+}=
\frac{\partial {\mathsf f}_\alpha(z)}{\partial x^+}  +
\frac{\partial {\mathsf f}_\beta(z)}{\partial x^+},
\end{equation}
unless   ${\mathsf f}_\alpha(z)\oplus{\mathsf f}_\beta(z)=1$,
 in which case
\begin{equation}
\label{equation:cases-bis}
\frac{\partial {\mathsf f}_\sigma(z)}{\partial x^+}=
\min\left(0,\,\,\,
\frac{\partial {\mathsf f}_\alpha(z)}{\partial x^+}  +
\frac{\partial {\mathsf f}_\beta(z)}{\partial x^+}\right).
\end{equation}
In conclusion, 
$$
\left|\frac{\partial {\mathsf f}_{\sigma}(z)}
{\partial x^+}\right|
\leq 
\left|\frac{\partial {\mathsf f}_{\alpha}(z)}{\partial x^+}
+
\frac{\partial {\mathsf f}_{\beta}(z)}{\partial x^+}\right|
\leq 
\left|\frac{\partial {\mathsf f}_{\alpha}(z)}{\partial x^+}\right|
+
\left|\frac{\partial {\mathsf f}_{\beta}(z)}{\partial x^+}\right|
\leq ||\alpha||+||\beta||< ||\sigma||\leq ||\rho||.
$$
A similar result holds for 
$\partial {\mathsf f}_\sigma(z)/\partial x^-$.
Our  first claim is settled.

\bigskip
 \noindent
 {\it Claim 2:}
 Let  
$\phi$ and $\psi$ be formulas. If  ${\mathsf f}_\phi\not= {\mathsf f}_\psi$, there is
 a rational $r =c/d$ such that ${\mathsf f}_\phi(r)
 \not= {\mathsf f}_\psi(r)$ and   
$0 < d \leq 14(||\phi||+||\psi||).$

\smallskip
As a matter of fact, let $h=|{\mathsf f}_\phi-{\mathsf f}_\psi|.$  
Then a tedious but straightforward
calculation (\cite[1.2.4]{cigdotmun}) yields
\begin{equation}
\label{equation:distance}
h  = ({\mathsf f}_\phi^*\oplus {\mathsf f}_\psi)^*  \oplus ({\mathsf f}_\psi^*\oplus {\mathsf f}_\phi)^*=
{\mathsf f}_{((\phi^*\oplus \psi)^*  \oplus (\psi^*\oplus \phi)^*)}
\end{equation}
showing that  $h$  is a member of $\McNone.$
If $h$ is the constant 1 we are done.
Otherwise,  the maximum value of $h$ 
is  attained at some  point 
$r \in \interval$ of $h$  which is
the unique solution of
a linear equation
$d'x+c'=d''x+c''$ with integer coefficients $c',c'',d',d''.$
(If  $r\in \{0,1\}$ the proof is trivial.)
The denominator $d$
of $r$ satisfies  $0< d\leq |d'-d''|\leq |d'|+|d''|.$ 
By definition, 
$|d'|, |d''|\leq |\partial h(r)/\partial x^+|.$
By Claim 1, 
$$
d \leq |d'|+|d''| \leq 
2\left|
\frac{\partial h(r)}{\partial x^+}\right|  \leq  2||((\phi^*\oplus\psi)^*
\oplus (\psi^*\oplus\phi)^*)||. 
$$
Now 
$$
2||(\phi^*\oplus\psi)^*
\oplus (\psi^*\oplus\phi)^*||=2(2(||\phi||+||\psi||)+10)
\leq2(2(||\phi||+||\psi||)+5((||\phi||+||\psi||)) 
$$
which settles our second claim.

 \smallskip
To conclude,  it suffices to prove:

\bigskip
 \noindent
 {\it Claim 3:}
The problem whether
for some rational 
$r \in \interval$ with
denominator  
$0< d \leq 14 (||\phi||+||\psi||)$
the function $h$ vanishes at $r$,
has polytime complexity with respect to 
$||\phi||+||\psi||$.
%
%
 
 \smallskip
As a matter of fact, by 
  Claim 2, the  number $N$ of
these rational numbers $r\in \interval$  
satisfies $N< (14  (||\phi||+||\psi||))^2$.
By \eqref{equation:distance},
for any such $r$, 
we must compute the value 
${\mathsf f}_\sigma(r)$ for all  
 subformulas  $\sigma$
of $((\phi^*\oplus \psi)^*  \oplus (\psi^*\oplus \phi)^*)$.
This is done  proceeding by induction
on the number of operation symbols in $\sigma$.
The number of these subformulas is bounded above by
a linear function of $||\phi||+||\psi||.$
Since both  $^*$ and $\oplus$
are polytime operations, it easily follows that    
computing the value ${\mathsf f}_\sigma(r)$
takes polynomial time in $||\phi||+||\psi||,$
whence so does the computation of $h(r).$

\smallskip
Having thus settled Claim 3, we have completed
the  proof that
the word problem
of $\mathfrak M_1$ has polytime complexity.
 \end{proof}

 \section{The word problem of Behnke-Leptin primitive
 quotients of $\mathfrak M_1$}
The freeness properties of the
Farey algebra $\mathfrak M_1$  
yield a natural definition of the word problem
of every  AF$\ell$ algebra   $\mathfrak Q$   
whose Elliott involutive monoid
  $E(\mathfrak Q)$ is singly generated.
Since
  $E(\mathfrak Q)$  is the quotient of  the free algebra
$E(\mathfrak M_1)$ by some ideal  
  \,\,$\mathfrak i$ \,\,of $E(\mathfrak M_1)$, then
by  Lemma \ref{lemma:lattice-quotient},
  $\mathfrak Q$ is  
the quotient of $\mathfrak M_1$ by the
ideal $\mathfrak I$ corresponding to $\mathfrak i$
via the functors   $K_0$
and  $\Gamma$. 
    Once   $E(\mathfrak M_1)$ is equipped with the
  generator $[\mathsf p]$ introduced in Proposition 
\ref{proposition:main-generator}(iii), 
   $E(\mathfrak Q)$ is automatically endowed with the
   generator $ [\mathsf p/\mathfrak I]\in E(\mathfrak M_1/\mathfrak I)$.

\smallskip 
 The {\it word problem} of $\mathfrak Q$ asks
whether  two arbitrary formulas
$\phi,\psi$ in the language
$0,X,^*,\oplus$  code  the same equivalence class of
projections 
in  $\mathfrak Q$, 
with  the symbol $X$ coding 
the generator 
$[\mathsf p/\mathfrak I]$  of    the involutive
monoid  
$E(\mathfrak M_1/\mathfrak I)\cong
E(\mathfrak M_1)/\mathfrak i=
E(\mathfrak Q)$.
When $\mathfrak Q=\mathfrak M_1$ we recover
Definition \ref{definition:word}.

\begin{figure} 
    \begin{center}                                     
    \includegraphics[height=8.9cm]{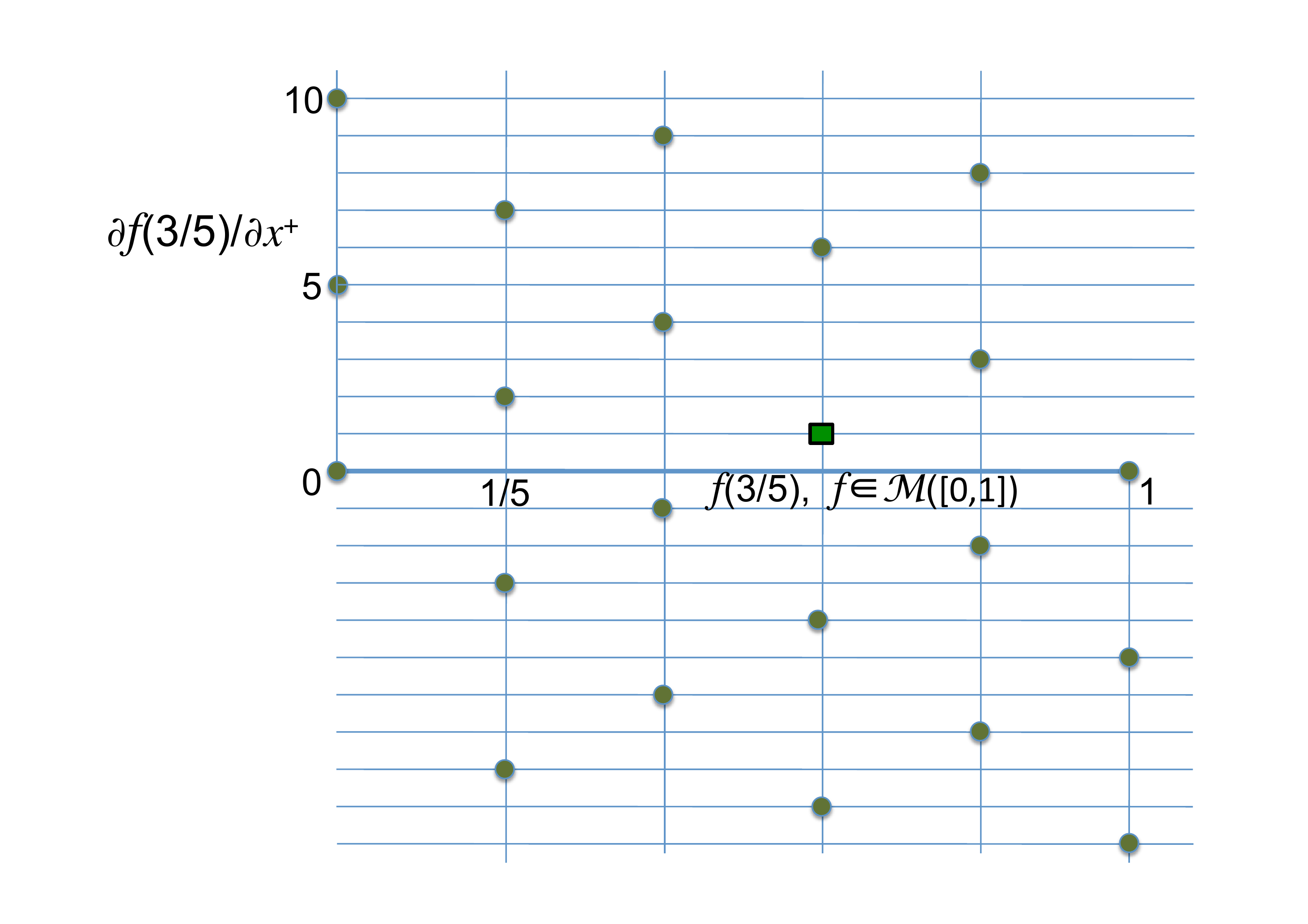}    
    \end{center}                                       
 \caption{\small Elements   of the
  Elliott  involutive monoid  
  of the Behnke-Leptin algebra   
 $\mathcal A_{3,5}= \mathfrak M_1/\mathfrak J^+_{3/5}.$
 Each point stands for the germ
  at $3/5$  of some  function
$f\in \McNone$.
The abscissa plots  the possible values $f(3/5)$,
as  $f$ ranges over all
 McNaughton functions.
Correspondingly, the ordinate  plots
the  possible values of $\partial f(3/5)/\partial x^+$.
   The  point  $(3/5,1)$
is  the generator of 
  $E(\mathcal A_{3,5})$
corresponding to  $\ide/\mathfrak j^+_{3/5}$.
With reference to  Corollary  \ref{corollary:K0-list}(ii)-(iii),
the map
$(x,y)\mapsto (5x, (y-10x)/5)$ is an isomorphism
of   $E(\mathcal A_{3,5})$  onto  
$\Gamma(\mathbb Z \lex \mathbb Z,(5,-2))$.}
    \label{figure:bl}                                                 
   \end{figure}

\smallskip
In \cite[pp.36-37]{ell-lnm} Elliott constructs the local
semigroup of the Behnke-Leptin (unital) separable
AF algebra
$\mathcal A_{0,1}$   with a two point dual,  \cite{behlep}.  Since
$\mathcal A_{0,1}$  has comparability in the sense of
Murray-von Neumann, 
$\mathcal A_{0,1}$   is an AF$\ell$ algebra.
Further, 
Elliott's analysis in \cite{ell-lnm} shows that the involutive
monoid
$E(\mathcal A_{0,1})$   is singly generated.
We then have:

 \begin{theorem}
 \label{theorem:polytime-bl} 
 The word problem
 of the    Behnke-Leptin C*-algebra  $\mathcal A_{0,1}$ 
 has polynomial time complexity. 
 \end{theorem}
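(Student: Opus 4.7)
The plan is as follows. By Corollary \ref{corollary:primitive-quotients}(iii) together with Lemma \ref{lemma:lattice-quotient}(iv) and the isomorphism $\omega$ of \eqref{equation:omega}--\eqref{equation:omega-quotient}, the Elliott involutive monoid $E(\mathcal A_{0,1})$ is isomorphic to $\McNone/\mathfrak j^+_0$, the symbol $X$ in the word problem coding the class of $\ide$. Hence the word problem of $\mathcal A_{0,1}$ reduces to deciding, for given formulas $\phi$ and $\psi$, whether ${\mathsf f}_\phi$ and ${\mathsf f}_\psi$ represent the same class modulo $\mathfrak j^+_0$.

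The core of the plan is the \emph{germ-at-zero map}
\begin{equation*}
  \varepsilon\colon f/\mathfrak j^+_0 \;\longmapsto\; \bigl(f(0),\,\partial f(0)/\partial x^+\bigr),
\end{equation*}
which I claim is an MV-algebra isomorphism of $\McNone/\mathfrak j^+_0$ onto $\Gamma(\mathbb Z\lex\mathbb Z,(1,0))$ sending the class of $\ide$ to the generator $(0,1)$. Well-definedness and injectivity follow from Theorem \ref{theorem:robinson-apal}(ii), which identifies $\mathfrak j^+_0$ as precisely the kernel of $\varepsilon$; surjectivity follows from Corollary \ref{corollary:K0-list}(ii), or directly by exhibiting the representatives $m\cdot\ide$ and $1-m\cdot\ide$ for the points $(0,m)$ and $(1,-m)$ with $m\geq 0$. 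Granting this identification, the word problem becomes the task of computing and comparing the pairs $\varepsilon({\mathsf f}_\phi)$ and $\varepsilon({\mathsf f}_\psi)$. I would compute, by structural induction on any subformula $\sigma$, the pair $(v_\sigma,d_\sigma):=({\mathsf f}_\sigma(0),\,\partial{\mathsf f}_\sigma(0)/\partial x^+)$ via the clauses $(v_0,d_0)=(0,0)$, $(v_X,d_X)=(0,1)$, $(v_{\sigma^*},d_{\sigma^*})=(1-v_\sigma,-d_\sigma)$, and, mirroring \eqref{equation:cases}--\eqref{equation:cases-bis}, $(v_{\alpha\oplus\beta},d_{\alpha\oplus\beta})=(v_\alpha+v_\beta,\,d_\alpha+d_\beta)$ when $v_\alpha+v_\beta<1$, else $(1,\min(0,d_\alpha+d_\beta))$.

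The one-sided analogue at $z=0^+$ of Claim 1 in the proof of Theorem \ref{theorem:polytime-emmone} gives $|d_\sigma|\leq\|\sigma\|$, and of course $v_\sigma\in\{0,1\}$. Consequently each of the $O(\|\phi\|)$ recursive steps involves a constant number of arithmetic operations on integers of bit-length $O(\log\|\phi\|)$, so $\varepsilon({\mathsf f}_\phi)$ is computable in time polynomial in $\|\phi\|$; the same applies to $\psi$, and the final comparison of two such pairs is immediate. The main obstacle is the purely algebraic step, namely verifying that $\varepsilon$ is an isomorphism onto $\Gamma(\mathbb Z\lex\mathbb Z,(1,0))$; once this germ-at-zero description of $\McNone/\mathfrak j^+_0$ is in place, the polynomial-time bound is essentially forced by the local nature of $\varepsilon$ together with the derivative bound from Claim 1 in Theorem \ref{theorem:polytime-emmone}.
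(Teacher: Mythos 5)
Your route is essentially the paper's: reduce via $\omega$ and Corollary \ref{corollary:primitive-quotients} to $\McNone/\mathfrak j_0^+$, observe that two McNaughton functions agree modulo $\mathfrak j_0^+$ iff they have the same germ at $0$, i.e.\ the same pair $\bigl(f(0),\partial f(0)/\partial x^+\bigr)$, and compute that pair by structural induction using the derivative bound of Claim 1 of Theorem \ref{theorem:polytime-emmone}. Your extra packaging of the germ map as an isomorphism onto $\Gamma(\mathbb Z\lex\mathbb Z,(1,0))$ is correct and consistent with Corollary \ref{corollary:K0-list}(ii), but only the injectivity of the germ invariant is actually needed, and that is exactly what the paper uses.

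There is, however, one concrete error in your recursion for $\oplus$: the clause ``else $(1,\min(0,d_\alpha+d_\beta))$'' conflates the case $v_\alpha+v_\beta=1$ with the case $v_\alpha=v_\beta=1$. In the latter case ${\mathsf f}_\alpha+{\mathsf f}_\beta$ equals $2+(d_\alpha+d_\beta)x$ near $0$, hence stays strictly above $1$ on a right neighborhood of $0$, so ${\mathsf f}_{(\alpha\oplus\beta)}$ is constantly $1$ there and the correct germ is $(1,0)$, not $(1,\min(0,d_\alpha+d_\beta))$. Concretely, for $\phi=(X^*\oplus X^*)$ and $\psi=0^*$ both functions equal $1$ on $[0,1/2]$, so they code the same element of $E(\mathcal A_{0,1})$, yet your recursion returns $(1,-2)$ for $\phi$ and $(1,0)$ for $\psi$ and would answer ``different''; dually, it would wrongly identify $(X^*\oplus X^*)$ with $(X\oplus X)^*$, whose germ really is $(1,-2)$. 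The fix is a three-way split ($v_\alpha+v_\beta=0$, $=1$, $=2$, the last giving derivative $0$), after which your complexity analysis goes through unchanged. Note that the paper's displayed formula \eqref{equation:cases-bis} is used in Claim 1 only to bound absolute values (where the discrepancy is harmless) and is invoked for exact evaluation only under an explicit ``arguing by cases''; for computing germs exactly, as you need here, the case $ {\mathsf f}_\alpha(0)+{\mathsf f}_\beta(0)>1$ must be treated separately.
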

 
 \begin{proof}  
As shown in   \cite[p. 36]{ell-lnm},\,\,\,
  $K_0(\mathcal A_{0,1})$
is the lexicographic product  $\mathbb Z +_{\rm lex} \mathbb Z$ 
of two copies of $\mathbb Z$, and the image
 of the unit element of $\mathcal A_{0,1}$ in $K_0$ is the
order unit $(1,0)$ of $\mathbb Z +_{\rm lex} \mathbb Z$.
By  \eqref{equation:omega}, we may 
  argue for $\McNone$ instead of 
 its isomorphic copy
$E(\mathfrak M_1)$.
By  Theorem \ref{theorem:multi}(v) and
Corollaries
\ref{corollary:ell-ideals}(ii)-\ref{corollary:primitive-quotients}(ii),
 $E(\mathcal A_{0,1})$ 
  is (isomorphic to) the quotient of   
  $\McNone$
 by the ideal $\mathfrak j_0^+$
  of all McNaughton 
  functions that vanish on a right open neighborhood of 0.   
Two elements  $f,g\in \McNone$ are mapped
 into the same element of $E(\mathcal A_{0,1})$
 by the quotient map
  $
 l\in  \McNone \mapsto l/\mathfrak j_0^+\in 
  \McNone/\mathfrak j_0^+$
    iff they have equal value
 in an open right neighborhood of 0.
 By \eqref{equation:omega-quotient}, the
 image
under the quotient map 
 of the generator $[\mathsf p]$  of  
 $E(\mathfrak M_1)$ 
may be  identified with 
 the element ${\rm id}_{\interval}/\mathfrak j_0^+$
 of $  \McNone/\mathfrak j_0^+$
where, as the reader will recall,
${\rm id}_{\interval}$ denotes the identity function
 over $\interval.$  The element 
 ${\rm id}_{\interval}/\mathfrak j_0^+$  
 is known as  the {\it germ} of $\ide$ at 0.
   Every $f\in \McNone$ is similarly transformed 
   by the quotient map 
 into its germ at 0.  Piecewise linearity ensures that
any   $f,g\in \McNone$
 are identified by the quotient map
  iff they have the same germ at 0  
 iff they coincide on some open right neighborhood of 0,
 iff 
 $$
 f(0)=g(0)\,\,\, \mbox{ and }\,\,\,
 \frac{\partial f(0)}{\partial x^+}=
  \frac{\partial  g(0)}{\partial x^+}.
 $$
 (Compare with  the proof of
 Claim 1 in Theorem   \ref{theorem:polytime-emmone}.)

Thus to evaluate  the complexity of the 
 problem  whether 
two formulas  $\phi,\psi$ 
satisfy  ${\mathsf f}_\phi/\mathfrak j_0^+
={\mathsf f}_\psi/\mathfrak j_0^+$ in
$\McNone/\mathfrak j_0^+$, we consider the
equivalent problem of deciding  whether
${\mathsf f}_\phi$ and ${\mathsf f}_\psi$
have the same value at 0
and the same partial derivative at 0 along $x^+.$

We will prove the existence of a polynomial $P$
such that  both problems are decidable in less
than $P(||\phi||+||\psi||)$ steps, for any $\phi$ and $\psi$.
We proceed by induction on the number of
operation symbols in subformulas $\sigma$ of  
  $\phi$ and $\psi$.

  Since 
  ${\mathsf f}_\sigma$ is piecewise linear with integer
  coefficients, then   ${\mathsf f}_\sigma(0)\in \{0,1\}.$
  There are only polynomially
 few subformulas $\sigma$ of $\phi$ and $\psi$.
 Since both  $^*$ and $\oplus$
are polytime operations, it easily follows that    
the computation of the value 
 ${\mathsf f}_\sigma(0)$
 takes polynomial time in $||\phi||+||\psi||$.
Thus the problem whether
 ${\mathsf f}_\phi(0)={\mathsf f}_\psi(0)$  is polytime.
 
By   induction on the number of operation symbols, we
next evaluate the one-sided derivatives
 $$
 \frac{\partial {\mathsf f}_\phi(0)}{\partial x^+}
 \,\,\, \mbox{ and }\,\,\,
  \frac{\partial {\mathsf f}_\psi(0)}{\partial x^+} 
 $$
 as follows:
 First of all,  the derivative of the generator $\ide={\mathsf f}_X$ 
 of $\McNone$  is equal to 1, and the derivative of the zero
 element  ${\mathsf f}_0$  is 0.
 Proceeding inductively, whenever 
 the operation $^*$ is applied to a subformula  $\sigma$
 to yield $\sigma^*$, the one-sided derivative
 flips from  $d$ to $-d.$ Whenever a subformula $\sigma$
 arises as  $(\alpha\oplus \beta)$,  the one-sided
 derivatives of $\sigma$ are
 evaluated arguing by cases and using
 \eqref{equation:cases}-\eqref{equation:cases-bis}
  in the proof
 of Claim 1 in Theorem
  \ref{theorem:polytime-emmone}.
All these derivatives have integer values 
$\leq\max(||\phi||,||\psi||)$, which can
be computed in polynomial time. Thus the computation
of   the one-sided derivatives of ${\mathsf f}_\phi$ and
${\mathsf f}_\psi$ 
at 0
 requires polynomial time  in $||\phi||+||\psi||$.

We have just shown the existence of the desired
 polynomial $P$
such that 
 in less
than $P(||\phi||+||\psi||)$ steps it is decidable if
two formulas  $\phi,\psi$ 
satisfy  ${\mathsf f}_\phi(0)/\mathfrak j_0^+
={\mathsf f}_\psi(0)/\mathfrak j_0^+$.

As a consequence,  the word problem of
  $\mathcal A_{0,1}$ has polytime complexity. 
 \end{proof}
 
\begin{remark}
   $E(\mathcal A_{0,1})$ is
 isomorphic to  the MV  algebra  C  introduced by Chang in 
\cite[p.474]{cha58}. 
\end{remark}

   \begin{corollary}
   \label{corollary:polytime-other-bl}
The word problems of 
the  Behnke-Leptin algebras   
  $\mathfrak M_1/\mathfrak J^+_{m/n}$
  and $\mathfrak M_1/\mathfrak J^-_{m/n}$
have polynomial time complexity. 
(Notation of Corollary   
\ref{corollary:primitive-quotients}.) 
   \end{corollary}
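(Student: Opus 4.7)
The plan is to adapt the germ computation from Theorem \ref{theorem:polytime-bl} almost verbatim, replacing the base point $0$ by the rational $m/n$. Fix $0<m/n<1$ in lowest terms. Combining Corollary \ref{corollary:ell-ideals}(iv), Corollary \ref{corollary:primitive-quotients}(iv), and the isomorphism \eqref{equation:omega-quotient}, the word problem of $\mathfrak M_{1}/\mathfrak J^{+}_{m/n}$ is equivalent under $\omega$ to deciding whether two McNaughton functions ${\mathsf f}_\phi,{\mathsf f}_\psi$ are congruent modulo
$\mathfrak j^{+}_{m/n}=\bigl\{f\in\McNone\mid f(m/n)=0,\ \partial f(m/n)/\partial x^{+}=0\bigr\}.$
Since every element of $\McNone$ is piecewise linear with integer coefficients, the difference ${\mathsf f}_\phi-{\mathsf f}_\psi$ lies in $\mathfrak j^{+}_{m/n}$ exactly when ${\mathsf f}_\phi$ and ${\mathsf f}_\psi$ share both their value and their right derivative at $m/n$.

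Accordingly, I would compute, by induction along the syntax tree of each input formula $\rho\in\{\phi,\psi\}$, the ordered pair $\bigl({\mathsf f}_\sigma(m/n),\ \partial {\mathsf f}_\sigma(m/n)/\partial x^{+}\bigr)$ for every subformula $\sigma$ of $\rho$. The base cases give $(0,0)$ for $\sigma=0$ and $(m/n,1)$ for $\sigma=X$; the unary step sends $(a,b)\mapsto (1-a,-b)$; and the binary step $\sigma=(\alpha\oplus\beta)$ is handled by the case split \eqref{equation:cases}--\eqref{equation:cases-bis}, according to whether the value sum ${\mathsf f}_\alpha(m/n)+{\mathsf f}_\beta(m/n)$ does or does not saturate at $1$. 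Two formulas are then congruent modulo $\mathfrak j^{+}_{m/n}$ iff the two top-level pairs coincide.

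It remains to verify the polynomial bound. Each first coordinate is a rational in $\{0,1/n,\ldots,1\}$; since $n$ is a fixed constant of the algebra, such values occupy $O(1)$ bits and all the required arithmetic on them is $O(1)$. Each second coordinate is an integer whose absolute value is at most $\|\sigma\|\le\|\rho\|$, by Claim 1 in the proof of Theorem \ref{theorem:polytime-emmone}, hence of bit-length $O(\log\|\rho\|)$. The number of subformulas of $\rho$ is at most $\|\rho\|$, and each inductive step performs a bounded number of additions, subtractions, comparisons, and sign flips on quantities of this size. Summing over all subformulas yields a total running time polynomial in $\|\phi\|+\|\psi\|$.

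The case $\mathfrak M_{1}/\mathfrak J^{-}_{m/n}$ is fully symmetric: invoke Corollary \ref{corollary:primitive-quotients}(v) in place of (iv) and replace the right derivative $\partial/\partial x^{+}$ throughout by the left derivative $\partial/\partial x^{-}$; the inductive bookkeeping and the complexity estimate carry over unchanged. The only genuinely delicate point is the correct propagation of the one-sided derivative through those $\oplus$-nodes where the clamp $\min(1,\cdot)$ is active, but this is precisely the issue already settled by \eqref{equation:cases}--\eqref{equation:cases-bis} in Theorems \ref{theorem:polytime-emmone} and \ref{theorem:polytime-bl}, so no new obstacle arises.
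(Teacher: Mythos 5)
Your proposal is correct and follows essentially the same route as the paper, which proves this corollary as ``a routine variant'' of Theorem \ref{theorem:polytime-bl} by computing the value of ${\mathsf f}_\psi$ and its one-sided derivatives at $m/n$ instead of at $0$. Your observation that only the right (resp.\ left) derivative is needed for $\mathfrak J^{+}_{m/n}$ (resp.\ $\mathfrak J^{-}_{m/n}$) is a correct sharpening of the paper's phrasing, and the complexity bookkeeping matches the paper's argument.
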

   
   \begin{proof} A routine variant of the
   proof of Theorem \ref{theorem:polytime-bl}.
   For every formula $\psi$,
    one has now to compute the value of ${\mathsf f}_\psi$ and
  both its one-sided derivatives at $m/n$.
   \end{proof}

 Figure \ref{figure:bl} shows  the Elliott involutive
 monoid of a typical Behnke-Leptin algebra 
 $\mathfrak M_1/\mathfrak J^+_{m/n}$.
 
\medskip

 \section{The word problem of Effros-Shen primitive
 quotients of $\mathfrak M_1$}
 
 Following  \cite[p. 65]{eff}, 
  for  each irrational $\theta
 \in [0,1]$ the Effros-Shen  AF algebra $\mathfrak F_\theta$ 
is defined by  
$
K_0(\mathfrak F_\theta) 
\cong (\mathbb Z+\mathbb Z\theta,1),
$
the latter denoting the additive subgroup of $\mathbb R$ 
generated by $\theta$ and 1, with the natural ordering and 
1 as the  unit.

It is well known that Effros-Shen algebras are related to
continued fractions, \cite{bratteli1, bratteli2}, \cite[\S 5]{eff}. 
The elementary theory of diophantine approximation
is also  a key tool for our results on
the word problem of $\mathfrak F_\theta$.
 
\smallskip
We prepare:

 \begin{lemma}
 \label{lemma:minkowski}
 If  $n\geq 2$ is even and  $h/k$ is a rational (with $k>0$)
satisfying  $p_n/q_n<h/k<p_{n+1}/q_{n+1}$ then 
$k>q_{n+1}>q_k.$
\end{lemma}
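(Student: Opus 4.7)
The plan is to reduce everything to the classical convergent identity
$p_{n+1}q_n - p_n q_{n+1} = (-1)^{n}$, which for $n$ even equals $+1$, together with the strict positivity of integer combinations coming from the two given strict inequalities. I read the inequality $q_{n+1}>q_k$ as $q_{n+1}>q_n$ (the denominator sequence of the convergents is strictly increasing for $n\geq 1$, since $q_{n+1}=a_{n+1}q_n+q_{n-1}$ with $a_{n+1}\geq 1$), so that part is immediate from the recurrence and need only be remarked.

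First I would clear denominators. The hypothesis $p_n/q_n<h/k<p_{n+1}/q_{n+1}$ (with all $q_i,k>0$) gives the two strict rational inequalities
\[
h q_n - p_n k > 0, \qquad p_{n+1} k - h q_{n+1} > 0.
\]
Both left-hand sides are integers, hence
\[
h q_n - p_n k \geq 1, \qquad p_{n+1} k - h q_{n+1} \geq 1.
\]

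Next I would form the linear combination that turns these two integer inequalities into a lower bound on $k$. Multiplying the first by $q_{n+1}$, the second by $q_n$, and adding, the mixed terms $h q_n q_{n+1}$ cancel, leaving
\[
q_{n+1}(h q_n - p_n k) + q_n (p_{n+1} k - h q_{n+1}) = k\bigl(p_{n+1} q_n - p_n q_{n+1}\bigr) = k,
\]
where the last equality uses evenness of $n$. Since each of the two summands on the left is at least $q_{n+1}$ and $q_n$ respectively, one obtains $k \geq q_n + q_{n+1} > q_{n+1}$, which is the main inequality. Combined with $q_{n+1}>q_n$ from the recurrence, this yields the chain asserted in the lemma.

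The only potential obstacle is the parity bookkeeping: the direction of the two strict inequalities must be compatible with the sign $(-1)^n$ so that both $h q_n - p_n k$ and $p_{n+1} k - h q_{n+1}$ come out positive (equivalently, so that $p_n/q_n$ is really the left convergent and $p_{n+1}/q_{n+1}$ the right one). This is exactly why the hypothesis is restricted to even $n\geq 2$: for even $n$ the convergent $p_n/q_n$ approximates from below and $p_{n+1}/q_{n+1}$ from above, making the positions of $p_n/q_n$ and $p_{n+1}/q_{n+1}$ on the real line match the hypothesis $p_n/q_n<h/k<p_{n+1}/q_{n+1}$ and making the determinant $p_{n+1}q_n-p_nq_{n+1}$ equal to $+1$. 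Once this is spelled out, the linear-combination identity does all the work in one line.
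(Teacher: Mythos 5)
Your proof is correct, and it is really the same argument as the paper's, just in algebraic rather than geometric dress. The paper invokes the unimodularity of the matrix with rows $(p_n,q_n)$ and $(p_{n+1},q_{n+1})$ (Lang, I, \S 1, Theorem 2), views $(h,k)$ as an integer point in the interior of the positive cone spanned by these two lattice basis vectors, and concludes that its (necessarily integer, necessarily positive) coordinates are each at least $1$, whence $k\geq q_n+q_{n+1}>q_{n+1}$. Your two positive integers $hq_n-p_nk$ and $p_{n+1}k-hq_{n+1}$ are exactly those coordinates, obtained by Cramer's rule via the determinant identity $p_{n+1}q_n-p_nq_{n+1}=(-1)^n=+1$; your linear combination reproduces the same bound $k\geq q_n+q_{n+1}$ in one line, without any appeal to lattice geometry, so it is slightly more self-contained. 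Your parity remark correctly explains the role of the evenness hypothesis, and your reading of the misprinted ``$q_k$'' as $q_n$ matches the paper's evident intent (its own last line ``$k\geq q_{n+1}+q_k$'' carries the same typo), with $q_{n+1}>q_n$ following from the recurrence exactly as you say.
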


\begin{proof}
By   \cite[I, \S 1, Theorem 2]{lan}, the
 integer 2$\times$2 matrix whose rows
are given by $(p_n,q_n)$ and $(p_{n+1},q_{n+1})$
is unimodular.
 Passing to homogeneous
integer coordinates in 
 $\mathbb Z^2$,  
let us consider the
integer vector $(h,k)$.
Our hypothesis about
$h/k$ lying in the interior of the interval
$[p_n/q_n, \,\,p_{n+1}/q_{n+1}]$
amounts to saying that  
 $(h,k)$ lies in the interior of the positive span  
 $\{\lambda(p_n,q_n)+\mu 
 (p_{n+1},q_{n+1})\in \mathbb R^2\mid
 0\leq \lambda,\mu\in \mathbb R\}$ of $(p_n,q_n)$ and 
 $(p_{n+1},q_{n+1})$ in $\mathbb R^2$.
 On the other hand,  $(h,k)$
 lies outside the half-open parallelogram
$\mathcal P= \{x\in \mathbb R^2\mid x= \lambda(p_n,q_n)
+\mu (p_{n+1},q_{n+1}), \,\,\,0 \leq \lambda<1, \,\,\,
0 \leq  \mu<1 \}$.  Indeed, 
 $\mathcal P$ does not contain any integer point other
 than the origin, because
  $\{(p_n,q_n),(p_{n+1},q_{n+1})\}$  is  a free generating
  set of the free abelian group  $\mathbb Z^2.$
  
 Thus  $k\geq q_{n+1}+q_k >q_{n+1}>q_k.$
 \end{proof}

From 
$q_n\geq 2q_{n-2}$ we get
$q_{2n} \geq 2^n q_0 =2^n$ and
$q_{2n+1}  \geq 2^n q_1 \geq 2^n$,
whence
$
q_n\geq 2^{{(n-1)}/{2}} \mbox{ for all } n = 2,3,4,\dots.
$
The following alternative 
estimates, however,  will simplify our calculations
in  Theorem \ref{theorem:polytime-ef-cf}, with no loss
in its content.

\begin{lemma}
\label{lemma:chiuen} 
Let  $p_n/q_n$ be the $n$th principal  convergent
of $\theta$.  We then have:

\begin{itemize}
\item[(i)]
$q_0=1\leq q_1$ and
$
q_n >(3/2)^{n-2}
\mbox{ for all } n\geq 2.
$

\smallskip
\item[(ii)]
For all  $m=1,2,\dots$ and  even $n>\log_2 8m$
we have $q_n>2m.$ 
\end{itemize} 
\end{lemma}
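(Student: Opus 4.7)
The plan is to exploit the standard recurrence for denominators of principal convergents. Writing $\theta=[a_0;a_1,a_2,\ldots]$ with every partial quotient $a_n\geq 1$ for $n\geq 1$, one has $q_{-1}=0$, $q_0=1$, and $q_n=a_n q_{n-1}+q_{n-2}$ for all $n\geq 1$. Two elementary consequences will carry the entire argument: (a)~$q_n\geq q_{n-1}+q_{n-2}$ for $n\geq 2$, and (b)~$q_n\geq 2q_{n-2}$ for $n\geq 2$, the latter following from (a) together with the easy monotonicity $q_{n-1}\geq q_{n-2}$, which itself is proved by a short induction from the recurrence.

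For (i), the inequality $q_0=1\leq a_1=q_1$ is immediate from the definition of $q_1$. For the exponential estimate, I first verify the two base cases by hand: $q_2\geq q_1+q_0\geq 2>1=(3/2)^0$ and $q_3\geq q_2+q_1\geq 3>3/2=(3/2)^1$. Then I proceed by strong induction on $n\geq 4$, combining (a) with the inductive hypothesis:
\begin{equation*}
q_n\;\geq\;q_{n-1}+q_{n-2}\;>\;(3/2)^{n-3}+(3/2)^{n-4}\;=\;(3/2)^{n-4}\cdot\tfrac{5}{2}\;>\;(3/2)^{n-4}\cdot\tfrac{9}{4}\;=\;(3/2)^{n-2},
\end{equation*}
the final strict inequality being simply $5/2>9/4$; this one algebraic fact drives the induction step.

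For (ii), I would leverage the even parity of $n$ to iterate (b) in steps of $2$ from index $n$ down to $0$, obtaining $q_n\geq 2^{n/2}$ for every even $n\geq 0$. The hypothesis $n>\log_2 8m$ combined with this exponential lower bound then yields $q_n>2m$ after a direct numerical comparison, which is the only piece of bookkeeping involved. No step is conceptually difficult: the whole lemma reduces to the sub-additive recurrence $q_n\geq q_{n-1}+q_{n-2}$ together with the choice of the right induction parameter $(3/2)^{n-2}$ in (i) and the doubling-per-two-steps argument in (ii). The main obstacle, if any, is purely algebraic, namely choosing the base $3/2$ small enough that $(3/2)+1>(3/2)^2$ so that the induction closes.
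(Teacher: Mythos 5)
Your part (i) is correct and is essentially the paper's own argument: from $q_n\geq q_{n-1}+q_{n-2}$ you check the base cases by hand and close the induction using exactly the inequality $3/2+1>(3/2)^2$, which is also the engine of the paper's proof.

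Part (ii), however, has a genuine gap. The doubling bound $q_n\geq 2^{n/2}$ (valid for even $n$) combined with the hypothesis $n>\log_2 8m$ only gives $q_n>\sqrt{8m}$, and $\sqrt{8m}>2m$ fails for every $m\geq 3$; equivalently, $2^{n/2}>2m$ needs $n>2+2\log_2 m$, which is strictly stronger than the stated hypothesis $n>3+\log_2 m$ once $m\geq 3$. Concretely, for $m=100$ and $n=10$ the hypothesis holds (since $\log_2 800\approx 9.64$), yet $2^{n/2}=32<200$, so your ``direct numerical comparison'' does not close. You should know that the paper's own deduction of (ii) from (i) has the same defect ($n>\log_2 8m$ does not imply $n>2+\log_2(2m)/\log_2(3/2)$ for large $m$), and in fact (ii) as literally stated fails: for $\theta=[0;1,1,1,\dots]$ the $q_n$ are Fibonacci numbers, so $q_{10}=89\leq 2m$ for $m=60$ although $10>\log_2 480$. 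What is actually used later, in Theorem~\ref{theorem:polytime-ef-cf}, is the case $n=n_\rho=2\lceil\log_2 8m\rceil$, i.e.\ $n\geq 2\log_2 8m$; under that hypothesis your approach not only works but is the cleanest possible, since then $q_n\geq 2^{n/2}\geq 8m>2m$ with no need of the $(3/2)^{n-2}$ estimate. So your method is salvageable once the hypothesis is strengthened to the form in which the lemma is applied, but as a proof of the statement as written, the last step fails.
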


\begin{proof} 
(i) 
We have $q_0=1$ by definition. 
Necessarily,   $q_1\geq 1.$
Thus
by   (\cite[I \S 1, Theorem 1]{lan}),
$q_2\geq  q_{1}+q_{0}>1=(3/2)^{2-2}.$
Proceeding inductively,  for all $n\geq 2.$
we have
$
q_n\geq  q_{n-1}+q_{n-2}>
(3/2)^{n-3}+(3/2)^{n-4}>
(3/2)^2\cdot (3/2)^{n-4}=(3/2)^{n-2}, 
$
as desired.

(ii) By 
(i), 
$n>\log_2 8m\,\,\,\Rightarrow\,\,\,
 n>2 + \frac{\log_2 2m}{\log_2 3/2}
\,\,\,\Rightarrow\,\,\, (3/2)^{n-2}> 2m
\,\,\,\Rightarrow\,\,\,
q_n>2m,$
\end{proof}

\medskip
\begin{lemma}
\label{lemma:claim3}
Let $a_0,a_1,\dots$ denote the partial quotients of $\theta.$
For each  $n=2,3,\dots$ let $\bar a_n=
\max(a_0,\dots,a_n).$ Then 
$q_n <  (2\bar a_nq_1)^n.$
\end{lemma}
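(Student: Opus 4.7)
\medskip

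\noindent\textbf{Proof proposal.} The plan is a direct induction on $n$ based on the standard recurrence for the denominators of the principal convergents,
\[
q_n = a_n q_{n-1} + q_{n-2}, \qquad n\geq 2,
\]
together with the two elementary facts that the sequence $(q_i)$ is monotone nondecreasing (so $q_{n-2}\leq q_{n-1}$) and that $\bar a_n\geq 1$ for all $n\geq 1$, since each partial quotient $a_i$ with $i\geq 1$ is a positive integer.

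First I would establish the one-step bound
\[
q_n \;=\; a_n q_{n-1} + q_{n-2} \;\leq\; \bar a_n q_{n-1} + q_{n-1} \;=\; (\bar a_n +1)\, q_{n-1} \;\leq\; 2\bar a_n\, q_{n-1},
\]
where the last inequality uses $\bar a_n\geq 1$. Iterating this inequality down from index $n$ to index $1$, and using the monotonicity $\bar a_i\leq \bar a_n$ for all $i\leq n$, yields
\[
q_n \;\leq\; (2\bar a_n)^{n-1}\, q_1.
\]
This is a cleaner intermediate estimate than the target, and the entire inductive content of the argument is contained in it.

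Finally, I would convert this intermediate estimate into the form required by the lemma by the trivial comparison
\[
(2\bar a_n)^{n-1} q_1 \;\leq\; (2\bar a_n)^{n}\, q_1^{n} \;=\; (2\bar a_n q_1)^{n},
\]
which is valid because $2\bar a_n q_1^{\,n-1}\geq 1$; since $2\bar a_n q_1\geq 2$, one in fact obtains the strict inequality $q_n<(2\bar a_n q_1)^n$ for every $n\geq 2$. I do not expect any genuine obstacle here: the result is purely an exercise in the basic continued-fraction recurrence, and no appeal to the deeper theorems of the paper (Chang completeness, McNaughton, Elliott's classification, etc.) is needed. The only mild subtlety is making sure one uses $\bar a_n\geq 1$ to absorb the ``$+1$'' coming from $q_{n-2}\leq q_{n-1}$, which is exactly what permits the clean multiplicative form with the factor $2\bar a_n$.
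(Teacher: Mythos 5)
Your proposal is correct and follows essentially the same route as the paper: both rest on the recurrence $q_n=a_nq_{n-1}+q_{n-2}$, the bound $q_{n-2}\leq q_{n-1}$, and absorbing the resulting $+1$ via $\bar a_n\geq 1$ to get $q_n\leq 2\bar a_n q_{n-1}$, then iterating/inducting; your intermediate estimate $q_n\leq(2\bar a_n)^{n-1}q_1$ and the final comparison with $(2\bar a_nq_1)^n$ (with strictness from the factor $2\bar a_nq_1^{\,n-1}\geq 2$) is just a slight repackaging of the paper's induction step.
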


\begin{proof}
By   \cite[I, \S 1, p.2]{lan},
$q_n=a_nq_{n-1}+q_{n-2} < q_{n-1}(a_n+1)
\leq 2a_nq_{n-1}$. So, in particular, 
$q_2<2a_2q_{1} < (2\bar a_2q_1)^2. $
Inductively, $q_n < 2a_nq_{n-1}<
2a_n(2\bar a_{n-1}q_1)^{n-1}\leq
(2\bar a_nq_1)(2\bar a_{n}q_1)^{n-1}=
(2\bar a_nq_1)^n.$
\end{proof}

\medskip
A brute force counting argument shows that
  uncountably many
Effros-Shen AF algebras do not have a decidable
word problem. On the other hand,  the following
result yields a large class of Effros-Shen algebras
having a polynomial time  word problem:

 \begin{theorem}
 \label{theorem:polytime-ef-cf}
 Let $\theta\in [0,1]\setminus \mathbb Q$.
 Suppose there is a real  $\kappa>0$ 
 such that for  every $n=0,1,\dots$
the   sequence $[a_0,\dots,a_n]$ of partial quotients
 of  $\theta$ is computable (as a finite list of binary integers) 
 in less than $2^{\kappa n}$  steps.\footnote{i.e.,
some Turing machine over input  
$0\dots0,\,\,\, (n \mbox{ zeros}),$ 
 outputs the list
of binary numbers 
 $[a_0,\dots,a_n]$ in a number of steps bounded
 above by some
{\it  exponential}  function of the input length $n$. Numerical inputs
 in unary notation are customary in recursive analysis and
 in discrete complexity theory,
 \cite{ker}.}  Then 
 the word problem of 
 $\mathfrak F_\theta$  has polynomial time complexity.
 \end{theorem}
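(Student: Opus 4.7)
The plan is to reduce the word problem of $\mathfrak{F}_\theta$ to two rational evaluations of a single McNaughton function $h$, with the evaluation points supplied by the continued-fraction convergents of $\theta$. By Corollary~\ref{corollary:primitive-quotients}(i) together with Lemma~\ref{lemma:lattice-quotient}(iv), $E(\mathfrak{F}_\theta) \cong \McNone/\mathfrak{j}_\theta$, and by Proposition~\ref{proposition:main-generator} the unique embedding $\eta_\theta$ acts by $f/\mathfrak{j}_\theta \mapsto f(\theta)$. Consequently, two formulas $\phi, \psi$ denote the same class in $E(\mathfrak{F}_\theta)$ iff $h(\theta) = 0$, where $h := |{\mathsf f}_\phi - {\mathsf f}_\psi| \in \McNone$ is the McNaughton function displayed in~\eqref{equation:distance}. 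By Claim~1 in the proof of Theorem~\ref{theorem:polytime-emmone}, the one-sided derivatives of $h$ have absolute value at most $N := O(||\phi||+||\psi||)$, so every breakpoint of $h$ in $\interval$ is a rational of denominator at most $2N$.

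The strategy is to bracket $\theta$ between two consecutive convergents that jump over every breakpoint of $h$, and then exploit $h \geq 0$. Using the hypothesis, compute $[a_0, a_1, \dots, a_n]$ for $n$ even and minimal such that $q_n > 2N$; by Lemma~\ref{lemma:chiuen}(ii), only $n = O(\log N)$ partial quotients are needed, and the computation takes at most $2^{\kappa n} = N^{O(\kappa)}$ steps. For even $n$ one has $p_n/q_n < \theta < p_{n+1}/q_{n+1}$; since $q_n, q_{n+1} > 2N$, neither endpoint is itself a breakpoint of $h$, and by Lemma~\ref{lemma:minkowski} no rational of denominator $\leq 2N$ lies strictly between them. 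Hence $h$ restricts to a single affine integer-coefficient function on the closed interval $[p_n/q_n, p_{n+1}/q_{n+1}]$; since $h \geq 0$ on this interval, the equation $h(\theta)=0$ is equivalent to $h(p_n/q_n) = h(p_{n+1}/q_{n+1}) = 0$. Both rational evaluations are polynomial-time computable by induction on subformulas, exactly as in the proof of Theorem~\ref{theorem:polytime-emmone}.

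The main obstacle is the bit-size bookkeeping. The hypothesis bounds the time, and hence the output bit-size, of the partial-quotient computation only by $2^{\kappa n}$, which is exponential in $n$; the saving grace is that only $n = O(\log N)$ partial quotients are needed, so $2^{\kappa n} = N^{O(\kappa)}$ is polynomial in $N$ and therefore in $||\phi|| + ||\psi||$. This in turn keeps every $p_i, q_i$ of polynomial bit-size and makes the final rational evaluations of $h$ polynomial-time. A minor parity issue, when the minimal $n$ satisfying $q_n > 2N$ happens to be odd, is resolved by incrementing $n$ by $1$ and reversing the bracketing inequality; this affects only multiplicative constants.
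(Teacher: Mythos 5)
Your proposal is correct and follows essentially the same route as the paper's proof: reduce to testing $h(\theta)=0$ for $h=|{\mathsf f}_\phi-{\mathsf f}_\psi|$, bound the denominators of breakpoints via the derivative estimate of Theorem \ref{theorem:polytime-emmone}, bracket $\theta$ between two consecutive convergents with $q_n>2N$ obtained from only $O(\log N)$ partial quotients (so the $2^{\kappa n}$ time bound becomes polynomial), invoke Lemma \ref{lemma:minkowski} to rule out breakpoints inside the bracket, and finish with $h\geq 0$ plus two polynomial-time rational evaluations. The only cosmetic difference is that the paper fixes the even index $n_\rho=2\lceil\log_2 8\|\rho\|\rceil$ outright via Lemma \ref{lemma:chiuen}, whereas you search for the minimal suitable $n$ and patch the parity, which changes nothing essential.
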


  \begin{proof} 
By \eqref{equation:omega}, we may argue for
$\McNone$ in place of   $E(\mathfrak M_1)$. 
By Lemma \ref{lemma:lattice-quotient},
 letting  $\gen(\theta)$ denote the
  MV  subalgebra 
of the standard MV-algebra
generated by $\theta,$  we may write
$$
E(\mathfrak F_\theta)=
  \Gamma(\mathbb Z+\mathbb Z\theta,1)=\gen(\theta)=
  \McNone/\mathfrak j_\theta.
$$
By  \cite[Theorem 4.16]{mun11}, 
$\gen(\theta)$ is uniquely isomorphic to  the quotient of    
$\McNone$ 
by the maximal  ideal $\mathfrak j_\theta$ 
of all McNaughton functions of $\McNone$
vanishing at $\theta.$
The irrationality of $\theta$ ensures that 
every  $f\in \McNone$ vanishing at $\theta$ also 
vanishes over an open neighborhood of $\theta.$
This is so because every linear piece of $f$ has
integer coefficients.  
As in the proof of 
Proposition
\ref{proposition:main-generator},
  the quotient map 
$f\in \McNone\mapsto f/\mathfrak j_\theta\in 
\McNone/\mathfrak j_\theta$ 
boils down to evaluating $f$ at $\theta.$

We must prove the existence of a polynomial
$T$ such that for arbitrary formulas
$\phi$ and $\psi$, the problem whether
$\phi$ and $\psi$  code the
same element of 
$E(\mathfrak F_\theta)$ is decidable in less than 
$T(||\phi||+||\psi||)$ steps.

By  \eqref{equation:distance},   the problem  amounts to
checking whether the   function
$|{\mathsf f}_\phi-{\mathsf f}_\psi|\in \McNone$
belongs to $\mathfrak j_\theta$.
In turn, this problem is equivalent to
checking
whether the function $|{\mathsf f}_\phi-{\mathsf f}_\psi|$
vanishes  at $\theta$.
Let  $\delta$ be the formula
\begin{equation}
\label{equation:absolute}
((\phi^*\oplus \psi)^*  \oplus (\psi^*\oplus \phi)^*).
\end{equation}
The same  verification in  the proof of Claim 2 in 
Theorem \ref{theorem:polytime-emmone}
shows that
  ${\mathsf f}_\delta=|{\mathsf f}_\phi-{\mathsf f}_\psi|.$ 
The quotient map  
 sends ${\mathsf f}_\delta\in \McNone$
 into the real number  ${\mathsf f}_\delta(\theta)\in \interval.$
Observe that  ${\mathsf f}_\delta$ belongs to $\mathfrak j_\theta$
iff ${\mathsf f}_\delta(\theta)=0.$ 

\medskip
\noindent
{\it So, to conclude the proof it is sufficient to prove that
there is a polynomial $T'$ such that for every formula $\rho$, 
the problem whether  ${\mathsf f}_\rho(\theta)=0$
can be solved  in less than  $T'(||\rho||)$ steps.}

\medskip
\noindent
Let   the integer $n_\rho$ be  defined by 
\begin{equation}
\label{equation:enchi}
 n_\rho=2\cdot \lceil \log_2 8||\rho|| \rceil =
2\cdot  \mbox{the smallest integer }\geq  \log_2 8||\rho||.
\end{equation}
By  Lemma  \ref{lemma:chiuen}(ii),
\begin{equation}
\label{equation:final}
q_{n_\rho}>2||\rho||.
\end{equation}

\medskip 
\noindent
{\it Claim 1:}
$
{\mathsf f}_\rho \mbox{ is linear over the interval } I_\rho=[p_{n_\rho}/q_{n_\rho},
p_{n_\rho +1}/q_{n_\rho+1}].
$

\smallskip
Otherwise (absurdum hypothesis),
the piecewise linear function ${\mathsf f}_\rho$ has a nondifferentiability
point $x$ lying in the interior of  
$I_\rho.$  Since the linear pieces of  ${\mathsf f}_\rho$ 
are restrictions of linear polynomials with  integer
coefficients, any such $x$ is the unique solution of an
equation of the form  $d'x+c'=d''x+c''$ for integers
$c',c'',d',d''.$ Thus $x$ is rational,
 and its denominator $d$ 
satisfies the inequalities 
$0<d \leq |d'-d''|\leq |d'|+|d''|$. 
Arguing inductively on the number of operation symbols in
subformulas of 
$\rho$  as
in the proof of  Theorem \ref{theorem:polytime-emmone},
it follows  that the one-sided derivatives $d'$ and $d''$
of ${\mathsf f}_\rho$  at $x$
are bounded above by  the number  $||\rho||$ of symbols of
$\rho.$ Thus  $d\leq 2||\rho||.$ 
{By Lemma \ref{lemma:minkowski}},  the rational
$x$  satisfies   $d >q_{n_\rho+1}>q_{n_\rho}.$
By \eqref{equation:final},
 $d >2||\rho||$, a contradiction that 
 settles our first claim.

 \medskip
Since   
 ${\mathsf f}_\rho$ is piecewise linear  
  and ${\mathsf f}_\rho\geq 0$,  from our claim we have:
    ${\mathsf f}_\rho(\theta)=0$\,\,\, iff \,\,\,$f$ vanishes over
 $I_\rho$\,\,\, iff \,\,\,${\mathsf f}_\rho(p_{n_\rho}/q_{n_\rho})=0
 ={\mathsf f}_\rho(p_{n_\rho+1}/q_{n_\rho+1})$.
Therefore, to   complete the proof of the theorem
there remains to be proved  that the values  
$$v={\mathsf f}_\rho(p_{n_\rho}/q_{n_\rho}),\,\,\,\,
w={\mathsf f}_\rho(p_{n_\rho+1}/q_{n_\rho+1})
$$
 are
 computable in
 polynomial time. 
To this purpose, we first settle the following:

 \medskip
\noindent
{\it Claim 2:}
The time (=number of Turing machine steps)
 $t$ needed to compute the sequence of binary integers
$[a_0,\dots,a_{n_\rho}]$ is bounded above  by
a  polynomial in the length of $\rho.$

\smallskip
As a matter of fact, 
\begin{eqnarray*}
t&<& 2^{\kappa n_\rho},\,\,\,\mbox{ by hypothesis}\\
{}&<& (2^{2(1+\log_2(8||\rho||)})^\kappa,\,\,\, \mbox{ by 
\eqref{equation:enchi}}\\ 
{}&=&4^\kappa\cdot (8||\rho||)^{2\kappa}.
\end{eqnarray*}

 \medskip
\noindent
{\it Claim 3:}
The two sequences
of binary integers 
 integers  $p_0,\dots,p_{n_\rho}$ and
$q_0,\dots,q_{n_\rho}$  
 are computable in polynomial
 time. 
 
 \smallskip
  As a matter of fact, 
 letting 
  $||j||_2$ denote the length of the integer $j$
 written in binary notation,   for some
 constant $c$ and polynomial $P$  we have
 \begin{eqnarray*}
 ||q_{n_\rho}||_2&<&n_\rho\log_2 2q_1\bar a_{n_\rho}, 
 \mbox{ by Lemma }  \ref{lemma:claim3}\\
 {}&<&  (2+2\log_2 8||\rho||)\cdot(c+||\bar a_{n_\rho}||_2),
  \mbox{ by \eqref{equation:enchi}}\\
    {}&<&P(||\rho||),   \mbox{ a fortiori, by  Claim 2} .
 \end{eqnarray*}
Now,   the binary integer 
$q_{n_\rho}$ is computed via  a number $n_\rho$ 
 (logarithmic in $||\rho||$) of applications of
the recursive formulas  \cite[I, \S 1, p.2]{lan},
involving additions and multiplications of the binary integers
  $||a_i||_2,  ||q_i||_2,  ||p_i||_2$     as in the euclidean 
algorithm.  It follows that  the computation
of   
$||q_{n_\rho}||_2$
takes polynomial time (say   $P^3(||\rho||)$),
and so does the computation of $||p_{n_\rho}||_2$.
Our third claim is thus settled.

\smallskip

For every subformula $\sigma$ of $\rho$, 
arguing inductively on the number of operation
symbols of $\sigma$, it follows   that
the computation of  the rational number 
$v={\mathsf f}_\sigma(p_{n_\rho}/q_{n_\rho})$,
as a pair of binary integers, 
requires  polynomial
time in $||\rho||$.  
As a matter of fact,  the number of subformulas
of $\rho$ is bounded above  by  $||\rho||.$
 At each  step,  we have the following two alternatives:
 Either $\sigma$ is transformed into $\sigma^*$,
 whence
 ${\mathsf f}_\sigma^*(p_{n_\rho}/q_{n_\rho})=1-
 {\mathsf f}_\sigma(p_{n_\rho}/q_{n_\rho})$.  Or else, 
 $\sigma=(\alpha \oplus \beta)$, in which case we
 argue by cases using   formulas
 \eqref{equation:cases}-\eqref{equation:cases-bis} as in
  Theorem
\ref{theorem:polytime-emmone}, Claim 1.
Similarly, the value 
$w={\mathsf f}_\rho(p_{n_\rho+1}/q_{n_\rho+1})$ 
is computable in polytime.

We have just shown the existence of 
the desired polynomial $T'$ such that 
 for every formula $\rho$,
 the problem whether  ${\mathsf f}_\rho(\theta)=0$
is decidable in less than  $T'(||\rho||)$ steps. 

As a consequence, 
 the word problem of 
 $\mathfrak F_\theta$  has polynomial time complexity.
\end{proof}

  \begin{corollary}
  \label{corollary:quadratic}
   The word problem
of each  Effros-Shen algebra  $\mathfrak F_{\theta}$,
for  $\theta$   a quadratic
irrational,  or $\theta = 1/e$, 
has polynomial time complexity.
 \end{corollary}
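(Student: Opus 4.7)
The plan is to reduce the corollary to the hypothesis of Theorem \ref{theorem:polytime-ef-cf} by verifying, in each of the two cases, that the list of partial quotients $[a_0,\dots,a_n]$ of $\theta$ is computable by a Turing machine (on unary input $n$) in fewer than $2^{\kappa n}$ steps for some constant $\kappa>0$. Since this exponential bound is extremely generous, in both cases I aim to exhibit an algorithm running in polynomial (indeed essentially linear) time in $n$, which is far more than enough.

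For $\theta$ a real quadratic irrational in $\interval$, I invoke Lagrange's classical theorem: the continued fraction expansion of any quadratic irrational is eventually periodic. Starting from a presentation of $\theta$ as a root of an integer polynomial $aX^2+bX+c$, one runs the standard surd-reduction algorithm, maintaining each tail $\theta_n$ exactly in the form $(P_n+\sqrt{D})/Q_n$ with $P_n,Q_n,D\in\mathbb{Z}$ and extracting $a_n=\lfloor\theta_n\rfloor$, then setting $\theta_{n+1}=1/(\theta_n-a_n)$. A classical bound on reduced quadratic surds shows that the bit-lengths of $P_n$ and $Q_n$, and hence the $a_n$, remain uniformly bounded by a constant depending only on $\theta$. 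Consequently the entire list $[a_0,\dots,a_n]$ is producible in time $O(n)$, well below $2^{\kappa n}$. Theorem \ref{theorem:polytime-ef-cf} then gives polynomial-time solvability of the word problem of $\mathfrak F_\theta$.

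For $\theta=1/e$, I appeal to Euler's explicit continued fraction for $e$, namely
\[
e=[2;1,2,1,1,4,1,1,6,1,1,8,\dots],
\]
whose partial quotients follow the transparent periodic-like rule ``$1,2k,1$ for $k=1,2,\dots$'' after the initial $2$. Inverting gives
\[
1/e=[0;2,1,2,1,1,4,1,1,6,\dots],
\]
so the $n$-th partial quotient is an integer of size $O(n)$, and a trivial pattern-driven Turing machine produces $[a_0,\dots,a_n]$ in time polynomial (in fact linear) in $n$. Again the hypothesis of Theorem \ref{theorem:polytime-ef-cf} is satisfied, and the conclusion follows.

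The main obstacle, such as it is, is not the complexity accounting but the classical content: one needs Lagrange's periodicity theorem (together with the standard bound on the surd data $(P_n,Q_n)$) for the first case, and Euler's formula for $e$ for the second. Both are entirely off-the-shelf, and once invoked, the hypothesis of Theorem \ref{theorem:polytime-ef-cf} is met with exponential room to spare; no further estimates are required.
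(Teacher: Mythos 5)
Your proposal is correct and follows essentially the same route as the paper: reduce to Theorem \ref{theorem:polytime-ef-cf} and verify its hypothesis via Lagrange's periodicity theorem for quadratic irrationals and Euler's continued fraction for $e$. The only cosmetic difference is that you handle the passage from $e$ to $1/e$ by the elementary shift $1/\theta=[0;a_0,a_1,\dots]$, where the paper cites Serret's theorem, and you spell out the surd-reduction bookkeeping that the paper leaves to the reference to \cite{lan}.
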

 
 \begin{proof} This follows from a straighforward application of
 the foregoing theorem,  recalling  that both
 $1/e$ and any quadratic irrational satisfy the hypothesis 
 therein. 
 Specifically, for each quadratic irrational one 
 may use  Lagrange theorem,
 \cite[Theorem 3, IV, \S 1, p.57]{lan}.
For Euler's constant  e,  the proof follows by direct inspection of Euler's continued
fraction for e,   \cite[V, \S 2, p.74]{lan}  together with Serret's theorem
 \cite[Theorem 8, I, \S 3, p.14]{lan}.
 \end{proof}
 \noindent
 Fix an irrational $\theta$
and  consider the following decision problem:
 
\medskip
 \noindent
 ${\mathsf{INSTANCE}:}$  Integers  $q>0$
 and $p$,  written
 in {\it binary notation}.
 
  \smallskip
 \noindent
 ${\mathsf{QUESTION:}}$  Does the inequality    
  $p/q<\theta $ hold ?
 
\bigskip 
 
We  say
 that   $\theta$ 
 {\it  has an exponential time computable rational  left cut} 
if  the above problem can be
 decided by a Turing machine in a number of steps
 bounded above  by an exponential function of 
 the length $||(p,q)||$  of the input pair  of binary integers $(p,q)$.
 In the literature on recursive analysis, 
  one similarly defines ``dyadic'' left cuts,
    \cite[Definition 2.7 and Remark, p.48]{ker}.

 \begin{corollary}
 \label{corollary:polytime-ef-cut}
For any irrational $\theta\in \interval$
having an {\em exponential} time computable left cut,
the Effros-Shen  algebra
 $\mathfrak F_\theta$ has a {\em polynomial}
  time word problem. 
 \end{corollary}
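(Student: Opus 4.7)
The plan is to bypass the continued fraction expansion of $\theta$ altogether, and instead sandwich $\theta$ between two consecutive rationals of controlled size, located by direct queries to the left cut. By the initial reduction carried out in the proof of Theorem \ref{theorem:polytime-ef-cf}, deciding whether two formulas $\phi,\psi$ code the same element of $E(\mathfrak F_\theta)$ is equivalent to deciding whether ${\mathsf f}_\delta(\theta)=0$, where $\delta=((\phi^*\oplus\psi)^*\oplus(\psi^*\oplus\phi)^*)$ satisfies ${\mathsf f}_\delta=|{\mathsf f}_\phi-{\mathsf f}_\psi|\geq 0$ and $||\delta||$ is linear in $||\phi||+||\psi||$. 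Crucially, the slope bound already extracted as Claim 1 in the proof of that theorem shows that every non-differentiability point of ${\mathsf f}_\delta$ is a rational whose denominator is at most $N:=2||\delta||$.

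The algorithm enumerates all $O(N^2)$ reduced fractions $p/q\in\interval$ with $q\leq N$ (the Farey sequence $F_N$). For each of them, one call to the left cut subroutine decides whether $p/q<\theta$. Since both $p$ and $q$ have binary length $O(\log N)=O(\log||\delta||)$, the hypothesis makes every such call run in time at most $2^{\kappa\cdot O(\log||\delta||)}=||\delta||^{O(\kappa)}$, i.e., polynomial in $||\delta||$. Let $a$ be the largest element of $F_N$ lying below $\theta$ and $b$ the smallest lying above; these exist because $0,1\in F_N$ and $\theta\in\interval\setminus\mathbb Q$, and by construction they are consecutive in $F_N$.

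Because $a$ and $b$ are consecutive in $F_N$, the open interval $(a,b)$ contains no rational of denominator $\leq N$, and in particular no breakpoint of ${\mathsf f}_\delta$. Thus ${\mathsf f}_\delta$ agrees with a single affine function on $[a,b]$. Since ${\mathsf f}_\delta\geq 0$ and $\theta$ is an interior point of $[a,b]$, this affine function vanishes at $\theta$ if and only if it vanishes identically on $[a,b]$, yielding the equivalence
\[
{\mathsf f}_\delta(\theta)=0 \iff {\mathsf f}_\delta(a)={\mathsf f}_\delta(b)=0.
\]
The two rational values ${\mathsf f}_\delta(a)$ and ${\mathsf f}_\delta(b)$ are then computed by induction on the $O(||\delta||)$ subformulas of $\delta$, exactly as in the proof of Theorem \ref{theorem:polytime-emmone}, in time polynomial in $||\delta||$. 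The only delicate observation is the one hidden in the previous paragraph: \emph{exponential} time on an input of \emph{logarithmic} bit length is polynomial, and this is precisely what turns the ``exponential'' hypothesis on the left cut into the ``polynomial'' conclusion for the word problem. Notably, no computation of partial quotients of $\theta$ and no appeal to the growth lemmas \ref{lemma:chiuen}--\ref{lemma:claim3} is required beyond what is already inherited from the breakpoint-denominator bound.
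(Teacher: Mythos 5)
Your proposal is correct and follows essentially the same route as the paper's own proof: reduce to testing ${\mathsf f}_\delta(\theta)=0$, use the slope bound to confine breakpoints to rationals of denominator at most $2||\delta||$, locate $\theta$ between consecutive Farey fractions of that order by left-cut queries on inputs of logarithmic bit length (so the exponential-time cut yields polynomial time), and check vanishing at the two endpoints. The only cosmetic difference is that you credit the breakpoint-denominator bound to Claim 1 of Theorem \ref{theorem:polytime-ef-cf}, whereas it really stems from Claim 1 of Theorem \ref{theorem:polytime-emmone}; the paper's corollary re-derives it the same way, so nothing of substance changes.
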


 \begin{proof}
Two formulas  $\phi$ and $ \psi$
 code the same element of $\mathfrak F_\theta$ 
iff  the identity
 ${\mathsf f}_\psi(\theta)={\mathsf f}_\phi(\theta)$ holds
in $\McNone$   iff
${\mathsf f}_\delta(\theta)=0$, with 
$\delta$ the formula
$((\phi^*\oplus \psi)^*  \oplus (\psi^*\oplus \phi)^*)$.
Indeed, as the reader will recall, 
 ${\mathsf f}_\delta=|{\mathsf f}_\phi-{\mathsf f}_\psi|.$
It is sufficient to show the existence of a polynomial
$T$ such  that for any 
formula $\rho,$
 the problem of checking whether
  ${\mathsf f}_\rho(\theta)=0$ can be solved
  in less than $T(||\rho||)$ Turing steps.

To this purpose, let  
$$
0=r_0<r_1<\dots< r_k=1, \,\,\,r_i=n_i/d_i\in \interval \cap\, \mathbb Q,\,\,
0<d_i\leq 2 ||\rho||, (i=0,\dots,k)
$$
be the  Farey sequence  of all
 rational points $r_i\in\interval$ of denominator  $\leq 2 ||\rho||$.
 
Since every  rational  $x$ with $r_i< x < r_{i+1}$
has a denominator $d > 2 ||\rho||,$ we claim that
\begin{equation}
\label{equation:linear}
\mbox{${\mathsf f}_\rho$ is linear over each interval 
$[r_i, r_{i+1}]$.}
\end{equation}
For otherwise (absurdum hypothesis),
${\mathsf f}_\rho$ has a nondifferentiability
point $x$ lying in the interior of 
$[r_i, r_{i+1}]$.
Any such $x$ is the unique solution of an
equation of the form  $d'x+c'=d''x+c''$ for suitable integers
$c',c'',d',d''.$ Thus $x$ is rational, 
  and its denominator $d$ 
satisfies the inequalities 
$0<d\leq |d'-d''|\leq |d'|+|d''|$.
As
in the proof of  Theorem \ref{theorem:polytime-emmone}
(Claim 1),
 the values  $d'$ and $d''$ of the one-sided derivatives
 of ${\mathsf f}_\rho$ at $x$ 
are bounded above by   $||\rho||$.
Thus  $d \leq 2||\rho||,$ a contradiction.

\smallskip
There exists a Turing machine   $\mathcal T$
and a (cubic)  polynomial $T'$ with the following properties:
Over any  input  $\rho$, 
$\mathcal T$
 outputs the sequence of intervals $[r_i, r_{i+1}]$ in 
 less than $T'(||\rho||)$ steps.  
To this purpose,  $\mathcal T$
first computes the Farey mediant  $1/2$ of
 0 and 1, and then, inductively,   $\mathcal T$
  inserts
all possible Farey mediants of denominator
$3,4,\dots, \,2||\rho||-1,\,\, 2||\rho||$,  
(as pairs of integers in binary notation)
until the desired Farey
sequence is completed.
 
For each $i=1,\dots, k-1$,  let
 $L_i$ be the length of the quadruple
of binary integers $n_i,d_i,n_{i+1}, d_{i+1}$ denoting the  rational interval
$[r_i, r_{i+1}]$. Then for some constant  $\lambda>0$,\,\,
we can write 
 $L_i< \lambda \log_2 ||\rho||.$
Arguing inductively on subformulas
of $\rho$ as in the proof of Theorem \ref{theorem:polytime-ef-cf},
we may assume that for some polynomial $T''$,\,\,
machine $\mathcal T$  outputs the
list of  (automatically rational)
values ${\mathsf f}_\rho(r_i), \,\,\, (i=0,\dots,k)$
in less than $T''(||\rho||)$ steps.
Each  ${\mathsf f}_\rho(r_i)$ is a pair
of binary integers. 

Since $\theta$
has an exponential time computable rational  left cut,  
$\mathcal T$ detects  
the only interval $[r_{i^*}, r_{{i^*+1}}]$ with 
$\theta\in [r_{i^*}, r_{{i^*+1}}]$
 in exponential time
with respect to $L_i$, and hence,  in polynomial time  
with respect to $||\rho||$.

We have thus shown the existence of the desired
polynomial  $T$, \,\, having the property that 
$\mathcal T$  checks
  whether 
${\mathsf f}_\rho(r_{i^*})={\mathsf f}_\rho( r_{{i+1}^*})=0$
in less than  $T(||\rho||)$ steps. 
By \eqref{equation:linear},
this condition is equivalent to
 ${\mathsf f}_\rho(\theta)=0$.

Thus  the word problem of 
 $\mathfrak F_\theta$ has polytime complexity.
 \end{proof}

 The following result strengthens the first statement of
Corollary   \ref{corollary:quadratic}:
 
 \begin{corollary}
 \label{corollary:collins}
 Let $\theta\in \interval\setminus \mathbb Q$
 be a real algebraic integer.
Then the word problem of 
 $\mathfrak F_\theta$ has polytime complexity.
 \end{corollary}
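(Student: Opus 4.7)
The plan is to reduce everything to Corollary \ref{corollary:polytime-ef-cut}: it suffices to verify that every real algebraic irrational $\theta\in\interval$ has an exponential-time (in fact, polynomial-time) computable rational left cut. The key insight is that evaluating a fixed integer polynomial at a rational point, and reading off its sign, is an efficient operation whose complexity depends only on the size of the rational input.

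Fix once and for all a nonzero $m(x)\in\mathbb Z[x]$ of least degree with $m(\theta)=0$; since $\theta$ is an algebraic integer, one may even take $m$ monic. By a classical real root isolation theorem---Sturm's, or Collins' subresultant algorithm---one can precompute rationals $a<b$ in $\interval$ such that $\theta\in(a,b)$ and $\theta$ is the only root of $m$ in $(a,b)$. Because $\theta$ is a simple root of its minimal polynomial, the sign of $m$ is constant on $(a,\theta)$ and opposite on $(\theta,b)$; precompute the sign $\epsilon\in\{+1,-1\}$ that $m$ takes on $(a,\theta)$. All this data depends solely on $\theta$, and contributes only a fixed constant to the complexity estimate in terms of input length. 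Given as input a rational $p/q$ written in binary of total length $n$, the algorithm deciding whether $p/q<\theta$ then runs as follows: first compare $p/q$ with the fixed rationals $a$ and $b$ in time $O(n)$, and if $p/q\le a$ or $p/q\ge b$ answer directly; otherwise evaluate $m(p/q)$ via Horner's scheme, obtaining a rational whose numerator and denominator have binary length $O(n)$ (since the degree and height of $m$ are constants), in time polynomial in $n$. Because $\theta\notin\mathbb Q$ we have $m(p/q)\ne 0$; compare the sign of $m(p/q)$ with $\epsilon$ to determine whether $p/q<\theta$ or $p/q>\theta$.

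The entire procedure runs in time polynomial, and hence certainly exponential, in $n$. Corollary \ref{corollary:polytime-ef-cut} then yields the polytime complexity of the word problem of $\mathfrak F_\theta$. The only non-routine ingredient is the one-off precomputation of the isolating interval $(a,b)$ and of $\epsilon$; but this is classical, and, crucially, independent of the input $p/q$, so it contributes only an input-independent additive constant to the total running time. No step presents a serious obstacle.
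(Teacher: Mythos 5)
Your proposal is correct and follows essentially the same route as the paper: fix an integer polynomial with an isolating rational interval for $\theta$ (the paper's ``interval representation'', normalized so the sign test is $l(p/q)<0$ rather than your precomputed $\epsilon$), decide $p/q<\theta$ by evaluating the polynomial at $p/q$ and reading the sign, conclude that $\theta$ has an exponential-time (indeed polytime) computable rational left cut, and invoke Corollary \ref{corollary:polytime-ef-cut}. The differences (minimal monic polynomial and Horner evaluation versus the paper's unspecified representation from \cite{mis}) are cosmetic.
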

 
 \begin{proof}  There exists a one-variable polynomial
 $l \colon \mathbb R\to \mathbb R$ with integer coefficients, together with a rational
 interval $[r',r'']\subseteq \interval$ such that $\theta$ is
 the only root of $l$ lying in $[r',r'']$.   
We say that  $(l,[r',r''])$ is   the
 {\it interval  representation}   of $\theta$,  \cite[p.327]{mis}.
Without loss of generality, we may also assume 
\begin{equation}
\label{equation:derivative}
\frac{{\rm d}l(\theta)}{{\rm d}x} > 0.
\end{equation}
 {\it There exists} a Turing
 machine $\mathcal T$ and a constant  $\kappa>0$ such
 for any input pair  $(p,q)$ of binary integers,
 $\mathcal T$ decides whether  $p/q<\theta$ in less than
 $2^{(\kappa ||(p,q)||)}$ steps:  in view of
 \eqref{equation:derivative}, this amounts to checking whether
 $p/q < r'$ or else  $p/q\in [r',r'']$ and  $l(p/q)<0$.
  (Since the data needed for the representation
 $(l,[r',r''])$  are assumed to be given at the outset, independently
 of  $p/q$, a closer inspection shows
 that $\mathcal T$ actually works in polytime.)
Since  $\theta$ has an exponential  time
 rational left cut, the desired conclusion follows from 
 Corollary  \ref{corollary:polytime-ef-cut}.
 \end{proof}

\section{Quotients of 
$\mathfrak M_1$ and G\"odel incomplete word problems}

Differently from the earlier sections, in this
section  familiarity is required with first order logic  (G\"odel
completeness and 
incompleteness, \cite{man, mon}) and \luk\ 
infinite-valued sentential logic 
(decidability,  Lindenbaum algebras,
\cite{cigdotmun, mun11}).  

As we have seen,
many primitive quotients of 
 the Farey AF algebra  $\mathfrak M_1$
 existing in the literature have a polytime word problem.
On the other hand, since there are only countably many
Turing machines,  already
 uncountably many  Effros-Shen quotients 
 $\mathfrak F_\theta$ have
 an undecidable word problem.

An  interesting intermediate class of structures
is given by  quotients $\mathfrak B=\mathfrak M_1/\mathfrak I$ 
 having a  {\it G\"odel incomplete word problem.}
 This means that  the set of pairs of formulas
$(\phi,\psi)$ that code the same equivalence class of
projections of $\mathfrak B$  is recursively enumerable 
and undecidable.

%
 %

\smallskip
The following result shows that 
primitive quotients of $\mathfrak M_1$
are immune to G\"odel incompleteness.

\begin{theorem}
\label{theorem:apal-immune}
Let $\mathfrak P$ be a primitive ideal of $\mathfrak M_1.$
\begin{itemize}

\smallskip
\item[(a)]
If  $\mathfrak P$ is a maximal ideal of 
$\mathfrak M_1$ such that 
$\mathfrak M_1/\mathfrak P$ is finite-dimensional,
then
the word problem of 
$\mathfrak M_1/\mathfrak P$  is solvable in polynomial time.  

\smallskip
\item[(b)]
If  $\mathfrak P$ is a maximal ideal of 
$\mathfrak M_1$
such that 
$\mathfrak M_1/\mathfrak P$ is not finite-dimensional,
and the word problem of 
$\mathfrak M_1/\mathfrak P$ is  recursively enumerable,
 then it  is decidable. 

\smallskip
\item[(c)]
If  $\mathfrak P$ is not a maximal ideal of 
$\mathfrak M_1$ 
the word problem of $\mathfrak M_1/\mathfrak P$
 is solvable in polynomial time.

\end{itemize}
\end{theorem}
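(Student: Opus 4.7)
The plan is to treat the three cases via the classification of primitive quotients in Corollary \ref{corollary:primitive-quotients}.

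For (a), the maximal ideals $\mathfrak P = \mathfrak J_\xi$ with $\mathfrak M_1/\mathfrak P$ finite-dimensional are precisely those with $\xi \in \mathbb Q \cap \interval$, giving quotients $\mathbb C$ or $\mathbb M_q$. By \eqref{equation:omega}--\eqref{equation:omega-quotient} together with Corollary \ref{corollary:ell-ideals}(i), two formulas $\phi,\psi$ code the same class iff $\mathsf f_\phi(\xi) = \mathsf f_\psi(\xi)$. I would argue by induction on the number of operation symbols in subformulas $\sigma$ that the rational $\mathsf f_\sigma(\xi)$ is computable in polynomial time in $||\sigma||$; this is Claim~1 of Theorem \ref{theorem:polytime-emmone} specialized to the fixed rational $\xi$, and comparison of the two resulting rationals finishes the polytime decision.

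For (c), the non-maximal primitive ideals of $\mathfrak M_1$ are exactly $\mathfrak J_0^+, \mathfrak J_1^-$ and the $\mathfrak J_{p/q}^{\pm}$ by Corollary \ref{corollary:primitive-ideals}, whose corresponding quotients are the Behnke-Leptin algebras (Corollary \ref{corollary:primitive-quotients}(iii)-(v)). The polytime complexity is then the content of Theorem \ref{theorem:polytime-bl} and Corollary \ref{corollary:polytime-other-bl}, so nothing further is required.

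The main obstacle is (b), where $\mathfrak P = \mathfrak J_\theta$ for some irrational $\theta \in \interval$ and $\mathfrak M_1/\mathfrak P = \mathfrak F_\theta$. My plan is to reduce the problem to deciding the rational cut of $\theta$. For each rational $r = p/q \in \interval$, McNaughton's theorem guarantees formulas $\Phi_r$ and $\Psi_r$ whose McNaughton functions are $\min(1,\max(0, qx-p))$ and $\min(1,\max(0, p-qx))$ respectively; since $\theta$ is irrational, one has $\mathsf f_{\Phi_r}(\theta) = 0$ iff $\theta < r$, and $\mathsf f_{\Psi_r}(\theta) = 0$ iff $\theta > r$. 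If the word problem of $\mathfrak F_\theta$ is r.e., then applying the hypothesized r.e. procedure to the pairs $(\Phi_r, 0)$ and $(\Psi_r, 0)$ simultaneously enumerates the sets $\{r \in \mathbb Q \cap \interval : r > \theta\}$ and $\{r \in \mathbb Q \cap \interval : r < \theta\}$. These sets partition $\mathbb Q \cap \interval$ by irrationality of $\theta$, so both are recursive and the rational cut of $\theta$ is decidable.

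With a decidable rational cut in hand I would close (b) by rerunning the argument of Corollary \ref{corollary:polytime-ef-cut}, weakening its exponential-time hypothesis to mere decidability and correspondingly relaxing its polytime conclusion to decidability: given $\phi,\psi$, form $\delta$ as in \eqref{equation:absolute}, effectively compute the finite list of closed rational intervals on which $\mathsf f_\delta$ vanishes, then query the rational-cut oracle to test whether $\theta$ lies in any of them. The principal difficulty is the first reduction, where the explicit formulas $\Phi_r,\Psi_r$ bootstrap a recursively enumerable word problem into full decidability through the complementarity trick available for irrational~$\theta$.
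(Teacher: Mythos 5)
Your treatment of (a) and (c) matches the paper's: (a) reduces to the finite Elliott monoid $E(\mathfrak M_1/\mathfrak J_{p/q})$ (the paper simply calls this case trivial, while you spell out evaluation at the rational $\xi$ as in Claim 1 of Theorem \ref{theorem:polytime-emmone}), and (c) is exactly the paper's appeal to Theorem \ref{theorem:polytime-bl} and Corollary \ref{corollary:polytime-other-bl} after identifying the non-maximal primitive ideals as $\mathfrak J_0^+$, $\mathfrak J_1^-$, $\mathfrak J_{p/q}^{\pm}$. For (b), however, you take a genuinely different and self-contained route. The paper identifies $\McNone/\mathfrak j_\theta$ with the Lindenbaum algebra of a theory $\Theta$ in \luk\ logic, notes that a recursively enumerable word problem makes $\Theta$ recursively enumerable, and then invokes \cite[Theorem 6.1]{mun-jfa}, by which simplicity of the quotient upgrades recursive enumerability to decidability. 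You instead bootstrap the r.e.\ oracle into a decision procedure for the rational cut of $\theta$: the pairs $(\Phi_r,0)$ and $(\Psi_r,0)$ enumerate $\{r>\theta\}$ and $\{r<\theta\}$, which are complementary within $\mathbb Q\cap\interval$ because $\theta$ is irrational, so both are recursive; you then rerun the Farey-interval argument of Corollary \ref{corollary:polytime-ef-cut} with the exponential-time hypothesis relaxed to mere decidability. This is correct, and it arguably buys more: it exhibits the equivalence between decidability of the word problem of $\mathfrak F_\theta$ and decidability of the rational cut of $\theta$, in the spirit of the paper's remark on $\mathfrak F_{\pi-3}$, whereas the paper's argument is shorter but rests on an external theorem about simple quotients. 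One small repair: McNaughton's theorem only gives the \emph{existence} of $\Phi_r$ and $\Psi_r$, while your reduction needs the map $r\mapsto(\Phi_r,\Psi_r)$ to be computable; since these are the functions $e_{q,-p}$ and $e_{-q,p}$, this is supplied by the effective construction of \cite[Proposition 2.7]{aguz}, exactly as the paper itself does in the proof of Theorem \ref{theorem:example}.
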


\begin{proof} 
We will work with
 $\McNone$ in place of its isomorphic copy $E(\mathfrak M_1)$.
 
 \smallskip
 (a)
 By Corollaries  \ref{corollary:K0-list}(i) and
\ref{corollary:primitive-quotients}(ii),
 $\mathfrak P=\mathfrak J_{p/q}$ for some
 rational $p/q\in \interval$, and  
$E(\mathfrak M_1/\mathfrak J_{p/q})$ is finite. 
Then, trivially,   the word problem of   
$\mathfrak M_1/\mathfrak J_{p/q}$
 is solvable in polynomial time.

\medskip
(b)-(c)
 By Lemma
 \ref{lemma:lattice-quotient}(iii),
the map  
\begin{equation}
\label{equation:latticeiso}
\Gamma\circ K_0\colon 
\mathfrak I\in
 \{\mbox{ideals of } \mathfrak M_1\}\mapsto \mathfrak i
 \in  \{\mbox{ideals of } \McNone\}
 \end{equation}
is a lattice isomorphism.
Since  $\mathfrak P$ is primitive, 
$\mathfrak p$ is {\it prime}
  (meaning that $\McNone/\mathfrak p$
is totally ordered).  
Recalling the one-one correspondence $\xi \in \interval
\mapsto \mathfrak j_\xi$
in Theorem  \ref{theorem:robinson-apal},
  let  $\theta$ be the only point of $\interval$ 
  such that the maximal ideal 
  $ \mathfrak j_\theta$ contains $ \mathfrak p.$
By Theorem  \ref{theorem:robinson-apal},
at most two ideals contain  $\mathfrak p$, namely
 $\mathfrak  j_\theta=\{f\in \McNone\mid f(\theta)=0\}$
and $\mathfrak p$ itself.

\bigskip
\noindent
\emph{Case 1.} 
$\mathfrak p=\mathfrak j_\theta$. 

In view of the   lattice isomorphism
 $\mathfrak I\mapsto \mathfrak i$\,\,\, 
 in \eqref{equation:latticeiso},
 $\mathfrak P$ is a
 maximal ideal  of $\mathfrak M_1$ and
   $\mathfrak M_1/\mathfrak P$ is simple.
By Lemma \ref{lemma:lattice-quotient}(iv),
the  MV-algebra  $\Gamma(K_0(\mathfrak M_1/\mathfrak P))$
may be identified with  $\McNone/\mathfrak p$.  
With reference to \cite[\S\S 4.4-4.6]{cigdotmun} or
 \cite[\S\S 1.5, 2.5]{mun11}, there is  
 is a {\it theory}  (i.e., a set  $\Theta$ of formulas 
containing all consequences
of any finite subset of $\Theta$),  
consisting of  one-variable formulas in \luk\ logic, 
 such that $\McNone/\mathfrak p$ is
the {\it Lindenbaum algebra} of $\Theta,$
the algebra of equivalence classes of formulas
modulo $\Theta.$ 
From  the assumed recursive enumerability
of the word problem of  $\mathfrak M_1/\mathfrak P$
it follows that  $\Theta$ is recursively enumerable.
By \cite[Theorem 6.1]{mun-jfa},  the simplicity of 
$\mathfrak M_1/\mathfrak P$ entails the decidability
of the problem whether a formula
$\omega$ belongs to $\Theta$.  
By definition of Lindenbaum algebra, for  every
formula $\omega$ of  the form
$\phi\leftrightarrow \psi$ we  have: 
$\omega$ belongs to $\Theta$ iff  
${\mathsf f}_\phi/\mathfrak p$ coincides with ${\mathsf f}_\psi/\mathfrak p$  in
the quotient MV-algebra $\McNone/\mathfrak p$
= $\Gamma(K_0(\mathfrak M_1/\mathfrak P))$.  
Thus  the word problem of  
$\mathfrak M_1/\mathfrak P$ is decidable.
We have proved (b).

\bigskip
\noindent
\emph{Case 2.} 
$\mathfrak p \not= \mathfrak j_\theta$. 

By  the classification Theorem  \ref{theorem:robinson-apal},
 $\theta$ must be a rational number in $\interval$ and  
 $\mathfrak p$   either  coincides with the ideal 
$$
\mathfrak j_\theta^+=\{f\in \McNone \mid
f=0 \mbox{ on a right neighborhood of } 
\theta\}
$$
or with 
$$
\mathfrak j_\theta^-
=\{f\in \McNone \mid
f=0 \mbox{ on a left  neighborhood of } \theta\}.
$$
By  Corollary   \ref{corollary:polytime-other-bl}, the word problem
of both involutive monoids $\McNone/\mathfrak \mathfrak j_\theta^+$
and     $\McNone/\mathfrak \mathfrak j_\theta^-$ has polytime complexity. 

Having thus proved  (c),  the proof  is  complete.
\end{proof}

\begin{corollary}
\label{corollary:apal-immune}
Let  $\mathfrak B$ be a quotient  of
 $\mathfrak M_1$
with  finitely many maximal ideals, and with
a recursively enumerable word problem.
Then   the word problem of $\mathfrak B$
is decidable. 
\end{corollary}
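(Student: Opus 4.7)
The plan is to translate to the MV-algebra setting and reduce the word problem of $\mathfrak B$ to word problems of finitely many primitive quotients of $\mathfrak M_1$, each controlled by Theorem~\ref{theorem:apal-immune}. First, via the isomorphism $\omega$ of \eqref{equation:omega}, I write $E(\mathfrak B) \cong \McNone/\mathfrak i$. The finitely many maximal ideals of $\mathfrak B$ correspond to finitely many maximal ideals $\mathfrak j_{\xi_1}, \dots, \mathfrak j_{\xi_k}$ of $\McNone$ above $\mathfrak i$; by the classification in Theorem~\ref{theorem:robinson-apal}, every prime of $\McNone$ is contained in some maximal ideal, so every prime above $\mathfrak i$ is either some $\mathfrak j_{\xi_j}$ or one of the at most two one-sided primes $\mathfrak j_{\xi_j}^\pm$ existing only for rational $\xi_j$. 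Hence there are only finitely many primes $\mathfrak p_1, \dots, \mathfrak p_N$ above $\mathfrak i$. Since every MV-algebra is a subdirect product of chains, every ideal equals the intersection of the primes containing it, and so $\mathfrak i = \bigcap_s \mathfrak p_s$. Consequently, $\phi \equiv \psi$ in $\mathfrak B$ iff $\phi \equiv \psi$ in each primitive quotient $\mathfrak M_1/\mathfrak P_s$, where $\mathfrak P_s$ corresponds to $\mathfrak p_s$ via $K_0$ and $\Gamma$.

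By Theorem~\ref{theorem:apal-immune}(a),(c), the word problems of the primitive quotients with rational data---finite-dimensional or Behnke--Leptin---are polynomial-time decidable outright. The remaining task is to handle the Effros--Shen quotient $\mathfrak F_{\xi_j} = \mathfrak M_1/\mathfrak J_{\xi_j}$ for each irrational $\xi_j$: by Theorem~\ref{theorem:apal-immune}(b) it is enough to show its word problem is recursively enumerable, and I will do this by a recursive many-one reduction to the word problem of $\mathfrak B$, whose r.e.\ status is the hypothesis. Since $\{\xi_1,\dots,\xi_k\}$ is finite, I fix rationals $a_j < \xi_j < b_j$ with $(a_j, b_j)$ disjoint from $\{\xi_l : l \neq j\}$, and consider the McNaughton hat $\chi_j(x) = \max(0, \min(x - a_j, b_j - x))$ vanishing outside $[a_j, b_j]$ with $\chi_j(\xi_j) > 0$. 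Fixing a formula coding $\chi_j$, for each pair $(\phi,\psi)$ I effectively build a formula $\alpha_{\phi,\psi}$ coding $|{\mathsf f}_\phi - {\mathsf f}_\psi| \wedge \chi_j$, using \eqref{equation:distance} together with the definability of $\wedge$ from $\oplus, {}^*$ in Theorem~\ref{theorem:multi}(iii).

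The claim is that $\alpha_{\phi,\psi}$ codes $0$ in $\mathfrak B$ iff $\phi \equiv \psi$ in $\mathfrak F_{\xi_j}$. Indeed, $\chi_j$ vanishes on an open neighborhood of every $\xi_l$ with $l \neq j$, so $\alpha_{\phi,\psi}$ does too, placing it automatically into $\mathfrak j_{\xi_l}$ and into any $\mathfrak j_{\xi_l}^\pm$ above $\mathfrak i$ for $l \neq j$. Hence $\alpha_{\phi,\psi} \in \mathfrak i$ iff $\alpha_{\phi,\psi} \in \mathfrak j_{\xi_j}$, and since $\chi_j(\xi_j) > 0$, this reduces to $|{\mathsf f}_\phi - {\mathsf f}_\psi|(\xi_j) = 0$, i.e., to equivalence of $\phi,\psi$ in $\mathfrak F_{\xi_j}$. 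This reduction yields r.e.\ (hence, by Theorem~\ref{theorem:apal-immune}(b), decidable) word problems for each Effros--Shen quotient. Combining with the polynomial-time decidable rational primitive quotients, the conjunction---which is the word problem of $\mathfrak B$---is decidable. The main obstacle I foresee is verifying that $\chi_j$ truly ``isolates'' $\mathfrak j_{\xi_j}$ from every other prime above $\mathfrak i$ (including the one-sided $\mathfrak j_{\xi_l}^\pm$ attached to rational $\xi_l$); this I will handle by exploiting that the support $[a_j, b_j]$ of $\chi_j$ avoids each other $\xi_l$, so $\chi_j$ vanishes on an open neighborhood of $\xi_l$ and therefore belongs to both $\mathfrak j_{\xi_l}^+$ and $\mathfrak j_{\xi_l}^-$.
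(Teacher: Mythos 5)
Your argument is correct in substance, and it supplies details the paper simply leaves as ``a routine refinement of the proof of the above theorem.'' The paper's intended route is evidently to re-run the proof of Theorem~\ref{theorem:apal-immune} locally at each of the finitely many maximal ideals (the Lindenbaum-algebra/decidability argument of Case~1 at the maximal ideals with infinite-dimensional quotient, the polytime germ computations at the finite-dimensional and one-sided primes). You instead keep Theorem~\ref{theorem:apal-immune} as a black box and add the one genuinely missing ingredient: since $\mathfrak i$ is the intersection of the finitely many primes above it (subdirect representation plus the classification of Theorem~\ref{theorem:robinson-apal}), the word problem of $\mathfrak B$ is the conjunction of the word problems of finitely many primitive quotients, and for each irrational $\xi_j$ you obtain the recursive enumerability required in Theorem~\ref{theorem:apal-immune}(b) by a computable many-one reduction $(\phi,\psi)\mapsto\alpha_{\phi,\psi}$, using a hat supported near $\xi_j$ to make membership in all the other primes automatic. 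This is a clean and legitimate way to transfer the r.e.\ hypothesis from $\mathfrak B$ down to each Effros--Shen factor, and the remaining factors are handled by parts (a) and (c) exactly as you say.

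Two small repairs are needed, both harmless. First, $\chi_j(x)=\max(0,\min(x-a_j,\,b_j-x))$ is not literally in $\McNone$ when $a_j,b_j$ are non-integer rationals, since its linear pieces do not have integer coefficients; replace it, e.g., by $\bigl(0\vee((qx-p)\wedge(r-sx))\bigr)\wedge 1$ with $a_j=p/q$, $b_j=r/s$, which vanishes exactly off $[a_j,b_j]$ and is positive at $\xi_j$ (the paper's own hats $b_k$ in Theorem~\ref{theorem:example} are of this kind); as the formula is fixed once and for all, bare existence via McNaughton's theorem suffices. Second, requiring only the open interval $(a_j,b_j)$ to miss the other $\xi_l$ is not quite enough: if some rational $\xi_l$ coincided with an endpoint, $\chi_j$ would lie in only one of the two one-sided primes $\mathfrak j_{\xi_l}^{\pm}$. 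Choose $a_j,b_j$ so that the closed interval $[a_j,b_j]$ avoids every $\xi_l$ with $l\neq j$ (as you in fact say in your final paragraph); then $\chi_j$ vanishes on a two-sided neighborhood of each such $\xi_l$, hence lies in $\mathfrak j_{\xi_l}$ and in both $\mathfrak j_{\xi_l}^{\pm}$, and your equivalence $\alpha_{\phi,\psi}\in\mathfrak i \Leftrightarrow |{\mathsf f}_\phi-{\mathsf f}_\psi|(\xi_j)=0$ goes through.
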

\begin{proof}
A routine refinement of the proof of the above theorem.
\end{proof}

\begin{theorem}
\label{theorem:example} A
 quotient $\mathfrak Q$ of $\mathfrak M_1$ with a 
G\"odel incomplete word problem can be explicitly constructed.
\end{theorem}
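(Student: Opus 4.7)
The plan is to construct $\mathfrak{Q}=\mathfrak{M}_1/\mathfrak{I}$ by coding a recursively enumerable but undecidable set $H\subseteq\mathbb{N}$ (say the halting set) into the ideal $\mathfrak{I}$. By Lemma \ref{lemma:lattice-quotient}(iii) combined with the isomorphism $\omega$ of \eqref{equation:omega}, this amounts to exhibiting a suitable ideal $\mathfrak{i}$ of $\McNone$ and setting $\mathfrak{I}=\mathfrak{id}(K_{0}$-counterpart of $\mathfrak{i})$. For each $n\geq 0$ put $x_n=1/(n+2)$ and choose a pairwise disjoint family of rational closed neighbourhoods $\overline{J_n}\subseteq\interval$ of $x_n$ (for instance $J_n=(x_n-\varepsilon_n,\,x_n+\varepsilon_n)$ with $\varepsilon_n=1/(4(n+2)(n+3))$). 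Let $g_n\in\McNone$ be the tent-shaped McNaughton function with peak value $1$ at $x_n$, integer-slope linear pieces on either side, and support equal to $\overline{J_n}$. Define
\[
\mathfrak{i}\;=\;\text{the ideal of $\McNone$ generated by $\{g_n:n\in H\}$.}
\]

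The key structural claim to establish is the following characterization: for every $f\in\McNone$,
\[
f\in\mathfrak{i}\;\Longleftrightarrow\;\mathrm{supp}(f)\subseteq\bigcup_{m\in F}\overline{J_m}\text{ for some finite }F\subseteq H.
\]
The direction ($\Rightarrow$) is immediate because any finite $\oplus$-combination $k\bigoplus_{i}g_{n_i}$ vanishes outside $\bigcup_{i}\overline{J_{n_i}}$. For ($\Leftarrow$), continuity forces $f$ to vanish on the boundary of each $J_m$, and the fact that the tents $g_m$ and the function $f$ are all piecewise linear with integer slopes provides an integer $k$ large enough that $k\bigoplus_{m\in F}g_m$ dominates $f$ pointwise. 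Because the closures $\overline{J_m}$ are pairwise disjoint and $g_n(x_n)=1$ while $g_m(x_n)=0$ for $m\neq n$, the characterization yields the crucial equivalence $g_n\in\mathfrak{i}\iff n\in H$.

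With the structural lemma in hand, the word problem of $\mathfrak{Q}$ is immediately recursively enumerable. Given formulas $\phi,\psi$, form the formula $\delta=((\phi^*\oplus\psi)^*\oplus(\psi^*\oplus\phi)^*)$, whose McNaughton function equals $|{\mathsf f}_\phi-{\mathsf f}_\psi|$ by \eqref{equation:distance}; its support decomposes effectively into finitely many rational closed intervals, and by disjointness of the $\overline{J_m}$ each such interval is contained in at most one $\overline{J_{m_i}}$ (computable from the endpoints). Checking ${\mathsf f}_\delta\in\mathfrak{i}$ then reduces to verifying that finitely many explicit integers $m_1,\dots,m_s$ all belong to $H$, a task that is r.e.\ since $H$ is r.e. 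For undecidability, use the effective one-variable case of McNaughton's theorem (as in the proof of Theorem \ref{theorem:generator}) to produce a computable sequence of formulas $\phi_n$ with ${\mathsf f}_{\phi_n}=g_n$; then $\phi_n$ and the constant formula $0$ denote the same element of $E(\mathfrak{Q})$ iff $g_n\in\mathfrak{i}$ iff $n\in H$, giving a many-one reduction from $H$ to the word problem of $\mathfrak{Q}$. Hence the word problem of $\mathfrak{Q}$ is r.e.\ and undecidable, that is, G\"odel incomplete.

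The main technical obstacle is the $(\Leftarrow)$ direction of the structural lemma, where the pairwise disjointness of the $\overline{J_m}$, the integer-slope piecewise-linear structure, and the MV-algebra comparison of tents must be combined to show that no finite $\oplus$-combination of $\{g_m:m\in H\setminus\{n\}\}$ can ever dominate $g_n$. This is precisely what forces $\mathfrak{Q}$ to be a non-primitive quotient of $\mathfrak{M}_1$, in accordance with Theorem \ref{theorem:apal-immune}, which rules out G\"odel incompleteness for primitive quotients.
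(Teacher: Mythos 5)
Your construction is correct, and it follows the same master plan as the paper's proof: fix a recursively enumerable undecidable set, attach to each of its members an explicitly computable ``bump'' in $\McNone$ located at a point of a computable sequence of rationals, let the ideal generated by these bumps be pulled back to an ideal of $\mathfrak M_1$ via $K_0$ and $\Gamma$, prove recursive enumerability of ideal membership, and get undecidability by evaluating at the peak, where all the other generators vanish. The genuine difference lies in how the r.e.\ half is obtained. The paper uses the set $\mathbb G$ of G\"odel numbers of first-order tautologies and hat functions $b_k$ of height $1/k$ with \emph{overlapping} supports, and proves that ideal membership is r.e.\ by rewriting ``${\mathsf f}_\chi\in\mathfrak g$'' as ``$\chi^*\oplus t.\gamma(t)$ is a tautology of \luk\ logic for some $t$'' and invoking the decidability of the \luk\ tautology problem. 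You instead take tents of height $1$ with pairwise \emph{disjoint} rational supports, which buys you a geometric characterization of the ideal ($f\in\mathfrak i$ iff $\mathrm{supp}(f)$ is covered by finitely many $\overline{J_m}$ with $m\in H$) and hence a direct semi-decision procedure: compute the support components of ${\mathsf f}_\delta$, read off the finitely many candidate indices, and enumerate $H$. This is more self-contained (no appeal to the decidability of the infinite-valued calculus), at the price of having to prove the domination lemma behind the characterization; the paper's route needs no disjointness or domination lemma and makes the link with \luk\ logic explicit. Two small points to tighten: your tents are elements of $\McNone$ only because with $\varepsilon_n=1/(4(n+2)(n+3))$ the slopes $\pm 4(n+2)(n+3)$ and the resulting intercepts are integers, which should be checked explicitly; and for the computable sequence $\phi_n$ note that McNaughton's theorem by itself only yields \emph{existence} of formulas, so justify effectiveness as the paper does, via \cite[Proposition 2.7]{aguz} or by writing each tent explicitly as a lattice combination of truncated linear functions, rather than attributing it to the proof of Theorem \ref{theorem:generator}, which is not an effective statement.
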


\begin{proof}
We  refer to
\cite{man, mon} for all unexplained notions and results
 of  first order
logic needed for the proof. By Corollary, \ref{corollary:apal-immune},
$\mathfrak Q$  necessarily has    infinitely many maximal ideals. 
A fortiori,  $\mathfrak Q$  is not a
primitive quotient of $\mathfrak M_1.$ 
Let  $\mathbb G$ be the set of
G\"odel numbers of
all first-order  tautologies  (= universally true sentences)
  in the first order language of
one  binary relation
 symbol  $\in$.
  \begin{figure} 
    \begin{center}                                     
    \includegraphics[height=8.5cm]{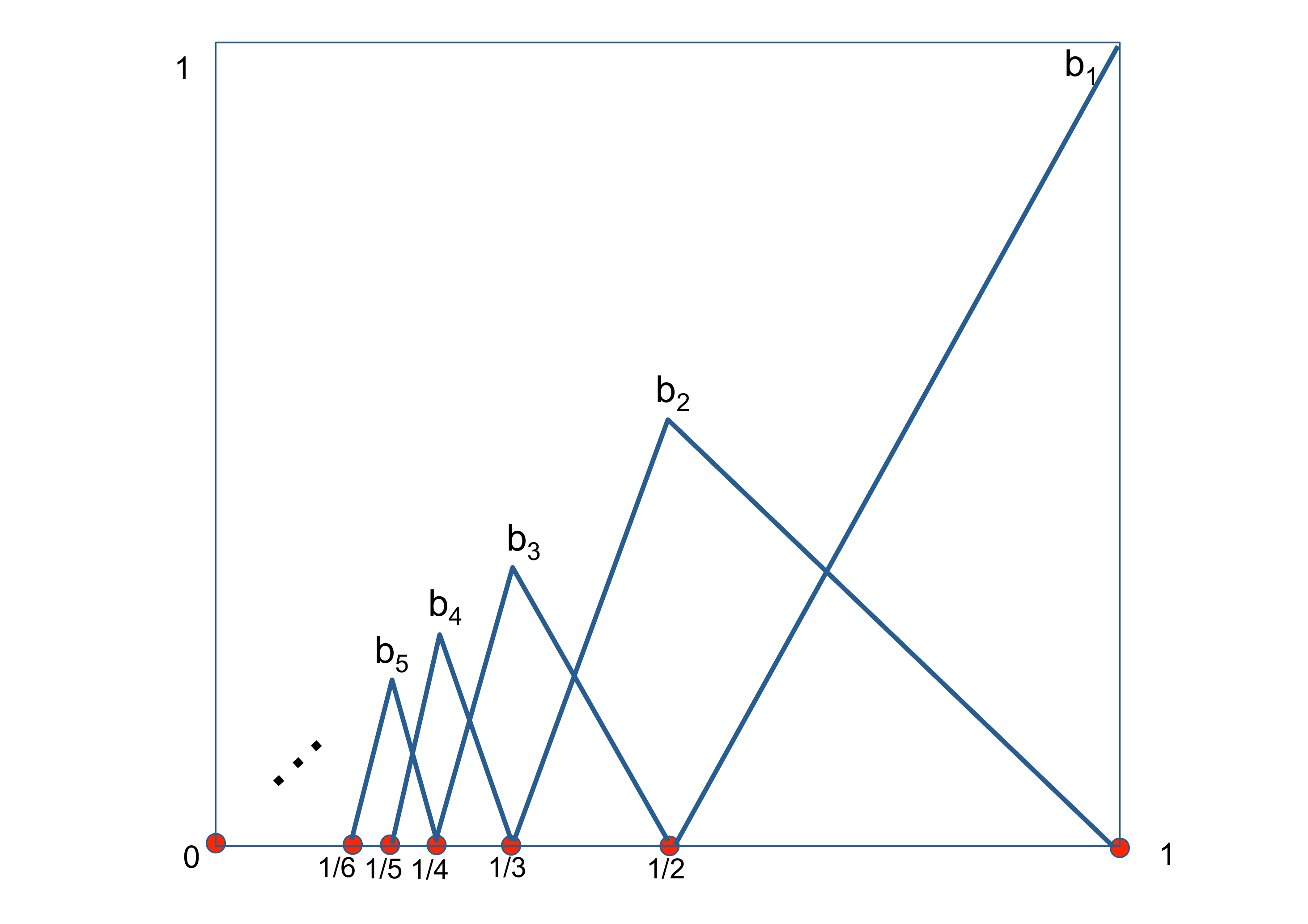}    
    \end{center}                                       
 \caption{\small The hat functions $b_k$ for $k=1,\dots,5$.
 Each linear piece of $b_k$ lies on a line
 whose equation has integer coefficients.}
    \label{figure:hats}                                                 
   \end{figure}
   We can safely assume   
 \begin{equation}
 \label{equation:01} 
0,1\notin \mathbb G. 
  \end{equation} 
On the one hand,    G\"odel's completeness theorem 
implies  that
  $\mathbb G$ is a recursively enumerable subset of
  $\{0,1,\dots\}.$
On the other  hand, 
G\"odel's incompleteness theorem
implies that $\mathbb G$  is  undecidable,
because there are undecidable finitely axiomatizable
 first-order theories
more powerful than Peano arithmetic
in the  language consisting of one binary relation symbol
$\in$.

We  define the  functions  
$b_0,b_1,\dots\colon \interval\to \interval$
as follows: 
$b_0$ is  the constant 0 function
over $\interval$. 
The graph of $b_1$ has two linear pieces, one
joining the origin with $(1/2,0)$, the other joining
$(1/2,0)$ with $(1,1).$
The graph of $b_2$ has three linear pieces, the first
joining the origin with $(1/3,0)$, the second joining
$(1/3,0)$ with $(1/2,1/2),$ and the third joining
$(1/2,1/2)$ with $(1,0).$
For each $k=3,4, \dots$,  the graph of
the hat-shaped function
$b_k $  has  four linear 
pieces: two pieces lie on the $x$-axis.
The left oblique piece joins the points
$(1/(k+1),0)$ and $(1/k,1/k)$. The right oblique piece joins the
points $(1/k,1/k)$ and $(1/(k-1),0)$.
Figure \ref{figure:hats} shows the hats $b_1,\dots,b_5$.

\medskip
\noindent
{\it Claim 1.}
There is a Turing machine which,
over input   $k=0,1,\dots,$  outputs a
 one-variable formula  $\beta(k)$ 
such that ${\mathsf f}_{\beta(k)}=b_k$.

\smallskip
If $k=0$, the formula $(X\oplus X^*)^*$
will do. For $k=1$, the formula  $\beta(1)=(X^*\oplus X^*)^*$
satisfies ${\mathsf f}_{\beta(1)}=b_1$. 
If $k=2,3,\dots$,  the  integer  $2\times 2$
matrices   with rows 
$(1, k+1)$ and $(1,k)$
is unimodular.
The same holds for the matrix 
with rows $(1,k)$ and $(1,(k-1))$. 
Since $b_k(1/k)=1/k,$ a  straightforward  verification shows that 
 the two oblique linear pieces of the graph of $b_k$ 
 are restrictions of linear polynomials with integer
coefficients. 
  McNaughton's  theorem \cite{mac} then 
merely ensures the {\it existence} of  $\beta(k)$.
To prove that  $\beta(k)$ is  {\it computable} from $k$,
in  \cite[Proposition 2.7]{aguz}  
an effective  method is given    to  compute
a formula  $\epsilon_{m,n}$  which,
for all integers  $m,n$,  codes the McNaughton
function  $e_{m,n}(x)=0\vee(1\wedge(mx+n)),\,\,\,x\in\interval.$ 
The desired formula $\beta(k)$ 
is now effectively computed as the pointwise
min of  two suitably chosen   formulas $\epsilon_{m,n}$,
recalling that,  by 
Theorem \ref{theorem:multi}(iii),
the   $\wedge$ operation  is definable
from the involutive monoidal operations
$^*$ and $\oplus$ of $\McNone$. Our first claim is settled.

\medskip
Next we fix, once and for all,    
a  recursive enumeration
$\boldsymbol{\eta}$  of $\mathbb G$.
In other words,  $\boldsymbol{\eta}\colon\{0,1,\dots\}
\to \{0,1,\dots\}$  is  recursively enumerable
and its range coincides with  $\mathbb G$.
Then
for  each $t\in \{0,1,\dots\}$ we define the formula $\gamma(t)$ by  
  \begin{equation}
 \label{equation:formulaenna}
 \gamma(t)
 =\beta({\boldsymbol{\eta}(0)})\vee
\beta({\boldsymbol{\eta}(1)})\vee\dots\vee
 \beta({{\boldsymbol{\eta}(t-1)}})
\vee \beta({{\boldsymbol{\eta}(t)}}).
  \end{equation}
Since the underlying lattice order
is definable in terms
of the $^*$ and $\oplus$ operations, and 
 $\mathbb G$ is recursively enumerable,
then so is the sequence  of formulas 
$ \gamma(0), \gamma(1),\dots$.

\smallskip
For each $t=0,1,\dots,$ let  the McNaughton function
  $g_t\in \McNone$ be defined  by 
 \begin{equation}
 \label{equation:gienne}
 g_t={\mathsf f}_{\gamma(t)}=
 {\mathsf f}_{\beta(\boldsymbol{\eta}(0))}\vee
 {\mathsf f}_{\beta(\boldsymbol{\eta}(1))}\vee\dots\vee
  {\mathsf f}_{\beta(\boldsymbol{\eta}(t-1))}
   \vee {\mathsf f}_{\beta(\boldsymbol{\eta}(t))}.
 \end{equation}
The  functions  $g_t$ form an increasing
sequence  $g_0\leq g_1\leq\dots $. 
 Let $\mathfrak g$ denote the
ideal of $\McNone$ generated by $\{g_0,g_1,\dots\}.$
For $n=1,2,\dots$ and $l\in \McNone$
let  us write
\begin{equation}
\label{equation:underbrace}
 n\,.\,l=
\underbrace{l\oplus\dots \oplus l}_{n {\rm \,\,summands}}.
\end{equation}
By definition of $\mathfrak g$, for
 any $f\in \McNone$ we then  have
$$
f \in \mathfrak g  \,\,\,\,\,\,\mbox{ iff }\,\,\,\,\,\,
 t. g_t\,\, \geq\,\, f\,\,\, \mbox{ for some } t=1,2,\dots,
 $$
because  the sequence $g_0,g_1,\dots$ is ascending.


\bigskip
\noindent
{\it Claim 2.}   The set  $\Omega$ of formulas  $\chi$  such that
${\mathsf f}_\chi$ belongs to  $\mathfrak g$
 is recursively enumerable.
 
 \smallskip
It is easy to verify that for  all $x,y\in \interval$,
\,\,$x\leq y$ iff $x^*\oplus y=1$.
A tedious but straightforward verification (\cite[p.9]{cigdotmun}) 
shows that
a  formula  $\chi$
satisfies 
 ${\mathsf f}_\chi\in \mathfrak g$
\,\,\, iff \,\,\,
$\chi^*\oplus  t.\gamma(t)$ is a tautology in \luk\ logic
for some $t$.
Since the set of tautologies in  \luk\  logic is decidable,
\cite[Theorem 4.6.10]{cigdotmun},
  \cite[18.3]{mun11},
using the recursive enumerability of the formulas
$\gamma(t)$  we can enumerate all tautologies 
 of the  form $\chi^*\oplus  t.\gamma(t)$, for some
 $t$ and $\chi$, and 
 extract from these tautologies the desired set $\Omega.$ 
Our second claim is settled.

\medskip
\noindent
{\it Claim 3.}   The set $\Omega$ 
 is undecidable.

\smallskip
By way of contradiction, suppose 
$\Omega$ is decidable.
For any  integer $k\geq 2$, 
we compute the  one-variable formula
$\beta(k)$ satisfying 
${\mathsf f}_{\beta(k)}=b_k$ as in 
the proof  of  Claim 1.  
If $k\in \mathbb G$ then
$b_k\leq g_k$,  whence $b_k\in \mathfrak g.$ 
Conversely, suppose 
 $k\notin \mathbb G$.
By \eqref{equation:gienne},
  every  $g_t$ vanishes at $1/k$,  but 
 $b_k(1/k)=1/k$. As a consequence, for no  $t=1,2,\dots$
 we may have   $t.g_t\geq b_k$,
 whence   $b_k\notin \mathfrak  g.$ 
 We have shown:
\begin{eqnarray*}
\beta(k)\in \Omega & \mbox{iff}&   b_k\in \mathfrak g\\
{}&  \mbox{iff}&  t. g_t \geq b_k \mbox{ for some } t\\
 {}&  \mbox{iff} & k\in \mathbb G,\,\,\, \mbox{(by definition of $b_k$ 
 and $g_t$).}
\end{eqnarray*}
By our absurdum hypothesis,
  membership of  $\beta(k)$  in $\Omega$
is decidable. It follows that
  for  $k= 2,3,\dots$  (whence for all $k$,
by  \eqref{equation:01})
we may effectively decide whether $k$ belongs to $\mathbb G,$
thus contradicting the undecidability of $\mathbb G$. 
Our third claim is settled.
 
\medskip
To conclude the proof of the theorem,  let
$\mathfrak G$ be the ideal of $\mathfrak M_1$  corresponding
to $\mathfrak g$ via the functors   $K_0$, 
and  $\Gamma$. 
Let the quotient AF$\ell$ algebra
$\mathfrak Q$ be defined by
$$\mathfrak Q=\mathfrak M_1/\mathfrak G.$$
By Lemma \ref{lemma:lattice-quotient}(iv),
 we can write 
$$
E(\mathfrak M_1/\mathfrak G)=  \McNone/\mathfrak g.
$$
As in   \eqref{equation:absolute},
for any two formulas
$\phi$ and $\psi$   in the language
$0,X,^*,\oplus$ of involutive monoids,    let
 $\delta$ be the formula
\begin{equation}
\label{equation:oftheform}
 ((\phi^*\oplus \psi)^*  \oplus (\psi^*\oplus \phi)^*).
\end{equation}
Then $\phi$ and $\psi$
 code the same element of
$\McNone/\mathfrak g$ 
 iff  $|{\mathsf f}_\phi - {\mathsf f}_\psi|$ belongs to
 $\mathfrak g$ iff ${\mathsf f}_\delta\in \mathfrak g.$
 From  Claim 2 it follows that
   the set  $\Delta$   of  pairs of formulas $(\phi,\psi)$ coding
 the same element of $\McNone/\mathfrak g$ 
 is recursively enumerable. 
  From  Claim 3 it  follows that
$\Delta$ 
 is  undecidable, because  formulas of the form
 \eqref{equation:oftheform} code
 all possible McNaughton functions  (letting
 $\psi=0$ in  \eqref{equation:oftheform}).

 In conclusion, the word problem of $\McNone/\mathfrak g$ 
 is G\"odel incomplete, whence so is 
  the word problem of the AF$\ell$ algebra $\mathfrak Q$.
\end{proof}

\begin{remark}
The map $ 
n\mapsto n  \mbox{th hexadecimal digit of }\pi
$
is Turing computable,  e.g., via the
Bailey-Borwein-Plouffe formula \cite{bbp},
$$
\pi = \sum_{k=0}^\infty\left[  
\frac{1}{16^k}\left(\frac{4}{8k+1} - \frac{2}{8k+4} -
\frac{ 1}{8k+5} - \frac{1}{8k+6}\right) \right].
$$
As a consequence, the rational left cut
of $\pi-3$ is decidable. As a matter of fact,
 there is a Turing machine
$\mathcal T$  which,
over input a pair of binary integers  $p,q$ with $q>p>0$
decides in a finite number of steps whether
$p/q<\pi-3.$ 
To this purpose,  $\mathcal T$ lists the hexadecimal digits
$h_1,h_2,\dots$  of
$p/q$ along with the hexadecimal digits
$k_1,k_2,\dots$   of $\pi-3$. Let $m$ be the smallest
integer such that  $h_{m}\not=k_m$. Then 
$p/q$ lies in the left cut of $\pi-3$ iff
$h_m<k_k.$

 A routine variant of the argument in the
proof of Corollary
\ref{corollary:polytime-ef-cut} now shows that {\it the
word problem of the Effros-Shen algebra 
$\mathfrak F_{\pi-3}$ is decidable.}
\end{remark} 

\begin{problem}  What is the computational complexity of
the word problem of $\mathfrak F_{\pi-3}$?
\end{problem}

\section{Concluding Remarks}
Further results on G\"odel incompleteness in the frame  of
Elliott's local semigroups  are proved in 
\cite[Example 6.4]{mun-jfa}  and  \cite[Theorem 2.3]{muntsi}.
Decidability and undecidability results
for  isomorphism problems of
AF$\ell$  algebras presented by Bratteli diagrams are
the subject matter of
 \cite{bratteli1}-\cite{bratteli2} and  \cite{mun-tams}.
 
Throughout this paper, 
the categorical equivalence
$\Gamma$ of \cite{mun-jfa} between unital $\ell$-groups
and MV-algebras
has been combined with  Grothendieck
$K_0$ functor (\cite{eff, goo-shiva}),  to investigate the
computational complexity of the word problem
of several AF$\ell$ algebras.
The natural deductive-algorithmic
 properties of the \luk\ sentential calculus
 (whose semantics is provided by  MV algebras,
 \cite{cha58,cha59})  have played
 a key role in the final section.
 
 \begin{table}[ht]
\begin{center}
\begin{tabular}{|r|l|}
\hline
{\sc from}  &  {\sc to} \\
\hline
\hline
 {\small Finitely presented MV-algebras}    &  {\small Rational polyhedra,  \cite{marspa}} \\  
        \hline
       {\small   Locally finite MV-algebras} &  
        {\small  Multisets, \cite{cigdubmun}} \\ 
                        \hline
 {\small  MV-algebras with the map
 $(x_1,x_2,\dots)\mapsto \sum_{k=1}^\infty x_k/2^k$
 }    &
 {\small  Compact  spaces, \cite{marreg}} \\  
                        \hline
\end{tabular}\\[0.2cm]
\end{center}
\caption{\small Three contravariant categorical equivalences for MV-algebras.}
\label{table:dualities}
\end{table}
Other (contravariant) categorical equivalences,
 also known as ``dualities'',
between classes of MV-algebras and various 
 classes of mathematical structures are 
 summarized in 
 Table \ref{table:dualities}, showing
 that the scope  of MV-algebras
goes  beyond \luk\ logic,
unital $\ell$ groups, and
AF$\ell$ algebras.  
 

\end{document}